\documentclass[11pt,a4paper]{article}
\usepackage[a4paper]{geometry}
\usepackage{amssymb,latexsym,amsmath,amsfonts,amsthm}
\usepackage{graphicx}

\usepackage{epsfig}
\usepackage{comment}
\usepackage{subfigure}
\usepackage{graphicx,ebezier}
\usepackage{overpic}

\DeclareMathOperator{\diag}{diag} 

\newcommand{\er}{\mathbb{R}}
\newcommand{\cee}{\mathbb{C}}

\newcommand{\zet}{\mathbb{Z}}

\newcommand{\Lam}{\Lambda}

\newcommand{\bol}{\hfill\square\\}
\newcommand{\til}{\tilde}

\newcommand{\vecv}{\mathbf{v}}
\newcommand{\vecw}{\mathbf{w}}

\newcommand{\tr}{\mathrm{Tr}}

\newcommand{\ep}{\epsilon}
\newcommand{\ud}{\mathrm{d}}

\newtheorem{theorem}{Theorem}[section]
\newtheorem{lemma}[theorem]{Lemma}
\newtheorem{proposition}[theorem]{Proposition}

\newtheorem{corollary}[theorem]{Corollary}

\theoremstyle{definition}

\theoremstyle{remark}

\numberwithin{equation}{section}

\hyphenation{pa-ra-me-tri-za-tion}

\title{Zeros and ratio asymptotics for matrix orthogonal polynomials}

\author{Steven Delvaux\footnotemark[1],\quad Holger Dette\footnotemark[2]}

\date{}

\begin{document}
\maketitle
\renewcommand{\thefootnote}{\fnsymbol{footnote}}
\footnotetext[1]{Department of Mathematics, University of Leuven (KU Leuven),
Celestijnenlaan 200B, B-3001 Leuven, Belgium. email:
steven.delvaux\symbol{'100}wis.kuleuven.be. } \footnotetext[2]{Department of
Mathematics, Ruhr-Universit\"at Bochum, 44780 Bochum, Germany. e-mail:
holger.dette\symbol{'100}rub.de. }

\begin{abstract}
Ratio asymptotics for matrix orthogonal polynomials with recurrence
coefficients $A_n$ and $B_n$ having limits $A$ and $B$ respectively (the matrix
Nevai class) were obtained by Dur\'an. In the present paper we obtain an
alternative description of the limiting ratio. We generalize it to recurrence
coefficients which are asymptotically periodic with higher periodicity, and/or
which are slowly varying in function of a parameter. Under such assumptions, we
also find the limiting zero distribution of the matrix orthogonal polynomials,
generalizing results by Dur\'an-L\'opez-Saff and Dette-Reuther to the
non-Hermitian case. Our proofs are based on \lq normal family\rq\ arguments and
on the solution to a quadratic eigenvalue problem. As an application of our
results we obtain new explicit formulas for {the spectral measures of} the
matrix Chebyshev polynomials of the first and second kind, and we derive the
asymptotic eigenvalue distribution for a class of random band matrices
generalizing the tridiagonal matrices {introduced by} Dumitriu-Edelman.
%Finally we
%extend our results for recurrence coefficients with higher periodicity, or with
%slow periodic variation.

\textbf{Keywords}: matrix orthogonal polynomial, (block) Jacobi matrix,
recurrence coefficient, (locally) block Toeplitz matrix, ratio asymptotics,
limiting zero distribution, quadratic eigenvalue problem, normal family, matrix
Chebyshev polynomial, random band matrix.

\end{abstract}

\setcounter{tocdepth}{1} \tableofcontents

\section{Introduction}
\label{section:intro}

Let $(P_n(x))_{n=0}^{\infty}$ be a sequence of matrix-valued polynomials of
size $r\times r$ ($r\geq 1$) generated by the recurrence relation
\begin{equation}\label{recurrence}
xP_n(x) = A_{n+1} P_{n+1}(x)+B_n P_n(x)+A_{n}^* P_{n-1}(x),\qquad n\geq 0,
\end{equation}
with initial conditions $P_{-1}(x)\equiv 0 \in \mathbb{C}^{r \times r}$ and $P_0(x)\equiv I_r,$ with $I_r$
the identity matrix of size $r$. The coefficients $A_k$ and $B_k$ are complex matrices
of size $r\times r$. We assume that each matrix $A_k$ is nonsingular and $B_k$ is
Hermitian.
%, i.e., $\det A_k\neq 0$ and $B_k = B_k^*$ for all $k$.
The star superscript denotes the Hermitian conjugation.

The polynomials generated by \eqref{recurrence} satisfy orthogonality relations
with respect to a matrix-valued measure (spectral measure) on the real line
(Favard's theorem \cite{DPS}) and they are therefore called \emph{matrix
orthogonal polynomials}. The study of such polynomials goes back at least to
\cite{Krein} and we refer to the survey paper \cite{DPS} for a detailed
discussion of the available literature and for many more references.
%Matrix orthogonal polynomials can sometimes be used to solve problems on scalar
%orthogonal polynomials, see e.g.\ \cite{DKS}.
Some recent developments and applications of matrix orthogonal polynomials can
be found in \cite{BCD,CGMV,DKS,GIM} among many others.

To the recurrence \eqref{recurrence} we associate the \emph{block Jacobi
matrix}
\begin{equation}\label{block:Jacobi}
J_n = \begin{pmatrix} B_0 & A_1 & & & 0 \\
A_1^* & B_1 & A_2 \\
& A_2^* & \ddots & \ddots \\
& & \ddots & \ddots & A_{n-1}\\
0 & & & A_{n-1}^* & B_{n-1}
\end{pmatrix}_{rn\times rn},
\end{equation}
which is a Hermitian, block tridiagonal matrix. It is well-known that
%$P_n(x)$
%is essentially the characteristic polynomial of $J_n$, in the sense that
$$\det P_n(x) = c_n\det(xI_n-J_n)$$ with $c_n=\det(A_n^{-1}\cdots A_2^{-1}A_1^{-1})\neq 0$, see \cite{DPS,DL},
and with the \emph{zeros} of the matrix polynomial
$P_n(x)$ we mean the zeros of the determinant $\det
P_n(x)$, or equivalently the eigenvalues of the matrix $J_n$ defined
in \eqref{block:Jacobi} (counting multiplicities).

The polynomials $(P_n(x))_{n=0}^{\infty}$ are said to belong to the
\emph{matrix Nevai class} if the limits
\begin{equation}\label{Nevai:class}
\lim_{n\to\infty} A_n = A,\qquad \lim_{n\to\infty} B_n = B,
\end{equation}
exist, where we will assume throughout this paper that $A$ is nonsingular.

%The study of ratio asymptotics for orthogonal polynomials has a long history.
One of the famous classical results on orthogonal polynomials is Rakhmanov's
theorem \cite{Rak}, see \cite{AptLopRoc,DKS,Simon10} for a survey of the recent
advances in this direction. Rakhmanov's theorem for matrix orthogonal
polynomials on the real line is discussed in \cite{DKS,YM}. These results give
a sufficient condition on the spectral measure of the matrix orthogonal
polynomials, in order to have recurrence coefficients in the matrix Nevai
class, with limiting values $A=I_r$ and $B=0$. For a discussion of the matrix
Nevai class in this case, see \cite{Koz}.

Dur\'an \cite{Duran} shows that in the matrix Nevai class \eqref{Nevai:class},
the limiting matrix ratio
\begin{equation}\label{ratio}
R(x):=\lim_{n\to\infty} P_{n}(x)P_{n+1}^{-1}(x),\qquad x\in\cee\setminus
[-M,M],\end{equation} exists and depends analytically on $x\in\cee\setminus
[-M,M]$. Here $M>0$ is a constant such that all the zeros of all the matrix
polynomials $ P_n(x)$ are in $[-M,M]$. Moreover, it is also proved in
\cite{Duran} that $R(x)A^{-1}$ is the Stieltjes transform of the spectral
measure for the matrix Chebyshev polynomials of the second kind generated by
the constant recurrence coefficients $A$ and $B$; {see
Section~\ref{section:Cheb}}.
%Formula
%$$ A^* FAF+(B-xI)F+I = 0,\qquad x\in\cee\setminus [-M,M].
%$$

The present paper has several purposes. First we will give a different
formulation of Dur\'an's result on ratio asymptotics \cite{Duran}. In
particular we will give a self-contained proof of the existence of the limiting
ratio $R(x)$ and express it in terms of a quadratic eigenvalue problem. Second,
our approach can also be used to obtain ratio asymptotics for some extensions
to the matrix Nevai class. More precisely, we will allow the coefficients $A_n$
and $B_n$ to be slowly varying in function of a parameter, or to be
asymptotically periodic with higher periodicity. In both cases we prove the
existence of the limiting ratio (the limit may be {local or periodic})
and give an explicit formula for it.

To prove these results  we will work with \emph{normal families} in the sense
of Montel's theorem. Similar arguments can be found at various places in the
literature and our approach will be based in particular on the work by
Kuijlaars-Van Assche \cite{KVA} and its further developments in
\cite{BDK,CCV,Dette,Roman}. We point out that an alternative approach for
obtaining ratio asymptotics, is to use the \emph{generalized Poincar\'e
theorem} (see \cite{MN,Simon2} for this theorem); but the normal family
argument has the advantage that it also works for slowly varying recurrence
coefficients; see Section~\ref{subsection:N} for more details.
%We now survey some known results for the matrix Nevai class and describe the
%plan of the paper.
%\smallskip\textbf{Ratio asymptotics.}
\newline\newline
The third purpose of the paper is to obtain
 a new  description of the limiting zero
distribution of the matrix polynomials $(P_n(x))_{n=0}^\infty$.
Dur\'an-L\'opez-Saff \cite{DLS} showed that in the matrix Nevai class
\eqref{Nevai:class}, and assuming that $A$ is Hermitian, then the zero
distribution of $P_n(x)$ has a limit for $n\to\infty$ in the sense of the weak
convergence of measures. They expressed the limiting zero distribution of
$P_n(x)$ in terms of the spectral measure for the matrix Chebyshev polynomials
of the first kind, see Section~\ref{section:DLS} for the details. In contrast
to the work of \cite{DLS} the results derived in the present paper are also
applicable in the case when the matrix $A$ is nonsingular but not necessarily
Hermitian.  Maybe not surprisingly, the limiting eigenvalue distribution of the
matrix $J_n$ in \eqref{block:Jacobi}--\eqref{Nevai:class} is the same as the
limiting eigenvalue distribution for $n\to\infty$ of the \emph{block Toeplitz
matrix}
\begin{equation}\label{block:Toeplitz}
T_n = \begin{pmatrix} B & A &&& 0\\
A^* & B & A \\
& A^* & \ddots & \ddots \\
& & \ddots & \ddots & A\\
0& & & A^* & B
\end{pmatrix}_{rn\times rn}.
\end{equation}
The {eigenvalue counting measure} of the matrix $T_n$ has a weak limit
for $n\to\infty$ \cite{Widom1} and a description of the limiting measure can be
obtained from \cite{BS2,Delvaux,Widom1}.

In this paper we establish the limiting zero distribution of the matrix
polynomials $P_n(x)$ as a consequence of our results on ratio asymptotics.
{We also find} the limiting zero distribution for the previously
mentioned extensions to the matrix Nevai class. {That is}, the
coefficients $A_n$ and $B_n$ are again allowed to be slowly varying in function
of a parameter, or to be asymptotically periodic with higher periodicity.

Incidentally, we mention that it is possible to devise an alternative, linear
algebra theoretic proof for the fact that the matrices $J_n$ and $T_n$ have the
same weak limiting eigenvalue distribution, using the fact that the block
Jacobi matrix \eqref{block:Jacobi} is Hermitian \cite{KSS}; however our
approach has the advantage that it can be also used in the non-Hermitian case,
at least in principle. This means that most of the  methodology derived in this
paper is also  applicable in the case when  the recurrence matrix
\eqref{block:Jacobi} generating the polynomials $P_n(x)$, is no longer
Hermitian, or when it has a larger band width in its block lower triangular
part, as in \cite{BDK}. In fact the key places where we use the Hermiticity are
in the proof of  Proposition~\ref{prop:Gamma0} and in the proof of
Lemma~\ref{lemma:DetteReuther}, see \cite{Dette}; but it is reasonable to
expect that both facts remain true for some specific non-Hermitian cases as
well. This may be an interesting topic for further research.
\\ \\
The remaining part of this paper is organized as follows. In the next section
we state our results for the case of the matrix Nevai class. In
Section~\ref{section:beyondNevai} we generalize these findings, to the context
of recurrence coefficients with slowly varying or asymptotically periodic
behavior. In Section~\ref{section:Cheb} we apply our results to find new
formulas for {the spectral measures of} the matrix Chebyshev polynomials
of the first and second kind. In Section~\ref{section:DLS} we relate our
formula for the limiting zero distribution to the one of Dur\'an-L\'opez-Saff
\cite{DLS}. In Section \ref{random} we indicate some potential applications in
the context of random matrices. In particular we derive the limiting eigenvalue
distribution for a class of random band matrices, which generalize the random
tridiagonal representations of the $\beta$-ensembles, {which were
introduced in} \cite{DumEdel}. Finally, Section~\ref{section:proofs} contains
the proofs of the main results.

\section{Statement of results: the matrix Nevai class}
\label{section:statement}

\subsection{The quadratic eigenvalue problem}

Throughout this section we work in the matrix Nevai class
\eqref{block:Jacobi}--\eqref{Nevai:class}. Observe that the limiting ratio
$R(x)$ in \eqref{ratio} satisfies the matrix relation
\begin{equation}\label{R:matrix:relation}
A^* R(x)+B-xI_r+AR^{-1}(x) = 0,\qquad x\in\cee\setminus [-M,M],
\end{equation}
which  is an easy consequence of the three term recurrence \eqref{recurrence}.
For a simple  motivation of our approach we
 assume at the moment that for each $x\in\cee\setminus[-M,M]$, the matrix
$R(x)$ is diagonalizable with distinct eigenvalues $z_k=z_k(x)$,
$k=1,\ldots,r$. Let $\vecv_k(x)\in\cee^r$ be the corresponding eigenvectors so
that
$$R(x)\vecv_k(x) = z_k(x)\vecv_k(x),$$ for $k=1,\ldots,r$. Multiplying \eqref{R:matrix:relation} on
the right with $\vecv_k(x)$ we find the relation
\begin{equation}\label{vector:v} \left(z_k(x) A^*+B-xI_r+z_k^{-1}(x)A\right)\vecv_k(x) = \mathbf{0},
\end{equation} where $\mathbf{0}$ denotes a column vector with all its entries equal to zero.
This relation implies in particular that
\begin{equation}\label{algebraic:equation:0} \det\left(z_k(x)A^*+B-xI_r+z_k^{-1}(x)A\right)=0.
\end{equation}
We can view \eqref{algebraic:equation:0} as a \emph{quadratic eigenvalue
problem} in the variable $z=z_k(x)$, and it gives us an algebraic equation for
the eigenvalues of the limiting ratio  $R(x)$. The corresponding eigenvectors $\vecv_k(x)$ can then
be found from \eqref{vector:v}. Note that the equation
\eqref{algebraic:equation:0} has $2r$ roots $z=z_k(x)$, $k=1,\ldots,2r$.
Ordering these roots by increasing modulus
\begin{equation}\label{rootszk:ordering}
0<|z_1(x)|\leq |z_2(x)|\leq\ldots\leq |z_r(x)|\leq |z_{r+1}(x)|\leq\ldots\leq
|z_{2r}(x)|,
\end{equation}
then we will see below that the eigenvalues of $R(x)$ are precisely the $r$
smallest modulus roots $z_1(x),\ldots,z_r(x)$.
%We will also show that $R(x)$ is
%diagonalizable except for finitely many $x\in\cee\setminus[-M,M]$.
%The previous paragraphs suggest an alternative approach to \cite{Duran}.
%These ideas will be stated rigorously in Theorem~\ref{theorem:ratioasy} below.
%This theorem could be proved by taking for granted the existence of the
%limiting ratio $R(x)$, which was in fact established by Dur\'an \cite{Duran}.
%However, we will give a self-contained proof that can also be used to obtain
%ratio asymptotics for some extensions to the matrix Nevai class. More
%precisely, we will allow the coefficients $A_n$ and $B_n$ to be slowly varying
%in function of a parameter, or to be asymptotically periodic with higher
%periodicity. The proofs will use \lq normal family\rq\ arguments generalizing
%earlier work by Kuijlaars-Van Assche \cite{KVA} and further developments in
%\cite{BDK,CCV,Dette,Roman}.

In order to treat the general case of eigenvalues with multiplicity larger than
$1$ we now proceed in a more formal way. Inspired by the above discussion, we
define the algebraic equation
\begin{equation}\label{algebraic:equation}
0 = f(z,x) := \det(zA^*+B+z^{-1}A-x I_r),
\end{equation}
where $z$ and $x$ denote two complex variables. As mentioned, one may consider
\eqref{algebraic:equation} to be a (usual) eigenvalue problem in the variable
$x$ and a quadratic eigenvalue problem in the variable $z$.

Expanding the determinant in \eqref{algebraic:equation}, we can write it as a
Laurent polynomial in $z$:
\begin{equation}\label{algebraic:equation:Laurent} f(z,x)= \sum_{k=-r}^r f_k(x)z^k,
\end{equation}
where the coefficients $f_k(x)$ are polynomials in $x$ of degree at most $r$,
and with the outermost coefficients $f_r(x)$ and $f_{-r}(x)$ given by
$$ f_{r}(x)\equiv f_{r} = \det A^*,\qquad f_{-r}(x)\equiv f_{-r} = \det A.
$$
%and moreover
%$$ \deg f_k=\left\{\begin{array}{ll}
% r, & \textrm{if }k=0, \\
%\leq r-1, & \textrm{if }k\neq 0.\end{array}\right.
%$$

%We can expand $f(z,x)$ as a Laurent polynomial in $z$:
%\begin{equation}\label{algebraic:equation:Laurent}
%f(z,x) = \sum_{k=-r}^r f_k(x)z^k,
%\end{equation}
%where the coefficients $f_k(x)$ are certain polynomials in $x$ (obtained by
%expanding the determinant in \eqref{algebraic:equation} and collecting the
%terms with the same power of $z$). The extremal coefficients $f_{-r}(x)$ and
%$f_r(x)$ are given by
%$$ f_{-r}(x)\equiv f_{-r}\equiv \det A,\qquad f_{r}(x)\equiv f_{r}\equiv\det A^*.
%$$
%In particular they are nonzero constants.

Solving the algebraic equation $f(z,x)=0$ for $z$ yields $2r$ roots (counting
multiplicities) $z_k = z_k(x)$ which we order by increasing modulus as in
\eqref{rootszk:ordering}.
%\begin{equation}\label{rootszk:ordering}
%0<|z_1(x)|\leq |z_2(x)|\leq\ldots\leq |z_r(x)|\leq |z_{r+1}(x)|\leq\ldots\leq
%|z_{2r}(x)|.
%\end{equation}
If $x\in\cee$ is such that two or more roots $z_k(x)$ have the same modulus
then we may arbitrarily label them such that \eqref{rootszk:ordering} holds.

Define
\begin{equation}\label{Gamma0}
\Gamma_0 := \{x\in\cee\mid |z_r(x)|= |z_{r+1}(x)|\}.
\end{equation}
It turns out that the set $\Gamma_0$ attracts the eigenvalues of the block
Toeplitz matrix $T_n$ in \eqref{block:Toeplitz} for $n\to\infty$, see
\cite{BS2,Delvaux,Widom1}. This set will also attract the eigenvalues of the
matrix $J_n$ in \eqref{block:Jacobi}. The structure of $\Gamma_0$ is given in
the next proposition, which  is proved in
Section~\ref{subsection:proof:Gamma0}.
%, which we
%state without proof.

\begin{proposition} %(Structure of $\Gamma_0$.)
\label{prop:Gamma0} $\Gamma_0$ is a subset of the real line. It is compact and
can be written as the disjoint union of at most $r$ intervals.
%Moreover we have
%$$\Gamma_0 = \{x\in\cee\mid \exists \textrm{$z$ with $f(z,x)=0$ and $|z|=1$}
%\}\subset\er,$$ i.e., $\Gamma_0$ is the set of all points $x$ for which
%$f(z,x)=0$ has a root with unit modulus.
\end{proposition}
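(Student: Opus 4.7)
The plan is to reduce everything to the Hermitian matrix-valued symbol
\[
H(\theta)\;:=\;e^{i\theta}A^*+B+e^{-i\theta}A,\qquad \theta\in[0,2\pi],
\]
(which is Hermitian because $B$ is Hermitian and $(e^{i\theta}A^*)^*=e^{-i\theta}A$) and to prove
\[
\Gamma_0\;=\;\Sigma\;:=\;\bigcup_{\theta\in[0,2\pi]}\sigma\bigl(H(\theta)\bigr).
\]
The first step is a conjugation symmetry of the defining equation: from $(zA^*+B+z^{-1}A-xI_r)^*=\bar z A+B+\bar z^{-1}A^*-\bar x I_r$ one gets $\overline{f(z,x)}=f(1/\bar z,\bar x)$, so $z$ is a root of $f(\cdot,x)$ iff $1/\bar z$ is a root of $f(\cdot,\bar x)$. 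For $x\in\mathbb{R}$ this makes the multiset of roots invariant under $z\mapsto 1/\bar z$, hence the ordered moduli satisfy $|z_k(x)|\,|z_{2r+1-k}(x)|=1$. In particular $|z_r(x)|\le 1\le|z_{r+1}(x)|$, with equality throughout iff $f(\cdot,x)$ has a root on the unit circle iff $x\in\sigma(H(\theta))$ for some $\theta$. This proves $\Gamma_0\cap\mathbb{R}=\Sigma$.

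Next I would exclude non-real points from $\Gamma_0$. If $x\notin\mathbb{R}$ then $f(\cdot,x)$ has no root on the unit circle, since a root $z=e^{i\theta}$ would force $x\in\sigma(H(\theta))\subset\mathbb{R}$. Consequently, as $x$ varies through the open upper half-plane, no root of $f(\cdot,x)$ crosses $|z|=1$, so the number of roots inside $\{|z|<1\}$ is constant on that half-plane. I would evaluate this number at $x=iT$, $T\to\infty$: writing
\[
z^rf(z,iT)=\det\bigl((-izT)I_r+z^2A^*+zB+A\bigr)=(-izT)^r\det\!\bigl(I_r+(izT)^{-1}(z^2A^*+zB+A)\bigr),
\]
the second factor tends to $1$ uniformly on $|z|=1$, so $T^{-r}z^rf(z,iT)\to(-iz)^r$ uniformly there, and Rouché gives exactly $r$ roots in $|z|<1$ for $T$ large. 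The analogous argument handles the lower half-plane. Therefore $|z_r(x)|<1<|z_{r+1}(x)|$ for every non-real $x$, which combined with the first step yields $\Gamma_0=\Sigma\subset\mathbb{R}$.

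The remaining claims are now corollaries. $\Sigma$ is bounded because $\|H(\theta)\|\le 2\|A\|+\|B\|$ for all $\theta$, and $\Gamma_0$ is closed because the ordered moduli $|z_k(x)|$ depend continuously on $x$; hence $\Gamma_0$ is compact. For the interval structure, let $\lambda_1(\theta)\le\cdots\le\lambda_r(\theta)$ be the ordered eigenvalues of $H(\theta)$. Each $\lambda_k$ is a continuous $2\pi$-periodic function of $\theta$ by standard Hermitian perturbation theory, so $E_k:=\lambda_k([0,2\pi])$ is a compact connected subset of $\mathbb{R}$, i.e.\ a closed interval. Therefore $\Gamma_0=\bigcup_{k=1}^{r}E_k$ is a union of $r$ closed intervals, which decomposes as a disjoint union of at most $r$ closed intervals. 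I expect the inclusion $\Gamma_0\subset\mathbb{R}$ to be the main obstacle, since this is where the Hermiticity of $B$ genuinely enters (through the real spectrum of $H(\theta)$ and the Rouché/homotopy count); the remaining statements then follow essentially automatically.
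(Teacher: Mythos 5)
Your argument is correct and follows the same core strategy as the paper for the reality and compactness of $\Gamma_0$: identify the unit-circle condition $|z|=1$ with $x$ lying in the spectrum of the Hermitian symbol $e^{i\theta}A^*+B+e^{-i\theta}A$, use the reciprocal symmetry of roots over $\bar x$, and then a winding/continuity argument (you use Rouch\'e at $x=iT$ where the paper uses the $x\to\infty$ asymptotics of the roots) to pin down exactly $r$ roots inside the unit disk off the real axis. Where you genuinely depart from the paper is the interval count: the paper simply cites \cite{Delvaux,Widom1} for the statement that $\Gamma_0$ is a disjoint union of at most $r$ intervals, whereas you derive it directly by writing $\Gamma_0=\bigcup_{k=1}^r\lambda_k([0,2\pi])$ with $\lambda_1(\theta)\le\cdots\le\lambda_r(\theta)$ the ordered eigenvalues of $H(\theta)$, observing that each $\lambda_k$ is continuous on the compact connected set $[0,2\pi]$ so its image is a closed interval, and that a union of $r$ intervals has at most $r$ connected components. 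This makes the proposition self-contained. You also prove en route the identity $\Gamma_0=\bigcup_\theta\sigma(H(\theta))$, which is precisely the content of Proposition~\ref{prop:Gamma0:bis} in the paper; combining the two into one argument is a reasonable economy, though it does mean your proof is doing slightly more than the proposition asks. One stylistic remark: the paper obtains the $r$ roots inside the disk from $z_k(x)\to 0$ for $k\le r$ as $x\to\infty$, which is a single argument valid as $|x|\to\infty$ in any direction and simultaneously proves compactness; your separate Rouch\'e computation at $x=iT$ and boundedness of $\Sigma$ by $2\|A\|+\|B\|$ accomplish the same thing, just in two steps.
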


\subsection{Ratio asymptotics}
\label{subsection:ratioasy}

%Ratio asymptotics for the matrix Nevai class were obtained by Dur\' an
%\cite{Duran}. We now give an alternative description of his results.

%By definition,
For any $x\in\cee$ and any root $z=z_k(x)$ of the quadratic eigenvalue problem
\eqref{algebraic:equation},
%and hence that
%$$ \det(A^*z_k(x)+B+Az_k^{-1}(x)-x I_r)=0.
%$$
we can choose a column vector $\vecv_k(x) \in \mathbb{C}^r$ in the right null
space of the matrix $z_k(x)A^*+B-xI_r+z_k^{-1}(x)A$, see \eqref{vector:v}.
%in the right null
%space of this matrix:
%\begin{equation}\label{vector:v}
%(A^*z_k(x)+B+Az_k^{-1}(x)-x I_r)\vecv_k(x) = \mathbf{0}.
%\end{equation}
If the null space is one-dimensional then we can uniquely fix the vector
$\vecv_k(x)$ by requiring it to have unit norm and a first nonzero component
which is positive.

If the null space is $2$- or more dimensional, then the vector $\vecv_k(x)$ is
not uniquely determined from \eqref{vector:v}. We need some terminology.
{(The following  paragraphs are a bit more technical and the reader may wish
to move directly to Theorem~\ref{theorem:ratioasy}.)} For any $x,z\in\cee$
define $d(z,x)$ as the geometric dimension of the null space in
\eqref{vector:v}, i.e.,
\begin{equation}\label{mult:geometric} d(z,x) := \dim\{ \vecv\in\cee^r\mid (A^*z+B+Az^{-1}-x
I_r)\vecv=\mathbf{0}\}.
\end{equation}
Also define the algebraic multiplicities
%the multiplicity of $z-z_k(x)$ as
%a factor of the Laurent polynomial $f(z,x):=\det(A^*z(x)+B+Az^{-1}(x)-x I_r)$:
\begin{eqnarray} \label{mult:algebraic:z} m_{1}(z,x) &:=& \max\{ k\in\zet_{\geq 0}\mid (Z-z)^k\ \mathrm{ divides }\ Z^r f(Z,x)\},
\\ \label{mult:algebraic:x} m_2(z,x)&:=& \max\{ k\in\zet_{\geq 0}\mid (X-x)^k\ \mathrm{ divides }\ f(z,X)\},
\end{eqnarray}
where $Z$ and $X$ are auxiliary variables and the division is understood in the
ring of polynomials in $Z$ and $X$ respectively.
%Thus $m_1(z,x)$ is the
%multiplicity of the $Z$-polynomial $Z-z$ as a factor of $Z^r f(Z,x)$ and
%similarly $m_2(z,x)$ is the multiplicity of the $X$-polynomial $X-x$ as a
%factor of $f(z,X)$.

%\begin{remark}
In the spirit of linear algebra, we can think of $d(z,x)$ as the
\emph{geometric multiplicity} of $(z,x)\in\cee^2$ while $m_1(z,x)$ and
$m_2(z,x)$ are the \emph{algebraic multiplicities} of $(z,x)$ with respect to
the variables $z$ and $x$ respectively.
%\end{remark}

The next lemma is a generalization of a result of Dur\'an
\cite[Lemma~2.2]{Duran}.

\begin{lemma}\label{lemma:alggeomult}
(Algebraic and geometric multiplicities:) Let $A,B\in\cee^{r\times r}$ be
matrices with $A$ non-singular and $B$ Hermitian. Define $f(z,x)$ by
\eqref{algebraic:equation}. Then the algebraic and geometric multiplicities in
\eqref{mult:geometric}--\eqref{mult:algebraic:x} are the same:
$$ d(z,x) = m_1(z,x) = m_2(z,x),
$$
{for all but finitely many $z,x\in\cee$}.
\end{lemma}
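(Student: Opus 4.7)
The plan is to analyze the spectral curve $C := \{F(z,x)=0\}$, where $F(z,x) := z^r f(z,x) \in \cee[z,x]$, since off $C$ trivially $d = m_1 = m_2 = 0$. Two preparatory observations: (i) $F(0,x) = \det A \neq 0$ excludes a nonconstant $z$-only factor of $F$, and (ii) the leading coefficient of $F$ in $z$ is the nonzero constant $\det A^*$, which excludes an $x$-only factor. Writing the irreducible factorization $F = \prod_{i=1}^p F_i^{e_i}$, each $F_i$ inherits these properties, so on each component $\{F_i = 0\}$ both $\partial_z F_i$ and $\partial_x F_i$ are non-vanishing outside a finite exceptional set. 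At a point $(z_0,x_0) \in \{F_i = 0\}$ outside that set and not lying on any other component, a direct local computation from $F = u \cdot F_i^{e_i}$ with $u(z_0,x_0) \neq 0$ then yields $m_1(z_0,x_0) = m_2(z_0,x_0) = e_i$.

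It remains to show $d(z_0,x_0) = e_i$ at all but finitely many points of each $\{F_i = 0\}$; here the Hermiticity of $B$ is essential. A direct calculation gives $L(z)^* = L(z)$ on the unit circle $|z|=1$ (where $L(z) := zA^* + B + z^{-1}A$), so $L(z)$ is Hermitian and hence diagonalizable there, yielding $d(z,x) = m_2(z,x)$ at every eigenvalue. Since each $F_i$ has positive $x$-degree, the projection $\{F_i=0\} \to \cee$ onto the $z$-coordinate is surjective, so $\{|z|=1\} \cap \{F_i = 0\}$ contains a real one-parameter family of points, infinitely many of which satisfy $d = m_2 = e_i$. The locus $\{(z,x) : d(z,x) \geq k\}$ is algebraic, cut out by the $(r-k+1)\times(r-k+1)$ minors of $z(L(z)-xI_r)$, so its intersection with the irreducible curve $\{F_i = 0\}$ is either all of that curve or finite; since it is infinite, it must be cofinite. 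Combined with the trivial inequality $d \leq m_2 = e_i$, this forces $d(z,x) = e_i$ off a finite subset of $\{F_i=0\}$.

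Combining over the finitely many components yields $d = m_1 = m_2$ on the complement of a finite set in $\cee^2$. The principal obstacle is the non-squarefree case: on a component with $e_i \geq 2$, both $\partial_z F$ and $\partial_x F$ vanish identically along the component, so the naive simple-eigenvalue-on-the-curve argument cannot rule out that $L(z)$ fails to be semisimple generically along the component. The Hermiticity of $B$, via diagonalizability of $L(z)$ on $|z|=1$, is what propagates semisimplicity from the unit circle to the whole component by irreducibility.
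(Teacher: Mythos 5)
Your proof is correct and follows essentially the same two-step strategy as the paper's: establish $m_1=m_2$ via a factorization of $z^r f(z,x)$ into factors depending non-trivially on both variables, and establish $d=m_2$ by noting that $L(z)=zA^*+B+z^{-1}A$ is Hermitian (hence semisimple) for $|z|=1$ and then propagating this to generic $z$ along each component of the spectral curve. The only difference is one of detail: the paper uses a square-free factorization and defers the propagation step to Dur\'an's Lemma~2.2, whereas you use the irreducible factorization $F=\prod F_i^{e_i}$ and spell out the propagation explicitly via the Zariski-closedness of the degeneracy locus $\{d\geq k\}$ intersected with the irreducible curve $\{F_i=0\}$.
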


Lemma~\ref{lemma:alggeomult} is proved in
Section~\ref{subsection:proof:alggeo}. We note that the particular Hermitian
structure of the problem is needed in the proof.

\smallskip Now let $x\in\cee$ and consider the roots $z_1(x),\ldots,z_{2r}(x)$ in
\eqref{rootszk:ordering} with algebraic multiplicities taken into account.
Lemma~\ref{lemma:alggeomult} ensures that for all but finitely many $x\in\cee$,
we can find corresponding vectors $\vecv_1(x),\ldots,\vecv_{2r}(x)$ having unit
norm, satisfying \eqref{vector:v}, and such that
\begin{equation}\label{diagonalizable} \dim\{ \vecv_l(x)\mid l\in\{1,\ldots,2r\}\textrm{ with
}z_l(x)=z_k(x)\}=d(z_k(x),x)=m_1(z_k(x),x),\end{equation} for each fixed $k$.
%i.e., the collection of vectors $\vecv_l(x)$ for all $l$ with $z_l(x)=z_k(x)$
%forms a basis for the null space of the matrix in \eqref{vector:v}.
In what follows we will always assume that the vectors $\vecv_k(x)$ are chosen
in this way. We write $S\subset\cee$ for the set of those $x\in\cee$ for which
\eqref{diagonalizable} \emph{cannot} be achieved. Thus the set $S$ has a finite
cardinality.

\smallskip Now we are ready to describe the ratio asymptotics for the matrix
Nevai class. The next result should be compared to the one of Dur\' an
\cite{Duran}.

\begin{theorem}\label{theorem:ratioasy} (Ratio asymptotics.)
Let $A,B\in\cee^{r\times r}$ be matrices with $A$ non-singular and $B$
Hermitian. Let $P_n(x)$ satisfy \eqref{recurrence} and \eqref{Nevai:class}. Let
$M>0$ be such that all the zeros of all the polynomials $\det P_n(x)$ are in
$[-M,M]$, and let
%$\mathrm{conv}(\Gamma_0)\subset\er$ be the convex
%closure of $\Gamma_0$.
$S\subset \cee$ be the set of finite cardinality defined in the previous
paragraphs. Then for all $x\in\cee\setminus ([-M,M]\cup S)$ the limiting
$r\times r$ matrix
$$ \lim_{n\to\infty} P_{n}(x)P_{n+1}^{-1}(x)$$ exists entrywise and is diagonalizable,
with
%eigenvalues and eigenvectors given by $\vecv_1(x),\ldots,\vecv_r(x)$ and
%$z_1(x),\ldots,z_r(x)$ respectively, i.e.,
$$ \left(\lim_{n\to\infty} P_{n}(x)P_{n+1}^{-1}(x)\right)\vecv_k(x) =
z_k(x)\vecv_k(x),\qquad k=1,\ldots,r,
$$
uniformly for $k\in\{1,\ldots,r\}$ and for $x$ on compact subsets of
$\cee\setminus ([-M,M]\cup S)$. Here we take into account multiplicities as
explained in the paragraphs before the statement of the theorem.
\end{theorem}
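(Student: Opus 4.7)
The plan is to combine a normal-family argument (in the spirit of Kuijlaars--Van Assche) with the algebraic information from the quadratic eigenvalue problem \eqref{algebraic:equation} and Lemma~\ref{lemma:alggeomult}; the subsequential limits will all be identified through their common eigen-data $(z_k(x),\vecv_k(x))_{k=1}^{r}$, and this will force the full sequence to converge.

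\textbf{Step 1 (Normality and subsequential algebraic relation).} Set $R_n(x):=P_n(x)P_{n+1}^{-1}(x)$. Since the zeros of $\det P_{n+1}$ lie in $[-M,M]$, each $R_n$ is holomorphic on $\cee\setminus[-M,M]$, and the leading-coefficient identity $P_n(x)=(A_n\cdots A_1)^{-1}x^n+O(x^{n-1})$ gives $R_n(x)= A_{n+1}/x+O(x^{-2})$ at infinity. Identifying $R_n(x)A_{n+1}^{-1}$ with the top-left $r\times r$ block of the resolvent $(xI-J_{n+1})^{-1}$ (via Cramer's rule on the block-tridiagonal system, exploiting Hermiticity of $J_{n+1}$) yields $\|R_n(x)\|\le\|A_{n+1}\|/\operatorname{dist}(x,[-M,M])$. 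Montel's theorem then gives normality of $\{R_n\}$, and by a diagonal extraction one obtains a subsequence along which $R_{n+m}$ converges locally uniformly on $\cee\setminus[-M,M]$ to an analytic $R^{(m)}$, for every $m\in\zet$, with each $R^{(m)}(x)\sim A/x$ at infinity. Passing to the limit in the rearranged recurrence $((xI_r-B_n)-A_n^*R_{n-1})R_n=A_{n+1}$ gives
$$A^*R^{(m-1)}(x)R^{(m)}(x)+(B-xI_r)R^{(m)}(x)+A=0,\qquad m\in\zet.$$

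\textbf{Step 2 (Eigen-data identification --- the main obstacle).} I claim that for each $m$ the eigenvalues of $R^{(m)}(x)$ are precisely $z_1(x),\ldots,z_r(x)$, with associated eigenvectors given by \eqref{vector:v}. \emph{Infinity step:} from $R^{(m)}(x)\sim A/x$ the characteristic polynomial $\det(yI_r-R^{(m)}(x))$ has roots behaving like $\lambda_k(A)/x$; inspection of \eqref{algebraic:equation:Laurent}, using $f_r=\det A^*\neq 0$ and $f_{-r}=\det A\neq 0$, shows that exactly $r$ of the $2r$ roots of $f(\cdot,x)=0$ go to $0$ like $\lambda_k(A)/x$ while the other $r$ blow up like $x/\lambda_k(A^*)$, so the small-modulus roots are precisely $z_1(x),\ldots,z_r(x)$, and the polynomial identity $\det(z_k(x)I_r-R^{(m)}(x))=0$ holds for $k=1,\ldots,r$ and $|x|$ large. \emph{Analytic continuation step:} Proposition~\ref{prop:Gamma0} places $\Gamma_0$ on the real line, so the strict ordering $|z_r(x)|<|z_{r+1}(x)|$ holds off a real set and $\cee\setminus([-M,M]\cup S)$ is connected; the identity principle then extends the identification to the whole region. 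The same two-part argument, applied to the equivalent form $R^{(m-1)}=(A^*)^{-1}((xI-B)-A(R^{(m)})^{-1})$ from Step~1, together with the matching asymptotics $R^{(m)}(x),R^{(m-1)}(x)\sim A/x$, identifies the eigenvectors with the $\vecv_k(x)$ of \eqref{vector:v}.

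\textbf{Step 3 (Uniqueness of limit and conclusion).} Lemma~\ref{lemma:alggeomult} together with \eqref{diagonalizable} ensures that for $x\in\cee\setminus([-M,M]\cup S)$ the vectors $\vecv_1(x),\ldots,\vecv_r(x)$ span $\cee^r$ (respecting multiplicities). Hence the relations $R^{(m)}(x)\vecv_k(x)=z_k(x)\vecv_k(x)$, $k=1,\ldots,r$, uniquely determine $R^{(m)}(x)$, so all $R^{(m)}$ coincide with a single matrix $R(x)$, which in particular satisfies the fixed-point identity \eqref{R:matrix:relation}. Therefore every subsequential limit of $R_n(x)$ equals $R(x)$, the full sequence $R_n(x)$ converges, and the Vitali--Porter theorem (applicable to the normal family) upgrades pointwise to locally uniform convergence on $\cee\setminus([-M,M]\cup S)$, which delivers the uniformity in $k\in\{1,\ldots,r\}$ asserted by the theorem.
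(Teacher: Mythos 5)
Your Step 2 has a genuine gap. The ``infinity step'' establishes only that, at leading order as $x\to\infty$, both the eigenvalues of a subsequential limit $R^{(m)}(x)$ and the small-modulus roots $z_1(x),\ldots,z_r(x)$ of $f(\cdot,x)=0$ behave like $\lambda_k(A)/x$. Leading-order asymptotic agreement does \emph{not} imply the exact identity $\det(z_k(x)I_r-R^{(m)}(x))=0$ for large $|x|$, and without an exact identity on an open set the identity principle in your ``analytic continuation step'' has nothing to continue. This is not a cosmetic issue: the only algebraic relation you have available is the two-term chain
\begin{equation*}
A^*R^{(m-1)}(x)\,R^{(m)}(x)+(B-xI_r)\,R^{(m)}(x)+A=0 ,
\end{equation*}
which mixes $R^{(m-1)}$ and $R^{(m)}$; if you apply it to an eigenvector $\vecv$ of $R^{(m)}$ you pick up the unknown quantity $R^{(m-1)}\vecv$, not $z\vecv$, and substituting the rearranged form for $R^{(m-1)}$ back in simply yields $0=0$. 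So the quadratic eigenvalue equation \eqref{vector:v} is not a consequence of this relation alone; it would be if you already knew $R^{(m-1)}=R^{(m)}$, which is exactly what one is trying to prove. The paper closes precisely this gap with a bootstrap: an induction in $l$ showing that $P_{n_i}P_{n_i+1}^{-1}\vecv_k=z_k\vecv_k+O(x^{-l})$ improves by two orders upon each application of the three-term recurrence, and this is the indispensable core of the argument. Your proposal omits it entirely.

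Two lesser remarks. Your resolvent identification in Step 1 should read \emph{bottom-right} (not top-left) block of $(xI-J_{n+1})^{-1}$; the paper reaches the same uniform bound via Lemma~\ref{lemma:DetteReuther} plus the polarization identity, so that route is fine in spirit. Step 3 (spanning by Lemma~\ref{lemma:alggeomult}/\eqref{diagonalizable}, uniqueness of the subsequential limit, Vitali) is sound, but it rests on the unestablished Step 2.
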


%\begin{proof}
%To be constructed. (\ldots)
%\end{proof}

Theorem~\ref{theorem:ratioasy} is proved in
Section~\ref{subsection:proof:ratio}.
%The proof will use a \emph{normal family}
%argument in the spirit of \cite{BDK,CCV,Dette,KVA,Roman}. We note that an
%alternative proof could be constructed with the help of the \emph{generalized
%Poincar\'e theorem} (see \cite{MN,Simon2} for this theorem); but our proof has
%the advantage that it can be extended to the context of slowly varying
%recurrence coefficients, see Section~\ref{subsection:N}.
See also Section~\ref{section:beyondNevai}
%Sections~\ref{subsection:N} and \ref{subsection:p}
for the generalization of Theorem~\ref{theorem:ratioasy} beyond the matrix
Nevai class.

Since the determinant of a matrix is the product of its eigenvalues,
Theorem~\ref{theorem:ratioasy} implies:

\begin{corollary}\label{cor:ratiodet} Under the assumptions of Theorem~\ref{theorem:ratioasy} we have
$$ \lim_{n\to\infty}\frac{\det P_{n}(x)}{\det P_{n+1}(x)} = z_1(x)\ldots z_r(x), $$
uniformly on compact subsets of $\cee\setminus [-M,M]$.
\end{corollary}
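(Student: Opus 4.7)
The plan is to deduce the corollary from Theorem~\ref{theorem:ratioasy} by applying the determinant to the matrix ratio, and then to remove the exceptional finite set $S$ using a normal family argument.

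First I would observe that by Theorem~\ref{theorem:ratioasy}, the matrix $R_n(x) := P_n(x)P_{n+1}^{-1}(x)$ converges entrywise, uniformly on compact subsets of $\cee\setminus([-M,M]\cup S)$, to a matrix $R(x)$ which is diagonalizable with eigenvalues $z_1(x),\ldots,z_r(x)$ (taken with multiplicities). Since the determinant is a polynomial (hence continuous) function of the entries, this gives
\[
\frac{\det P_n(x)}{\det P_{n+1}(x)} \;=\; \det R_n(x) \;\longrightarrow\; \det R(x) \;=\; z_1(x)\cdots z_r(x),
\]
uniformly on compact subsets of $\cee\setminus([-M,M]\cup S)$.

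It remains to upgrade this to uniform convergence on compact subsets of $\cee\setminus [-M,M]$, i.e.\ to absorb the finitely many exceptional points of $S$ that may lie outside $[-M,M]$. The key observation is that the functions $g_n(x):=\det P_n(x)/\det P_{n+1}(x)$ are rational in $x$ with poles only at the zeros of $\det P_{n+1}$, and these zeros lie in $[-M,M]$ by hypothesis. Hence each $g_n$ is analytic on the open set $\cee\setminus[-M,M]$, and so is a normal-family candidate there. The plan is to invoke Vitali's theorem: given a compact $K\subset\cee\setminus[-M,M]$ and a point $s\in S\cap K$, choose a small closed disk $\overline{D}\subset\cee\setminus[-M,M]$ centered at $s$ whose boundary circle avoids the finite set $S$. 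On $\partial D$ the sequence $g_n$ already converges uniformly by the previous paragraph, so it is uniformly bounded there; by the maximum modulus principle applied to each analytic function $g_n-g_m$ on $D$, the sequence is Cauchy uniformly on $\overline{D}$. Covering $K$ by finitely many such disks and open subsets of $\cee\setminus([-M,M]\cup S)$ yields uniform convergence on $K$.

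I expect the only mild obstacle to be the verification that the candidate limit $z_1(x)\cdots z_r(x)$ extends analytically across $S$; this follows because the roots of the quadratic eigenvalue problem \eqref{algebraic:equation} separate into the ``$r$ smallest moduli'' and the rest precisely off of $\Gamma_0$, and by Proposition~\ref{prop:Gamma0} we have $\Gamma_0\subset\er$ (and in fact $\Gamma_0\subset[-M,M]$, since it attracts the zeros), so on $\cee\setminus[-M,M]$ the product $z_1\cdots z_r$ is a symmetric function of an analytically varying subset of roots and hence is itself analytic. In particular there is no singular behavior of the limit at points of $S$, which confirms that the Vitali extension gives the stated uniform convergence.
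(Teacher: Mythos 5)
Your proof is correct, and the core mechanism (Vitali / normal family) is what the paper is implicitly invoking when it attributes the improvement from $\cee\setminus([-M,M]\cup S)$ to $\cee\setminus[-M,M]$ to Lemma~\ref{lemma:DetteReuther}. The route is slightly different, though: the paper reads Lemma~\ref{lemma:DetteReuther} (through its corollary \eqref{polar:identity}) as giving a \emph{global} uniform bound on $P_n(z)P_{n+1}^{-1}(z)$ away from $[-M,M]$, hence on its determinant, and then applies Vitali; you instead extract the needed boundedness \emph{locally} by placing a small circle around each point of $S$ inside $\cee\setminus[-M,M]$, using the already-established uniform convergence on that circle, and applying the maximum modulus principle to $g_n-g_m$. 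Your version is a bit more self-contained for this particular step since it does not reuse the a priori estimate, at the small cost of the covering argument.

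One remark on your last paragraph: the verification that $z_1(x)\cdots z_r(x)$ extends analytically across $S$ is not actually an extra obstacle. Once your Vitali argument produces an analytic limit $g$ on all of $\cee\setminus[-M,M]$ that agrees with $z_1\cdots z_r$ on the open dense complement of $S$, the identity theorem already forces $g$ to \emph{be} the analytic continuation of $z_1\cdots z_r$. So the argument is complete without it. Moreover, the justification you sketch there — $\Gamma_0\subset[-M,M]$ ``since it attracts the zeros'' — leans on Theorem~\ref{theorem:zerodistr}, which in the paper is itself deduced from this Corollary; you should drop that remark to avoid the appearance of circularity (or else derive $\Gamma_0\subset[-M,M]$ by a direct spectral argument, which would take more work than the problem needs).
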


Note that the convergence in the previous result holds in $\cee\setminus
[-M,M]$ rather than $\cee\setminus ([-M,M]\cup S)$. This is due to
Lemma~\ref{lemma:DetteReuther}. See Section~\ref{section:Cheb} below for some
further corollaries of Theorem~\ref{theorem:ratioasy} in terms of the matrix
Chebychev polynomials.

\subsection{Limiting zero distribution}

With the above results on ratio asymptotics in place, it is a rather standard
routine to obtain the limiting zero distribution for the polynomials
$(P_n(x))_{n=0}^\infty$ in the matrix Nevai class. {Recall that the
zeros of the matrix polynomial $P_n(x)$ are defined as the zeros of $\det
P_n(x)$,
%and it is well known that all zeros of
%matrix polynomial $P_n(x)$ generated by \eqref{recurrence} are real. In fact,
%the zeros of $P_n(x)$ are equal to
or equivalently the eigenvalues of the Hermitian matrix $J_n$ in
\eqref{block:Jacobi}}. If these zeros are denoted by $x_1\leq x_2\leq
\ldots\leq x_{rn}$ (taking into account multiplicities), then we define the
\emph{normalized zero counting measure} by
\begin{equation}\label{count}
 \nu_n = \frac{1}{rn}\sum_{k=1}^{rn}\delta_{x_k},
\end{equation}
where $\delta_x$ is the Dirac measure at the point $x$.

\begin{theorem}\label{theorem:zerodistr}
Under the assumptions of Theorem~\ref{theorem:ratioasy} the normalized zero
counting measure $\nu_n$ defined in \eqref{count} has a weak limit $\mu_0$ for $n\to\infty$. The
(probability) measure $\mu_0$ is supported on the set $\Gamma_0$ defined in \eqref{Gamma0} and has logarithmic
potential
\begin{equation}\label{limiting:measure:mu0}
\int \log|x-t|^{-1}\ \ud\mu_0(t) = \frac 1r \log|z_1(x)\ldots z_r(x)|+C,\qquad
x\in\cee\setminus\Gamma_0,
\end{equation}
{for some explicit constant $C$ (actually $C=-\frac 1r \log|\det A|$.)}
\end{theorem}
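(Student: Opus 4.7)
The plan is to reduce the zero distribution statement to the ratio asymptotics of Corollary~\ref{cor:ratiodet}, via the identification of the logarithmic potential of $\nu_n$ with $(rn)^{-1}\log|\det P_n(x)|$ up to a normalization. Concretely, since the zeros $x_1,\ldots,x_{rn}$ of $\det P_n$ are the eigenvalues of $J_n$ and $\det P_n(x)=c_n\det(xI_{rn}-J_n)$, the potential $U^{\nu_n}(x):=\int\log|x-t|^{-1}\,\ud\nu_n(t)$ satisfies
$$ U^{\nu_n}(x) \;=\; -\frac{1}{rn}\log|\det P_n(x)|+\frac{1}{rn}\log|c_n|. $$

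Next I would invoke Ces\`aro summation. Writing (with $P_0\equiv I_r$)
$$ \log|\det P_n(x)| \;=\; \sum_{k=0}^{n-1}\log\bigl|\det P_{k+1}(x)/\det P_k(x)\bigr|, $$
Corollary~\ref{cor:ratiodet} says each summand converges to $\log|z_1(x)\cdots z_r(x)|$, uniformly on compacta of $\cee\setminus[-M,M]$, so the Ces\`aro theorem yields $(rn)^{-1}\log|\det P_n(x)|\to -\tfrac{1}{r}\log|z_1(x)\cdots z_r(x)|$. Similarly, $c_n=\prod_{k=1}^n(\det A_k)^{-1}$ and $A_k\to A$ give $(rn)^{-1}\log|c_n|\to -\tfrac{1}{r}\log|\det A|=:C$. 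Combining, $U^{\nu_n}(x)\to\tfrac{1}{r}\log|z_1(x)\cdots z_r(x)|+C$ on $\cee\setminus[-M,M]$.

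To promote pointwise convergence of potentials to weak convergence of measures I would use standard potential theory. All $\nu_n$ are probability measures supported in the fixed compact $[-M,M]$, so Helly selection produces subsequential weak limits $\mu^*$; for $x\notin[-M,M]$ the kernel $t\mapsto\log|x-t|$ is continuous and bounded on $[-M,M]$, hence $U^{\mu^*}(x)=\tfrac{1}{r}\log|z_1\cdots z_r|+C$ on $\cee\setminus[-M,M]$. A compactly supported probability measure on $\er$ is uniquely determined by its logarithmic potential off the support (equivalently, by its Stieltjes transform, obtained as $-\partial_x U^{\mu^*}$), so every subsequential limit coincides and the full sequence converges weakly to a unique $\mu_0$ satisfying \eqref{limiting:measure:mu0}.

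Finally, for $\supp\mu_0\subseteq\Gamma_0$ I would argue that on the connected open set $\cee\setminus\Gamma_0$ the strict gap $|z_r(x)|<|z_{r+1}(x)|$ allows a local holomorphic factorisation of the degree-$2r$ polynomial $z^r f(z,x)$ into two factors whose roots are $\{z_1,\ldots,z_r\}$ and $\{z_{r+1},\ldots,z_{2r}\}$ respectively, with coefficients holomorphic in $x$; by Vieta, $z_1(x)\cdots z_r(x)$ is then holomorphic and non-vanishing on $\cee\setminus\Gamma_0$. Hence the right-hand side of \eqref{limiting:measure:mu0} is harmonic there, and since it coincides with $U^{\mu_0}$ on $\cee\setminus[-M,M]$, uniqueness of harmonic continuation on the connected set $\cee\setminus\Gamma_0$ extends the identity to all of $\cee\setminus\Gamma_0$, making $U^{\mu_0}$ harmonic off $\Gamma_0$ and therefore forcing $\supp\mu_0\subseteq\Gamma_0$. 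The main obstacle I anticipate lies precisely in this last step: verifying the single-valuedness and holomorphy of the product $z_1(x)\cdots z_r(x)$ off $\Gamma_0$, which is exactly where the spectral-gap defining $\Gamma_0$ in \eqref{Gamma0} enters essentially (the individual $z_k(x)$ generally have branch points on $\Gamma_0$).
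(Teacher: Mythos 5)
Your telescoping-product plus Ces\`aro plus ratio-asymptotics argument is exactly the paper's proof: the paper writes $\det P_n$ as a telescoping product of ratios, takes $\frac{1}{rn}\log$, invokes Corollary~\ref{cor:ratiodet} with Ces\`aro, and takes real parts to identify the logarithmic potential (leaving the Helly/uniqueness step and the evaluation of $C$ implicit, both of which you correctly supply). Your computation $C=-\frac1r\log|\det A|$ via $c_n=\prod_k(\det A_k)^{-1}$ also matches what the paper asserts.

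The one place where you go beyond the paper, the derivation of $\supp\mu_0\subseteq\Gamma_0$, contains a circularity as stated. You write that since the RHS of \eqref{limiting:measure:mu0} is harmonic on the connected set $\cee\setminus\Gamma_0$ and coincides with $U^{\mu_0}$ on $\cee\setminus[-M,M]$, ``uniqueness of harmonic continuation \dots extends the identity to all of $\cee\setminus\Gamma_0$, making $U^{\mu_0}$ harmonic off $\Gamma_0$.'' But the identity theorem only lets you propagate equality of two functions over a domain on which \emph{both} are already known to be harmonic; $U^{\mu_0}$ is a priori harmonic only on $\cee\setminus\supp\mu_0$, and asserting that this contains $\cee\setminus\Gamma_0$ is precisely the conclusion you want. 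The repair is to argue on $\cee\setminus(\Gamma_0\cup\supp\mu_0)$, which is still connected since $\Gamma_0\cup\supp\mu_0$ is a compact subset of the real line: there $U^{\mu_0}$ and the RHS are both harmonic and agree on $\cee\setminus[-M,M]$, hence agree throughout by the identity theorem; one then rules out mass of $\mu_0$ off $\Gamma_0$ by comparing boundary behavior (e.g.\ the Stieltjes--Plemelj jump of $\partial_y U^{\mu_0}$ across an interval $(a,b)\subset\er\setminus\Gamma_0$ would vanish, forcing $\mu_0$ to have zero density there, and a point mass would make $U^{\mu_0}$ unbounded while the RHS stays bounded). Your observation that $z_1(x)\cdots z_r(x)$ is single-valued, holomorphic and zero-free on $\cee\setminus\Gamma_0$, deduced from the strict modulus gap and the fact that $f_{-r}=\det A\neq 0$, is correct and is the genuine content here.
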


Recall that a measure $\mu_0$ on the real line is completely determined from
its logarithmic potential \cite{SaffTotik}.

Theorem~\ref{theorem:zerodistr} will be proved in
Section~\ref{subsection:proof:zero}, with the help of
Corollary~\ref{cor:ratiodet}. In the proof we will obtain a stronger version of
\eqref{limiting:measure:mu0}, with the absolute value signs in the logarithms
removed. Moreover, in Section~\ref{section:beyondNevai} we will extend
Theorem~\ref{theorem:zerodistr} beyond the matrix Nevai class.

%Let us point out that an alternative, linear algebra theoretic proof of
%Theorem~\ref{theorem:zerodistr} could be designed along the lines of
%\cite{KSS}, using that the block Jacobi matrix \eqref{block:Jacobi} is
%Hermitian; however our approach has the advantage that it can also be used in
%the non-Hermitian case, at least in principle. That is, most of our method
%survives if the recurrence matrix \eqref{block:Jacobi} generating the $P_n(x)$,
%is no longer Hermitian, or when it has a larger band width in its block lower
%triangular part, as in \cite{BDK}. In fact the key places where we use the
%Hermiticity are in the proof that $\Gamma_0\subset\er$ and in the proof of
%Lemma~\ref{lemma:DetteReuther}, see \cite{Dette}; but it is reasonable to
%expect that both $\Gamma_0\subset\er$ and Lemma~\ref{lemma:DetteReuther} remain
%true for some specific non-Hermitian cases as well. This may be an interesting
%topic for further research.

It can be shown that the measure $\mu_0$ in Theorem~\ref{theorem:zerodistr} is
absolutely continuous on $\Gamma_0\subset\er$ with density (see also
\cite{BDK,Delvaux})
\begin{equation} \label{measure:mu0} \ud\mu_{0}(x) = \frac{1}{r}\frac{1}{2\pi i}\sum_{j=1}^{r}\left(
\frac{z_{j+}'(x)}{z_{j+}(x)}-\frac{z_{j-}'(x)}{z_{j-}(x)} \right)\ud x,\qquad
x\in\Gamma_0.
\end{equation}
Here the prime denotes the derivation with respect to $x$, and $z_{j+}(x)$ and
$z_{j-}(x)$ are the boundary values of $z_j(x)$ obtained from the upper and
lower part of the complex plane respectively. These boundary values exist for
all but a finite number of points $x\in\Gamma_0$. See Section~\ref{section:DLS}
for a comparison to the formulas of \cite{DLS}.

%\subsection{Organization of the paper}

\subsection{Alternative description of the limiting zero distribution}
\label{subsection:mu0alternative}

First we give an alternative description of the set $\Gamma_0$ in
\eqref{Gamma0}. For the proof see Section~\ref{subsection:proof:Gamma0bis}.

\begin{proposition}\label{prop:Gamma0:bis} We have \begin{equation}\label{Gamma0:unitnorm}
\Gamma_0 = \{x\in\cee\mid \exists \textrm{$z\in\cee$ with $f(z,x)=0$ and
$|z|=1$} \}\subset\er.\end{equation} %or equivalently $$\Gamma_0= \{x\in\er\mid \exists \textrm{$z$
%with $f(z,x)=0$ and $|z|=1$} \}.$$
In words, $\Gamma_0$ is the set of all points $x$ for which $f(z,x)=0$ has a
root with unit modulus.
\end{proposition}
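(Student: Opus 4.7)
\bigskip
\textbf{Proof plan.} The plan is to establish the equality of the two sets by (i) showing both are contained in $\er$, and (ii) for $x\in\er$, showing that having a root on the unit circle is equivalent to the equality $|z_r(x)|=|z_{r+1}(x)|$ via a symmetry of the roots under $z\mapsto\bar z^{-1}$.

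First, I would verify the inclusion ``$\supset$'' (up to showing reality). If $f(z,x)=0$ for some $|z|=1$, then $z^{-1}=\bar z$, so the matrix
$$ zA^*+B+z^{-1}A = zA^*+B+\bar zA$$
is Hermitian ($(zA^*)^*=\bar zA$, and $B$ is Hermitian). Hence $x$ is an eigenvalue of a Hermitian matrix, so $x\in\er$. Combined with Proposition~\ref{prop:Gamma0}, both sides of \eqref{Gamma0:unitnorm} are subsets of $\er$, so it suffices to prove the equality for real $x$.

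Next I would exploit the symmetry of the algebraic equation for $x\in\er$. Taking the Hermitian conjugate of the matrix inside the determinant in \eqref{algebraic:equation} and using $\det(M^*)=\overline{\det M}$ together with $B=B^*$ gives
\begin{equation*}
f(\bar z^{-1},x) \;=\; \overline{f(z,x)}, \qquad x\in\er.
\end{equation*}
Therefore the map $z\mapsto \bar z^{-1}$ permutes the roots of $f(\cdot,x)=0$, preserving multiplicities. Let $N_-$, $N_0$, $N_+$ denote the number of roots (with multiplicity) with $|z|<1$, $|z|=1$, $|z|>1$, respectively. Since the involution $z\mapsto\bar z^{-1}$ interchanges the first and third classes bijectively and fixes the second, we have $N_-=N_+$, together with $N_-+N_0+N_+=2r$. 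Thus $N_-=r-N_0/2$, and in particular $N_0$ is even.

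Finally I would read off the two cases. If $N_0=0$, then $N_-=N_+=r$, so in the ordering \eqref{rootszk:ordering} we get $|z_r(x)|<1<|z_{r+1}(x)|$, and $x\notin\Gamma_0$; and there is no root on the unit circle. If $N_0>0$, then $N_-<r$ and $N_-+N_0=r+N_0/2\geq r+1$, so both $z_r(x)$ and $z_{r+1}(x)$ lie on the unit circle, giving $|z_r(x)|=|z_{r+1}(x)|=1$ and thus $x\in\Gamma_0$; here a root of unit modulus exists by assumption. This proves the equivalence for $x\in\er$, completing the proof. The only mildly technical point is the bookkeeping with multiplicities under the involution $z\mapsto\bar z^{-1}$, but this is automatic since the symmetry $f(\bar z^{-1},x)=\overline{f(z,x)}$ is an identity between polynomials (after clearing $z^r$).
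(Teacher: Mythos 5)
Your proposal is correct and follows essentially the same route as the paper: establish that both sides live in $\er$ (via Proposition~\ref{prop:Gamma0} for the left and the Hermitian-matrix argument for the right), then use the symmetry $\overline{f(z,x)}=f(\bar z^{-1},x)$ for real $x$ to pair roots under $z\mapsto\bar z^{-1}$ and deduce that $|z_r(x)|=|z_{r+1}(x)|$ if and only if some root lies on the unit circle. Your explicit bookkeeping with $N_-,N_0,N_+$ is a slightly more spelled-out version of the paper's observation that, with the modulus ordering, $|z_k(x)|\cdot|z_{2r+1-k}(x)|=1$, but the underlying idea is identical.
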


%Note that this description is consistent with the fact that
%$\Gamma_0\subset\er$. Indeed, for any $z$ with $|z|=1$ we can find the
%corresponding numbers $x$ by solving
%\begin{equation*}
%0=f(z,x) = \det(A^*z+B+A\overline{z}-x I_r),
%\end{equation*}
%where the bar denotes the complex conjugation. So the numbers $x$ are the
%eigenvalues of the Hermitian matrix $A^*z+B+A\overline{z}$ and therefore are
%real.

Now let $$\mathcal I:=(x_1,x_2)\subset\Gamma_0$$ be an open interval disjoint
from the set of branch points of the algebraic equation $f(z,x)=0$. {We can
then choose a labeling of the roots so that each $z_k(x)$,
%depends analytically on $x\in
%\mathcal I$. That is, each $z_k(x)$,
$x\in \mathcal I$, is the restriction to $\mathcal I$ of an analytic function
defined in an open complex neighborhood $\Omega\supset \mathcal I$. Note that
we do not insist to have the ordering \eqref{rootszk:ordering} anymore}. If $k$
is such that $|z_k(x)|=1$ throughout the interval $\mathcal I$ then we write
$$z_k(x)=e^{i\theta_k(x)},\qquad x\in \mathcal I,$$ with $\theta_k$ a real valued, differentiable argument function on
$\mathcal I$. Observe that
$$ \frac{z_k'(x)}{z_k(x)} = i\theta_k'(x),\qquad x\in \mathcal I.
$$
%with the prime denoting derivation with respect to $x$.
Moreover $\theta_k'(x)$ describes how fast $z_k(x)$ runs on the unit circle in
function of $x\in \mathcal I$.

We can now give an alternative description of the measure $\mu_0$ in
\eqref{measure:mu0} in the following  proposition which
is proved in
Section~\ref{subsection:proof:mu0bis}.

\begin{proposition}\label{prop:mu0:bis} With the above notations we have
\begin{eqnarray} \label{measure:mu0:bis} \frac{\ud\mu_{0}(x)}{\ud x} &=& \frac{1}{2\pi r}
\sum_{k: |z_k(x)|=1}\left| \frac{z_{k}'(x)}{z_{k}(x)} \right|\\
&=& \frac{1}{2\pi r} \sum_{k: |z_k(x)|=1}\left| \theta_k'(x)\right|,\qquad x\in
\mathcal I.
\end{eqnarray}
Moreover, $\theta_k'(x)\neq 0$ for any $x\in \mathcal I$ and for any $k$ with
$|z_k(x)|=1$.
%in the interior of $\Gamma_0$
%(viewed as a subset of~$\er$).
\end{proposition}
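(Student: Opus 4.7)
The plan is to derive the alternative formula from \eqref{measure:mu0} by analyzing the jump across $\mathcal I$ of the sum $\sum_{j=1}^{r} z_j'(x)/z_j(x)$. Because $\mathcal I$ avoids branch points, the $2r$ roots $z_k$ of $f(z,x)=0$ extend to single-valued analytic functions on $\Omega$. Their indices split on each component of $\mathcal I$ into three classes $J$, $K$, $L$, according to whether $|z_k|$ is $<1$, $=1$, or $>1$ on $\mathcal I$ (the trichotomy is forced by analyticity, since if $|z_k|=1$ on a subinterval then the identity theorem extends it to all of $\mathcal I$).

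The key ingredient is the Hermitian symmetry: since $B=B^*$ and $x$ is real, $f(z,x)=0$ iff $f(1/\bar z,x)=0$, so the map $z_k(x)\mapsto \tilde z_k(x):=1/\overline{z_k(\bar x)}$, which is holomorphic on $\Omega$ by Schwarz reflection, permutes the roots. It pairs $J$ with $L$ and fixes each $k\in K$, because $\tilde z_k=z_k$ on $\mathcal I$ implies $\tilde z_k\equiv z_k$ on $\Omega$. The self-pairing relation $z_k(x)\overline{z_k(\bar x)}=1$ then forces $|z_k(x+i\epsilon)|\cdot|z_k(x-i\epsilon)|=1$, so $K$ splits further as $K=K^+\sqcup K^-$ according to whether $|z_k|<1$ just above or just below $\mathcal I$. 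By Theorem~\ref{theorem:ratioasy} applied at $x\pm i\epsilon$, the $r$ roots of smallest modulus are $J\cup K^+$ from above and $J\cup K^-$ from below, so $|J|+|K^\pm|=r$ and the $J$-contributions cancel in the jump:
\[
\sum_{j=1}^{r}\left(\frac{z_{j+}'(x)}{z_{j+}(x)}-\frac{z_{j-}'(x)}{z_{j-}(x)}\right)=\sum_{k\in K^+}\frac{z_k'(x)}{z_k(x)}-\sum_{k\in K^-}\frac{z_k'(x)}{z_k(x)}.
\]

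Next I fix the signs. For $k\in K$, writing $z_k(x)=e^{i\theta_k(x)}$ gives $z_k'(x)/z_k(x)=i\theta_k'(x)$ purely imaginary, and the Cauchy--Riemann identity for $\log z_k=\log|z_k|+i\arg z_k$ gives $\theta_k'(x)=-\partial_y\log|z_k(x+iy)|\big|_{y=0^+}$. The harmonic function $\log|z_k(x+iy)|$ vanishes on $\mathcal I$ and has constant strict sign in a one-sided half-neighborhood (negative if $k\in K^+$, positive if $k\in K^-$), so Hopf's lemma yields a strictly nonzero normal derivative; this gives $\theta_k'>0$ strictly on $\mathcal I$ for $k\in K^+$ and $\theta_k'<0$ strictly for $k\in K^-$. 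This simultaneously establishes the \emph{moreover} clause.

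Combining these observations the jump equals $i\sum_{k\in K}|\theta_k'(x)|$, and substituting into \eqref{measure:mu0} produces
\[
\frac{\ud\mu_0}{\ud x}=\frac{1}{r}\cdot\frac{1}{2\pi i}\cdot i\sum_{k\in K}|\theta_k'(x)|=\frac{1}{2\pi r}\sum_{k:\,|z_k(x)|=1}|\theta_k'(x)|,
\]
which is \eqref{measure:mu0:bis}, since $|z_k'/z_k|=|\theta_k'|$. The main technical obstacle I anticipate is the sign analysis: rigorously applying Hopf's lemma requires that the sign of $\log|z_k|$ be constant in a genuine one-sided half-neighborhood of all of $\mathcal I$ (not merely pointwise), which in turn rests on the non-constancy of the algebraic function $z_k$ and on continuity of the classification into $K^+/K^-$ across $\mathcal I$, both consequences of the branch-point-free hypothesis on $\mathcal I$.
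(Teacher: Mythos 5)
Your proof is correct and follows essentially the same route as the paper for the main identity: both arguments restrict \eqref{measure:mu0} to an interval $\mathcal I$ away from branch points, cancel the contributions from roots of modulus strictly less than one (these lie in both $S_+(x)$ and $S_-(x)$ in the paper's notation, your class $J$), and use the Cauchy--Riemann equations to read off the sign of $\theta_k'(x)$ from whether $|z_k|$ dips below one just above or just below $\mathcal I$. Your additional bookkeeping via the Hermitian reflection $z\mapsto 1/\bar z$ (pairing $J$ with $L$, self-pairing $K$, and the count $|J|+|K^\pm|=r$) is a useful explicit verification of what the paper leaves implicit from Propositions~\ref{prop:Gamma0} and~\ref{prop:Gamma0:bis}.

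Where you genuinely diverge is in the proof of the clause $\theta_k'(x)\neq 0$. The paper disposes of it by citing ``general considerations'' from the proof of Theorem 11.1.1(v) in \cite{Simon2}: if $\theta_k'$ vanished somewhere, $\Gamma_0$ would fail to stay in $\er$. You instead invoke Hopf's lemma for the harmonic function $\log|z_k(x+iy)|$, which vanishes on $\mathcal I$ and has one-sided strict sign (the strict sign follows because $|z_k|=1$ forces $x\in\er$, as established in the proof of Proposition~\ref{prop:Gamma0}, so $\log|z_k|\neq 0$ off the real axis). This is a clean, self-contained argument and is, if anything, more elementary than the route through \cite{Simon2}. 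The only point to make fully explicit is the strict sign of $\log|z_k|$ on the \emph{whole} half-disk (not just near $\mathcal I$), which you get precisely from the observation above plus the maximum principle; you flagged this yourself at the end, and it does go through.
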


\section{Generalizations of the matrix Nevai class}
\label{section:beyondNevai}

\subsection{Slowly varying recurrence coefficients}
\label{subsection:N}

In this section we consider a first type of generalization of the matrix Nevai
class. We will assume that the recurrence coefficients $A_n$ and $B_n$ depend
on an additional variable $N>0$, as in Dette-Reuther \cite{Dette}. We write
$A_{n,N}$ and $B_{n,N}$. For each fixed $N>0$ we define the matrix-valued
polynomials $P_{n,N}(x)$ generated by the recurrence (compare with
\eqref{recurrence})
\begin{equation}\label{recurrence:N} xP_{n,N}(x) = A_{n+1,N} P_{n+1,N}(x)+B_{n,N}
P_{n,N}(x)+A_{n,N}^* P_{n-1,N}(x),\qquad n\geq 0,
\end{equation}
with again the initial conditions $P_{0,N}=I_r$ and $P_{-1,N}=0$.

Assume that the following limits exist:
\begin{equation}\label{Nevai:class:N}
\lim_{n/N\to s} A_{n,N}=:A_{s},\qquad \lim_{n/N\to s} B_{n,N}=:B_{s},
\end{equation}
for each $s>0$, where $\lim_{n/N\to s}$ means that we let both $n,N$ tend to
infinity in such a way that the ratio $n/N$ converges to $s>0$. We assume that each
$A_{s}$ is non-singular. We trust that the notation $A_s, B_s$ ($s>0$) will not
lead to confusion with our previous usage of $A_n,B_n$ ($n\in\zet_{\geq 0}$).

For each $s>0$ define the algebraic equation
\begin{equation}\label{algebraic:equation:N}
0 = f_{s}(z,x) := \det(A_{s}^*z+B_{s}+A_{s} z^{-1}-x I_r).
\end{equation}
We again define the roots $z_k(x,s)$, $k=1,\ldots,2r$ to this equation, ordered
as in \eqref{rootszk:ordering}, and we find the corresponding null space
vectors $\vecv_k(x,s)$ as in \eqref{vector:v}.  We also define the finite
cardinality set $S_s\subset\cee$ as before, and we define $\Gamma_0(s)$ as in
\eqref{Gamma0}.

We now formulate the extensions of Theorems~\ref{theorem:ratioasy} and
\ref{theorem:zerodistr} to the present setting.

\begin{theorem}\label{theorem:ratioasy:N}
Fix $s>0$ and assume \eqref{recurrence:N} and \eqref{Nevai:class:N} with the
limits $A_s,B_s$ depending continuously on $s\geq 0$. Then for all
$x\in\cee\setminus ([-M,M]\cup S_s)$ the limiting $r\times r$ matrix
$$ \lim_{n/N\to s} P_{n,N}(x)P_{n+1,N}^{-1}(x)$$ exists entrywise and is diagonalizable,
with
$$ \left(\lim_{n/N\to s} P_{n,N}(x)P_{n+1,N}^{-1}(x)\right)\vecv_k(x,s) =
z_k(x,s)\vecv_k(x,s),\qquad k=1,\ldots,r,
$$
uniformly for $x$ on compact subsets of $\cee\setminus ([-M,M]\cup S_s)$. We
also have $$ \lim_{n/N\to s}\frac{\det P_{n,N}(x)}{\det P_{n+1,N}(x)} =
z_1(x,s)\ldots z_r(x,s),
$$ uniformly on compact subsets of $\cee\setminus [-M,M]$.

\end{theorem}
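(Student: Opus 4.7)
The plan is to adapt the normal-family argument used for Theorem~\ref{theorem:ratioasy} (in the spirit of Kuijlaars--Van Assche \cite{KVA} and Dette--Reuther \cite{Dette}) to the slowly varying setting. Set $R_{n,N}(x) := P_{n,N}(x)P_{n+1,N}^{-1}(x)$; since all zeros of $\det P_{n,N}$ lie in $[-M,M]$, each $R_{n,N}$ is analytic on $\cee\setminus[-M,M]$. Right-multiplying \eqref{recurrence:N} by $P_{n+1,N}^{-1}(x)$ and rearranging gives the matrix Riccati-type recurrence
\begin{equation}\label{PLAN:Riccati}
A_{n,N}^{*}\, R_{n-1,N}(x) + B_{n,N} + A_{n+1,N}\, R_{n,N}^{-1}(x) = x\, I_r.
\end{equation}

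First I would verify that, along any sequence $(n_k,N_k)$ with $n_k/N_k\to s$, the family $\{R_{n_k,N_k}\}$ is a normal family on $\cee\setminus[-M,M]$. Local boundedness follows from the zero localization together with the uniform boundedness of $\{A_{n,N},B_{n,N}\}$ for $n/N$ in a neighborhood of $s$, which is a consequence of the assumed continuity of $s\mapsto(A_s,B_s)$. Let $R_\infty(x)$ be any subsequential uniform limit. Since $(n-1)/N\to s$ and $(n+1)/N\to s$ as well, the coefficients in \eqref{PLAN:Riccati} converge respectively to $A_s^{*}$, $B_s$ and $A_s$; moreover $R_{n-1,N}$ has the same subsequential limit as $R_{n,N}$, by the same normal-family reasoning applied to consecutive indices. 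Passing to the limit in \eqref{PLAN:Riccati} yields
\begin{equation}\label{PLAN:limit}
A_s^{*}\, R_\infty(x) + B_s + A_s\, R_\infty^{-1}(x) = x\, I_r,
\end{equation}
which is precisely \eqref{R:matrix:relation} with $(A,B)$ replaced by $(A_s,B_s)$.

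Next I would identify $R_\infty$ uniquely. Exactly as in the proof of Theorem~\ref{theorem:ratioasy}, the eigenvalues of any matrix satisfying \eqref{PLAN:limit} must be roots of the quadratic eigenvalue problem $f_s(z,x)=0$; and the normalization $R_{n,N}(x) = \tfrac{1}{x}I_r + O(x^{-2})$ as $x\to\infty$, inherited from the leading-coefficient structure of the $P_{n,N}$, forces these eigenvalues to be the $r$ smallest-modulus roots $z_1(x,s),\ldots,z_r(x,s)$, with corresponding eigenvectors $\vecv_k(x,s)$ from \eqref{vector:v}. Since every subsequential limit coincides, the full limit $\lim_{n/N\to s}R_{n,N}(x)$ exists on $\cee\setminus([-M,M]\cup S_s)$, with uniform convergence on compacta; this is the first assertion of the theorem.

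The determinant statement then follows by taking determinants of $R_{n,N}\to R_\infty$, giving pointwise convergence to $z_1(x,s)\cdots z_r(x,s)$ on $\cee\setminus([-M,M]\cup S_s)$. Since both sides are analytic on $\cee\setminus[-M,M]$ (the right-hand side by Lemma~\ref{lemma:DetteReuther}) and the left-hand side is locally bounded there, the convergence extends to all of $\cee\setminus[-M,M]$ by Vitali's theorem. The main obstacle I anticipate is the passage to the limit in \eqref{PLAN:Riccati}: in contrast to the Nevai case the coefficients are genuinely moving with $n,N$, so one must carefully exploit the continuity of $s\mapsto(A_s,B_s)$ to guarantee that $A_{n,N}$, $A_{n+1,N}$ and $B_{n,N}$ share the common limits $A_s,A_s,B_s$ along the subsequence, and simultaneously control the difference $R_{n,N}-R_{n-1,N}$ so that \eqref{PLAN:limit} emerges cleanly — precisely the step where the normal-family framework (rather than the generalized Poincar\'e theorem alluded to in Section~\ref{subsection:N}) is indispensable.
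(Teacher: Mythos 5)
Your proposal attacks the problem with the right tools (normal families via Lemma~\ref{lemma:DetteReuther} and the matrix Riccati relation obtained from the three-term recurrence), but it contains a genuine gap at the central step. When you pass to the limit in
\begin{equation*}
A_{n,N}^{*}\, R_{n-1,N}(x) + B_{n,N} + A_{n+1,N}\, R_{n,N}^{-1}(x) = x\, I_r
\end{equation*}
you assert that \emph{``$R_{n-1,N}$ has the same subsequential limit as $R_{n,N}$, by the same normal-family reasoning applied to consecutive indices.''} This does not follow: applying normality to the shifted family only tells you that $\{R_{n-1,N}\}$ is itself normal and hence has subsequential limits, not that a subsequence along which $R_{n,N}\to R_\infty$ also carries $R_{n-1,N}\to R_\infty$. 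If you pass to a further subsequence, you only obtain $A_s^* R'_\infty + B_s + A_s R_\infty^{-1}=xI_r$ with a possibly different $R'_\infty$, and iterating produces an infinite regress of distinct unknowns rather than the closed relation \eqref{PLAN:limit}. Equality of consecutive subsequential limits is essentially equivalent to the convergence you are trying to prove, so assuming it is circular. You flag exactly this as the ``main obstacle'' at the end, but the plan as stated does not resolve it.

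The paper's proof of Theorem~\ref{theorem:ratioasy} (which, per Section~\ref{subsection:proof:N}, is carried over verbatim with $\lim_{n\to\infty}$ replaced by $\lim_{n/N\to s}$) resolves this with a bootstrapping induction on the order of the asymptotic expansion at $x=\infty$: the induction hypothesis \eqref{induction:hypothesis} is asserted simultaneously for \emph{every} subsequence along which the limit exists, and is an estimate $\lim_i P_{n_i}P_{n_i+1}^{-1}\vecv_k = z_k\vecv_k + O(x^{-l})$ rather than an exact identity. In the induction step one passes to a further sub-subsequence where both $R_{n_i,N_i}$ and $R_{n_i-1,N_i}$ converge, applies the hypothesis at order $l$ to the shifted sequence (legitimately, since it is hypothesized for all convergent subsequences), and uses the Riccati relation together with the bound \eqref{polar:identity} and $z_k(x)=O(x^{-1})$ to improve the error to $O(x^{-l-2})$. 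This quantified-over-subsequences induction is precisely what breaks the circularity you anticipate, and it is the ingredient missing from your plan. A secondary point: you would also need to justify that the Riccati equation \eqref{PLAN:limit} together with the $O(x^{-1})$ asymptotics pins down $R_\infty$ uniquely among analytic matrix functions on $\cee\setminus[-M,M]$; the paper sidesteps this by never leaving the asymptotic-expansion framework.
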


\begin{theorem}\label{theorem:zerodistr:N}
Under the same assumptions as in Theorem~\ref{theorem:ratioasy:N}, the
normalized zero counting measure of $\det P_{n,N}(x)$ for $n/N\to s$ has a weak
limit $\mu_{0,s}$, with logarithmic potential given by
\begin{equation}\label{limiting:measure:mu0:N}
\int \log|x-t|^{-1}\ \ud\mu_{0,s}(t) = \frac{1}{rs}\int_{0}^s
\log|z_1(x,u)\ldots z_r(x,u)|\, \ud u+C_s,\qquad
x\in\cee\setminus\bigcup_{0\leq u\leq s}\Gamma_0(u),
\end{equation}
for some explicit constant $C_s$.
%which is given as the integral (average) over $t\in [0,s]$ of the individual
%limiting measures $\mu_1^t$:
%\begin{equation}\label{mu0:N}
%\mu_0 = \int_{0}^s\mu_1^t\, dt,
%\end{equation}
%in the sense of weak convergence of measures. It is supported on the set
%$\Gamma_0:=\bigcup_{t\in [0,s]}\Gamma_0(t)\subset\er$.
\end{theorem}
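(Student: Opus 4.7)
The plan is to deduce this from the ratio asymptotics in Theorem~\ref{theorem:ratioasy:N} via the classical identity linking the logarithmic potential of a polynomial's zero counting measure to the logarithm of its modulus. Writing $\det P_{n,N}(x)=c_{n,N}\prod_{k=1}^{rn}(x-x_k)$ with $c_{n,N}=\prod_{j=1}^{n}(\det A_{j,N})^{-1}$ and $\{x_k\}$ the zeros with multiplicities, one obtains
\begin{equation*}
\int \log|x-t|^{-1}\,\ud\nu_{n,N}(t)=-\frac{1}{rn}\log|\det P_{n,N}(x)|+\frac{1}{rn}\log|c_{n,N}|.
\end{equation*}
The task is to pass to the limit $n/N\to s$ on the right hand side, for each fixed $x\in\cee\setminus\bigcup_{0\le u\le s}\Gamma_0(u)$.

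For the first term I would telescope $\log|\det P_{n,N}(x)|=-\sum_{k=0}^{n-1}\log|\det P_{k,N}(x)/\det P_{k+1,N}(x)|$ (using $\det P_{0,N}\equiv 1$). The determinantal ratio limit in Theorem~\ref{theorem:ratioasy:N} says that the $k$-th summand tends to $\log|z_1(x,u)\cdots z_r(x,u)|$ whenever $k/N\to u$. Recognizing the prefactor $\frac{1}{n}$ together with the partition $\{k/N\}$ of $[0,s]$ as a Riemann sum of mesh $1/N$ with overall prefactor $N/n\to 1/s$, I would conclude
\begin{equation*}
\frac{1}{rn}\log|\det P_{n,N}(x)|\ \longrightarrow\ -\frac{1}{rs}\int_{0}^{s}\log|z_1(x,u)\cdots z_r(x,u)|\,\ud u.
\end{equation*}
A parallel Riemann sum argument applied to $\frac{1}{rn}\log|c_{n,N}|=-\frac{1}{rn}\sum_{j=1}^{n}\log|\det A_{j,N}|$, combined with continuity of $u\mapsto A_u$, gives the finite limit $C_s=-\frac{1}{rs}\int_{0}^{s}\log|\det A_u|\,\ud u$. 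Combining these two limits produces the identity \eqref{limiting:measure:mu0:N}.

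To upgrade this pointwise convergence of logarithmic potentials to weak convergence of $\nu_{n,N}$, I would first note that continuity of $A_u,B_u$ on a neighborhood of $[0,s]$ implies uniform operator norm bounds on $A_{n,N},B_{n,N}$ as long as $n/N$ stays in a compact set, and hence all eigenvalues of $J_{n,N}$ lie in a common compact interval $[-M,M]$; tightness is automatic. Any subsequential weak limit must then be a probability measure supported in $[-M,M]$ whose logarithmic potential off $\bigcup_{0\le u\le s}\Gamma_0(u)$ agrees with the right hand side of \eqref{limiting:measure:mu0:N}. Since a compactly supported measure on $\er$ is uniquely determined by its logarithmic potential on the complement of its support \cite{SaffTotik}, all subsequential limits coincide with the unique $\mu_{0,s}$, and the full sequence converges weakly.

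The main technical obstacle will be making the Riemann sum convergence precise. Theorem~\ref{theorem:ratioasy:N} as stated only supplies, for each fixed $s$, the limit $n/N\to s$, whereas the Riemann sum needs a form of uniformity as the parameter $u=k/N$ sweeps across $[0,s]$. I would expect this uniformity to follow from continuous dependence of $A_u,B_u$ on $u$ together with the hypothesis $x\notin\bigcup_{0\le u\le s}\Gamma_0(u)$, which keeps the split between $|z_r(x,u)|$ and $|z_{r+1}(x,u)|$ bounded away from $1$ uniformly in $u$; concretely, one would revisit the normal family / quadratic eigenvalue argument behind Theorem~\ref{theorem:ratioasy:N} and extract the required joint uniformity in $(u,x)$ on compact sets.
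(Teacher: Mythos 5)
Your proposal is correct and follows essentially the same route as the paper: telescope $\det P_{n,N}$ into a product of determinantal ratios, take logarithms normalized by $rn$, and use the ratio asymptotics of Theorem~\ref{theorem:ratioasy:N} to turn the resulting average into the $u$-integral over $[0,s]$. The paper treats this extension in a single sentence (Section~\ref{subsection:proof:N}: the details are ``straightforward and left to the reader''), so the added care you supply --- tracking the normalization constant $c_{n,N}$ and its limit $C_s$, the tightness and potential-theoretic uniqueness step, and especially the observation that the pointwise ratio limit of Theorem~\ref{theorem:ratioasy:N} must be upgraded to a locally uniform limit in $u=k/N$ before the Riemann-sum passage is legitimate --- is exactly the work the paper leaves implicit.
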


In other words, $\mu_{0,s}$ is precisely the average (or integral) of the
individual limiting measures for fixed $u$, integrated over $u\in [0,s]$.
%In the statement of the theorem we will probably need some continuity
%requirement in $s$ (\ldots)

Theorems~\ref{theorem:ratioasy:N} and \ref{theorem:zerodistr:N} are proved in
Section~\ref{subsection:proof:N} and an application of these results in the
context of random band matrices will be given in Section \ref{random}.

\subsection{Asymptotically periodic recurrence coefficients}
\label{subsection:p}

In this section we consider a second type of generalization of the Nevai class.
We will assume that the matrices $A_n$ and $B_n$ have \emph{periodic} limits
with period $p\in\zet_{>0}$ in the sense that
\begin{equation}\label{Nevai:class:period}
\lim_{n\to\infty} A_{pn+j} =: A^{(j)},\qquad \lim_{n\to\infty} B_{pn+j} =:
B^{(j)},
\end{equation}
for any fixed $j=0,1,\ldots,p-1$.

The role that was previously played by $zA^* +B+z^{-1}A-xI_r$ will now be
played by the matrix
\begin{equation}\label{symbol:p}
F(z,x):=\begin{pmatrix} B^{(0)}-xI_r & A^{(1)} & 0 & 0 & z A^{(0)*} \\
A^{(1)*} & B^{(1)}-xI_r & A^{(2)} & 0 & 0 \\
0 & A^{(2)*} & \ddots & \ddots & 0 \\
0 & 0 & \ddots & \ddots & A^{(p-1)}\\
z^{-1} A^{(0)}  & 0 & 0 & A^{(p-1)*} & B^{(p-1)}-xI_r
\end{pmatrix}_{pr\times pr},
\end{equation}
{where we abbreviate $A^{(j)*}:=(A^{(j)})^*$}. We define
\begin{equation}\label{algebraic:equation:p} f(z,x):=\det F(z,x).\end{equation}
This can be expanded as a Laurent polynomial in $z$:
\begin{equation}\label{algebraic:equation:Laurent:bis} f(z,x)= \sum_{k=-r}^r f_k(x)z^k,
\end{equation}
where now the outermost coefficients $f_r(x)$, $f_{-r}(x)$ are given by
$$ f_{r}(x)\equiv f_{r} = (-1)^{(p-1)r^2}\det(A^{(0)*}\ldots A^{(p-1)*}),\qquad f_{-r}(x)\equiv f_{-r} = (-1)^{(p-1)r^2}\det(A^{(0)}\ldots A^{(p-1)}).
$$
We again order the roots $z_k(x)$, $k=1,\ldots,2r$ of
\eqref{algebraic:equation:Laurent:bis} as in \eqref{rootszk:ordering}. We
define $\Gamma_0$ as in \eqref{Gamma0}. Proposition~\ref{prop:Gamma0} now takes
the following form:

\begin{proposition} %(Structure of $\Gamma_0$.)
\label{prop:Gamma0:p} $\Gamma_0$ is a subset of the real line. It is compact
and can be written as the disjoint union of at most $pr$ intervals.
\end{proposition}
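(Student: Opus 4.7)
The strategy parallels the proof of Proposition~\ref{prop:Gamma0}: one exploits a Hermitian-like symmetry of the enlarged symbol $F(z,x)$ to reduce the statement to the spectral analysis of a Hermitian matrix pencil on the unit circle. Inspecting the blocks of \eqref{symbol:p} and using that each $B^{(j)}$ is Hermitian, one verifies the identity
$$F(z,x)^* = F(1/\bar z, \bar x),$$
and consequently $f(z,x) = \overline{f(1/\bar z, \bar x)}$. The $2r$ roots $\{z_k(x)\}$ in $z$ are therefore mapped bijectively to the $2r$ roots $\{z_k(\bar x)\}$ via $z \leftrightarrow 1/\bar z$. Combined with $|f_r|=|f_{-r}|$, which gives $\prod_k |z_k(x)| = 1$ for all $x$, one obtains for real $x$ the pairing $|z_k(x)|\cdot|z_{2r+1-k}(x)| = 1$, and in particular $|z_r(x)|\cdot|z_{r+1}(x)|=1$.

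The main claim is the identification $\Gamma_0 = \tilde \Gamma_0$, where
$$\tilde \Gamma_0 := \bigcup_{|z|=1}\sigma\bigl(F(z,0)\bigr) = \{x \in \cee \mid \exists\, z \text{ with } |z|=1 \text{ and } f(z,x)=0\}.$$
Since $F(e^{i\theta},0)$ is a Hermitian $pr\times pr$ matrix depending real-analytically on $\theta \in [0,2\pi]$, its $pr$ continuous eigenvalue branches $\lambda_j(\theta)$ are real valued, and each sweeps out a compact interval as $\theta$ runs over the circle. Hence $\tilde \Gamma_0$ is automatically a compact subset of $\er$ that is a union of at most $pr$ closed intervals, and after merging overlaps it is a disjoint union of at most $pr$ intervals. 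All three conclusions of the proposition then follow from $\Gamma_0 = \tilde \Gamma_0$.

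The inclusion $\tilde \Gamma_0 \subset \Gamma_0$ is immediate: any such $x$ is real (eigenvalue of a Hermitian matrix), and the relation $|z_r(x)|\cdot|z_{r+1}(x)|=1$ together with the existence of a unit-modulus root forces $|z_r(x)|=|z_{r+1}(x)|=1$. For the reverse inclusion $\Gamma_0 \subset \tilde \Gamma_0$, I would argue by continuation on $\cee \setminus \tilde \Gamma_0$. Because $\tilde \Gamma_0$ is a compact subset of $\er$, the complement $\cee \setminus \tilde \Gamma_0$ is open and connected. On this complement no root of $f(\cdot,x)$ lies on the unit circle, so the unordered partition of the root set into ``inside'' and ``outside'' the unit disk is locally constant, even across branch points of the discriminant of $f(z,x)$ in $z$. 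As $|x|\to\infty$, the polynomial $z^r f(z,x)$ has constant leading and bottom coefficients $f_r,f_{-r}$ while its middle coefficient $f_0(x)$ has degree $pr$; a standard balance analysis then yields $r$ roots with $z^r \sim -f_0(x)/f_r$ (tending to $\infty$) and $r$ roots with $z^r \sim -f_{-r}/f_0(x)$ (tending to $0$). Hence near infinity the split is $(r,r)$, and by connectedness it remains $(r,r)$ throughout $\cee \setminus \tilde \Gamma_0$. This gives $|z_r(x)|<1<|z_{r+1}(x)|$ on the complement, forcing $\Gamma_0 \subset \tilde \Gamma_0$.

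The main obstacle is the locally-constant statement in the last paragraph: one must verify that the unordered ``inside/outside'' partition survives every discriminant branch point, i.e.\ that no single root can cross $|z|=1$ without the corresponding $x$ entering $\tilde \Gamma_0$. Once this is carefully formulated, connectedness of the complement together with the known asymptotic split at infinity closes the argument.
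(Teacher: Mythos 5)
Your proposal is correct and follows the same basic scheme as the paper's proof of Proposition~\ref{prop:Gamma0} (which the paper carries over to the periodic case without restating it), but with two self-contained upgrades worth noting. First, you prove in one stroke both that $\Gamma_0\subset\er$ and the identification $\Gamma_0=\tilde\Gamma_0$; the paper separates these, establishing $\Gamma_0\subset\er$ via the Hermiticity observation and then $\Gamma_0=\tilde\Gamma_0$ (in the $p=1$ case) afterwards in Proposition~\ref{prop:Gamma0:bis}. Second, for the ``at most $pr$ intervals'' claim the paper cites the block Toeplitz literature, whereas you derive it directly from the $pr$ continuous real eigenvalue branches $\theta\mapsto\lambda_j(\theta)$ of the Hermitian family $F(e^{i\theta},0)$, each of whose images is a compact interval; this is cleaner and makes the count $pr$ transparent. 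The symmetry $F(z,x)^*=F(1/\bar z,\bar x)$ and the resulting pairing $|z_k(x)|\,|z_{2r+1-k}(x)|=1$ for real $x$, as well as the Newton-polygon balance giving the $(r,r)$ inside/outside split at infinity (the coefficient $f_0(x)$ alone has degree $pr$ in $x$), are all correct. Finally, the obstacle you flag at the end is not a genuine gap: on $\cee\setminus\tilde\Gamma_0$ no root lies on $|z|=1$, so the number of roots of $z^rf(z,x)$ (with multiplicity) strictly inside the unit disk is the winding integral $\frac{1}{2\pi i}\oint_{|z|=1}\partial_z\log\bigl(z^rf(z,x)\bigr)\,\ud z$, which is continuous and integer-valued in $x$, hence locally constant; this argument is insensitive to collisions of roots at discriminant branch points as long as none touch the circle, so connectedness of $\cee\setminus\tilde\Gamma_0$ finishes the proof exactly as you intend.
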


As before we denote with $\vecv_k(x)$, $k=1,\ldots,2r$, the normalized null
space vectors of the matrix $F(z_k(x),x)$. We partition these vectors in blocks
as
\begin{equation}\label{nullspacevector} \vecv_{k}(x) =
\begin{pmatrix}
\vecv_{k,0}(x)\\ \vdots \\ \vecv_{k,p-1}(x)
\end{pmatrix},
\end{equation}
where each $\vecv_{k,j}(x)$, $j=0,1,\ldots,p-1$, is a column vector of length
$r$. Lemma~\ref{lemma:alggeomult} remains true in the present setting.

 %For large values of $x$, we have that each of the blocks $\vecv_{j,k}(x)$
% has comparable norm by virtue of $y\sim x^{-1}$ and .

Now we can state the analogues of Theorems~\ref{theorem:ratioasy} and
\ref{theorem:zerodistr}.

\begin{theorem}\label{theorem:ratioasy:p}
For all $x\in\cee\setminus ([-M,M]\cup S)$ and any $j\in\{0,1,\ldots,p-1\}$,
the limiting $r\times r$ matrix
$$ \lim_{n\to\infty} P_{pn+j}(x)P_{pn+j+p}^{-1}(x)$$ exists entrywise and is diagonalizable,
with
\begin{equation}\label{ratio:asy:p:weak}
\left(\lim_{n\to\infty} P_{pn+j}(x)P_{pn+j+p}^{-1}(x)\right)\vecv_{k,j}(x) =
z_k(x)\vecv_{k,j}(x),\end{equation} uniformly for $k\in\{1,\ldots,r\}$ and for
$x$ in compact subsets of $\cee\setminus ([-M,M]\cup S)$. Moreover,
\begin{equation} \lim_{n\to\infty} \frac{\det P_{n}(x)}{\det P_{n+p}(x)} =
z_1(x)\ldots z_r(x),\end{equation} uniformly for $x$ in compact subsets of
$\cee\setminus [-M,M]$.
\end{theorem}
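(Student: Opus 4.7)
The plan is to follow the blueprint of Theorem~\ref{theorem:ratioasy}, replacing single-step ratios by $p$-step ratios and the single-block pencil $zA^*+B+z^{-1}A-xI_r$ by the periodic-block symbol $F(z,x)$ in \eqref{symbol:p}. For each $j\in\{0,1,\ldots,p-1\}$, I introduce
$$R_n^{(j)}(x) := P_{pn+j}(x)P_{pn+j+p}^{-1}(x),\qquad x\in\cee\setminus[-M,M].$$
The first step would be to show that each $\{R_n^{(j)}\}_n$ is a normal family on $\cee\setminus[-M,M]$. Since all poles of $R_n^{(j)}$ (zeros of $\det P_{pn+j+p}$) lie in $[-M,M]$, uniform bounds on compact subsets of $\cee\setminus[-M,M]$ follow from Cauchy estimates combined with the three-term recurrence, in exact parallel to the matrix Nevai case from Section~\ref{subsection:ratioasy}.

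The crux is to identify any subsequential limit. The key reformulation of $F(z,x)$ is that a vector $\vecv\in\cee^{pr}$ with block components $\vecv_0,\ldots,\vecv_{p-1}\in\cee^r$ lies in the kernel of $F(z,x)$ if and only if the ansatz $\phi_{pn+j}:=z^{-n}\vecv_j$ satisfies the recurrence \eqref{recurrence} at every index, in the periodic limit \eqref{Nevai:class:period}; the corner entries $zA^{(0)*}$ and $z^{-1}A^{(0)}$ in \eqref{symbol:p} are precisely what is needed to reconcile the shift at the block boundary $pn\leftrightarrow pn-1$. Extracting a subsequence along which $R_n^{(j)}\to R^{(j)}$ for every $j$ and iterating \eqref{recurrence} across $p$ consecutive steps, I then obtain
$$R^{(j)}(x)\vecv_{k,j}(x)=z_k(x)\vecv_{k,j}(x),\qquad k=1,\ldots,r,$$
where $\vecv_{k,j}(x)$ are the blocks of a kernel vector of $F(z_k(x),x)$ as in \eqref{nullspacevector}. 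To rule out large-modulus roots $z_{r+1},\ldots,z_{2r}$ as eigenvalues of $R^{(j)}(x)$, I would use the boundedness of $R_n^{(j)}$ on compacts of $\cee\setminus[-M,M]$: a mode with $|z_k|>|z_r|$ is the fastest growing direction of $P_{pn+j}$ and would force the ratio to blow up, contradicting the normal-family bound. Lemma~\ref{lemma:alggeomult}, which continues to hold in the periodic setting by the Hermitian structure of $F(z,x)$ on $|z|=1$, handles higher multiplicities so that the $\vecv_{k,j}(x)$ span the correct subspaces. Uniqueness of the subsequential limit then gives full convergence, uniformly on compacts of $\cee\setminus([-M,M]\cup S)$.

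Taking determinants yields
$$\lim_{n\to\infty}\frac{\det P_{pn+j}(x)}{\det P_{pn+j+p}(x)} = z_1(x)\cdots z_r(x),$$
independent of $j$, which immediately gives the stated limit for $\det P_n(x)/\det P_{n+p}(x)$ as $n$ runs through all residues mod $p$; the extension from $\cee\setminus([-M,M]\cup S)$ to $\cee\setminus[-M,M]$ is provided by the analogue of Lemma~\ref{lemma:DetteReuther}. The hardest part will be the row-by-row verification that the ansatz $\phi_{pn+j}=z^{-n}\vecv_j$ matches \eqref{symbol:p} at the block boundary, together with establishing the block analogue of Lemma~\ref{lemma:alggeomult}; both are essentially bookkeeping exercises but require careful treatment of the periodic boundary shift and of degenerate configurations where $F(z_k,x)$ has a multi-dimensional null space.
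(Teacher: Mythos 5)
Your strategy — normal families plus passing to the limit in the recurrence, with the periodic symbol $F(z,x)$ encoding the shift — is the right general blueprint, but there are two places where the proposal as written has real gaps, and one place where the paper takes an importantly different route.

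First, you set up the normal-family argument for the $p$-step ratios $R_n^{(j)}=P_{pn+j}P_{pn+j+p}^{-1}$, yet the limit identification that you then invoke (``iterating \eqref{recurrence} across $p$ consecutive steps'') is carried out through the intermediate indices $pn+j+1,\ldots,pn+j+p-1$, i.e.\ it requires convergence of the \emph{single-step} ratios $P_{pn+j+i-1}P_{pn+j+i}^{-1}$. Subsequential convergence of the $p$-step ratios does not give you that; you need Montel's theorem applied to the one-step ratios along each residue class $j\bmod p$. This is exactly what the paper does: it proves the stronger one-step statement
\[
\Bigl(\lim_{n\to\infty}P_{pn+j}(x)P_{pn+j+1}^{-1}(x)\Bigr)\vecw_{k,j+1}(x)=y_k(x)\vecw_{k,j}(x),
\]
with $y_k=z_k^{1/p}$, and then obtains Theorem~\ref{theorem:ratioasy:p} by composing $p$ of these. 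The substitution $z=y^p$ and the conjugated symbol $G(y,x)=\diag(1,y,\ldots,y^{p-1})F(y^p,x)\diag(1,y^{-1},\ldots,y^{-(p-1)})$ from \eqref{Gyx}--\eqref{Gyx:bis} is not cosmetic: it makes the single-step growth rate $y_k(x)\propto x^{\pm1}$ at infinity visible and makes the null-space block vectors $\vecw_{k,j}$ uniformly comparable in norm, which is what the induction step manipulates. Your proposal never passes to this one-step picture and so cannot complete the limit identification as stated.

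Second, your argument for why the subsequential limit has only the $r$ small roots as eigenvalues — ``a mode with $|z_k|>|z_r|$ would force the ratio to blow up'' — is a heuristic, not a proof. The paper turns this into an induction on the order $l$ of the Laurent expansion at $\infty$: one first gets $O(x^0)$ from the Dette--Reuther bound (Lemma~\ref{lemma:DetteReuther} and its polarization), then bootstraps via the recurrence and $y_k(x)=O(x^{-1})$ to improve the error to $O(x^{-l-2})$ at each step. This is what uniquely pins down both the eigenvalues and the eigenvectors of the subsequential limit and forces convergence of the full sequence. Your sketch omits this inductive core and, without it, the identification of $R^{(j)}$ with the correct diagonalizable matrix is unsupported. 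Finally, note also that the paper's route yields a genuinely stronger conclusion (\eqref{ratio:asy:p:strong}, one-step convergence with $j$-dependent eigenvectors and $j$-independent eigenvalues $y_k$), which your $p$-step approach would not recover even if repaired.
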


Theorem~\ref{theorem:ratioasy:p} is proved in Section~\ref{subsection:proof:p},
where we will establish in fact a stronger variant \eqref{ratio:asy:p:strong}
of \eqref{ratio:asy:p:weak}. The proof will use some ideas from \cite{BDK}.
Note that the eigenvectors in \eqref{ratio:asy:p:weak} depend on the residue
class modulo $p$, $j\in\{0,1,\ldots,p-1\}$, while the eigenvalues are
independent of $j$.

\begin{theorem}\label{theorem:zerodistr:p}
Under the same assumptions as in Theorem~\ref{theorem:ratioasy:p}, the
normalized zero counting measure of $\det P_{n}(x)$ for $n\to \infty$ has a
weak limit $\mu_0$, supported on $\Gamma_0$, with logarithmic potential given
by
\begin{equation}\label{limiting:measure:mu0:p}
\int \log|x-t|^{-1}\ \ud\mu_0(t) = \frac{1}{pr}\log|z_1(x)\ldots
z_r(x)|+C,\qquad x\in\cee\setminus\Gamma_0,
\end{equation}
for some explicit constant $C$ (actually
$C=-\frac{1}{pr}\log|\det(A^{(0)}\ldots A^{(p-1)})|$.)
%which is given as the integral (average) over $t\in [0,s]$ of the individual
%limiting measures $\mu_1^t$:
%\begin{equation}\label{mu0:N}
%\mu_0 = \int_{0}^s\mu_1^t\, dt,
%\end{equation}
%in the sense of weak convergence of measures. It is supported on the set
%$\Gamma_0:=\bigcup_{t\in [0,s]}\Gamma_0(t)\subset\er$.
\end{theorem}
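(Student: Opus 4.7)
My plan is to parallel the proof of Theorem~\ref{theorem:zerodistr}, using Theorem~\ref{theorem:ratioasy:p} in place of Theorem~\ref{theorem:ratioasy}. Since $J_n$ is Hermitian and all zeros of $\det P_n(x)$ lie in $[-M,M]$, the family $(\nu_n)$ is tight, so Helly's selection theorem guarantees that every subsequence contains a further weakly convergent subsequence $\nu_{n_k}\to \mu$ with $\supp\mu \subset [-M,M]$. Since a compactly supported measure on $\er$ is uniquely determined by its logarithmic potential outside its support, it suffices to show that every subsequential limit $\mu$ has the prescribed potential~\eqref{limiting:measure:mu0:p}; that will force $\nu_n \to \mu_0$ weakly, after which I will treat the support claim separately.

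The key computation starts from $\det P_n(x) = c_n \prod_{k=1}^{rn}(x-x_k^{(n)})$ with $c_n = \det(A_n^{-1}\cdots A_1^{-1})$, which rearranges to
\begin{equation}\label{eq:pz:id}
\frac{1}{rn}\log|\det P_n(x)| = \frac{1}{rn}\log|c_n| - U^{\nu_n}(x),\qquad x\in\cee\setminus[-M,M],
\end{equation}
where $U^{\nu_n}(x):=\int \log|x-t|^{-1}\,\ud\nu_n(t)$. I handle the two terms on the right separately. Since $A_{pn+j}\to A^{(j)}$, the real sequence $\log|\det A_k|$ is asymptotically periodic with period $p$, and Ces\`aro averaging yields
\begin{equation}\label{eq:pz:ccoef}
\frac{1}{rn}\log|c_n| \to -\frac{1}{rp}\log|\det(A^{(0)}\cdots A^{(p-1)})| =: C.
\end{equation}
Theorem~\ref{theorem:ratioasy:p} gives $\det P_n(x)/\det P_{n+p}(x)\to z_1(x)\cdots z_r(x)$ uniformly on compacta of $\cee\setminus[-M,M]$, and applying a telescoping Ces\`aro argument residue-class-wise modulo $p$ (the limit being $j$-independent), I expect to conclude
\begin{equation}\label{eq:pz:det}
\frac{1}{rn}\log|\det P_n(x)| \to -\frac{1}{rp}\log|z_1(x)\cdots z_r(x)|,
\end{equation}
uniformly on compact subsets of $\cee\setminus[-M,M]$.

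Combining \eqref{eq:pz:id}--\eqref{eq:pz:det} will give $U^{\nu_n}(x) \to \frac{1}{rp}\log|z_1(x)\cdots z_r(x)| + C$ for every $x\in\cee\setminus[-M,M]$. For a weak limit $\mu$ of a subsequence $(\nu_{n_k})$, the continuity of $t\mapsto \log|x-t|^{-1}$ on $[-M,M]$ (whenever $x\notin[-M,M]$) yields $U^{\nu_{n_k}}(x)\to U^\mu(x)$, so $U^\mu$ coincides with this explicit right-hand side on $\cee\setminus[-M,M]$. By unicity of measures via logarithmic potentials, $\mu = \mu_0$, and hence the whole sequence converges weakly to $\mu_0$. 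For the support claim, I will observe that on $\cee\setminus\Gamma_0$ the strict inequality $|z_r(x)|<|z_{r+1}(x)|$ holds, so $z_1(x)\cdots z_r(x)$ extends to a single-valued holomorphic function there (the ``inner factor'' in the spectral factorization of $f(z,x)$). Consequently $U^{\mu_0}$ is harmonic on $\cee\setminus\Gamma_0$, and the distributional identity $-\frac{1}{2\pi}\Delta U^{\mu_0} = \mu_0$ yields $\supp\mu_0 \subset \Gamma_0$.

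The main technical hurdle will be~\eqref{eq:pz:det}: the ratio asymptotics of Theorem~\ref{theorem:ratioasy:p} is a $p$-step statement, so I must perform Ces\`aro averaging residue-class-wise and then patch the limits together uniformly on compacta, which requires the $o(n)$ error terms to be uniform in both $x$ and the starting residue; this ultimately rests on the uniformity clause in Theorem~\ref{theorem:ratioasy:p}. The stronger version of \eqref{limiting:measure:mu0:p} without absolute value signs (alluded to after Theorem~\ref{theorem:zerodistr}) can then be obtained by repeating the entire argument with a fixed analytic branch of $\log\det P_n(x)$ on $\cee\setminus[-M,M]$, which exists since $\det P_n$ has no zeros there; the limit then holds as an analytic identity rather than merely an identity between real parts.
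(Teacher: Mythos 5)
Your proposal follows the same route the paper takes for Theorem~\ref{theorem:zerodistr}: write $\det P_n$ as a telescoping product, take $\frac{1}{rn}\log|\cdot|$, apply Ces\`aro averaging together with the determinant-ratio limit $\det P_n/\det P_{n+p}\to z_1\cdots z_r$ from Theorem~\ref{theorem:ratioasy:p}, and identify the negated limit with the logarithmic potential of $\mu_0$ plus a constant fixed by the $x\to\infty$ behavior. The paper itself never writes out a separate proof for the periodic version, so you are in effect supplying exactly the adaptation the authors leave implicit; the extra scaffolding you include (Helly/tightness and unicity of potentials, explicit identification of $C$ via $c_n=\det(A_n^{-1}\cdots A_1^{-1})$, residue-class-wise telescoping, the distributional-Laplacian argument for $\supp\mu_0\subset\Gamma_0$ using that $U^{\mu_0}$ agrees a.e.\ on $\cee$ with the harmonic function $\frac{1}{pr}\log|z_1\cdots z_r|+C$) is all sound and just makes explicit what the paper asserts without detail.
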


Finally we point out that the results in the present section can be combined
with those in Section~\ref{subsection:N}. That is, one could allow for slowly
varying recurrence coefficients $A_{n,N}$, $B_{n,N}$ with periodic local limits
of period $p$. The corresponding modifications are straightforward and are left
to the interested reader.

\section{Formulas for matrix Chebychev polynomials}
\label{section:Cheb}

Throughout this section we let $A$ be a fixed nonsingular, and $B$ a fixed
Hermitian $r\times r$ matrix. {The matrix Chebyshev polynomials of the
second kind are defined from the recursion \eqref{recurrence}, with the
standard initial conditions $P_0(x)\equiv I_r$, $P_{-1}(x)\equiv 0$, and with
constant recurrence coefficients $A_n\equiv A$ and $B_n\equiv B$ for all $n$.
The matrix Chebyshev polynomials of the first kind are defined in the same way,
but now with $A_1=\sqrt{2}A$ and $A_n=A$ for all $n\geq 2$.}

Let $X$ and $W$ be the spectral measures for the matrix Chebychev polynomials
of the first and second kind respectively, normalized in such a way that
$$ \int_{\er} \ud X = \int_{\er} \ud W = I_r,
$$
as in \cite{Duran,DLS}. Here the integrals are taken entrywise. Denote the
corresponding Stieltjes transforms (or Cauchy transforms) by
$$ F_X(x)=\int \frac{\ud X(t)}{x-t},\qquad F_W(x)=\int \frac{\ud W(t)}{x-t}.
$$
%\begin{eqnarray*}
%F_X(x)&=&\int \frac{\ud X(t)}{x-t},\\ F_W(x)&=&\int \frac{\ud W(t)}{x-t}.
%\end{eqnarray*}

Theorem~\ref{theorem:ratioasy} can be reformulated as
\begin{equation}\label{ratio:asy:VD} \lim_{n\to\infty} P_{n}(x)P_{n+1}^{-1}(x)
= V(x)D(x)V^{-1}(x),\qquad x\in\cee\setminus ([-M,M]\cup S),
\end{equation}
where $D(x)$ is the diagonal matrix with entries $z_k(x)$, $k=1,\ldots,r$ and
$V(x)$ is the matrix whose columns are the corresponding vectors $\vecv_k(x)$
in \eqref{vector:v}.

In turns out that the Stieltjes transforms $F_W$ and $F_X$ can be expressed
{in terms of the matrices $D=D(x)$ and $V=V(x)$ as well}.

\begin{proposition}\label{prop:Cheb12}
%The spectral measures $X$ and $W$ for
%the matrix Chebyshev polynomials of the first and second kind respectively,
%have their Stieltjes transforms $F_X$ and $F_W$ given by
We have
\begin{equation}\label{Cheb2:cauch1} F_W(x) = V(x)D(x)V^{-1}(x)A^{-1}\end{equation}
and \begin{eqnarray} \label{Cheb1:cauch1}  F_X(x) &=& \left[ xI_r-B-2A^* F_W(x)A \right]^{-1} \\
\label{Cheb1:cauch2}  &=& V\left[AVD^{-1}-A^*VD\right]^{-1},
\end{eqnarray}
{for all but finitely many $x\in\cee\setminus \Gamma_0$. %$x\in\cee\setminus ([-M,M]\cup S)$.
Hence the matrix-valued measures $W$ and $X$ are both supported on $\Gamma_0$
together with a finite, possibly empty set of mass points on $\er$.}
\end{proposition}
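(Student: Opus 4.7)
The plan is to verify the three displayed formulas in order and then deduce the support statement by an analytic-extension argument; all four steps rest on Theorem~\ref{theorem:ratioasy} and on Dur\'an's identification $F_W(x) = R(x)A^{-1}$ recalled in the introduction. Formula \eqref{Cheb2:cauch1} is then immediate: substitute the diagonalization $R(x) = V(x)D(x)V(x)^{-1}$ provided by Theorem~\ref{theorem:ratioasy}.

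Next, I would obtain \eqref{Cheb1:cauch1} by a block Schur-complement argument applied to the first-kind Jacobi matrix $J_X$. Because $A_1 = \sqrt{2}\,A$ while all other recurrence coefficients coincide with the second-kind values, the submatrix of $J_X$ obtained by deleting its first block row and column is a shifted copy of $J_W$. A Schur complement applied to truncations $J_{X,n}$, passed to the limit $n\to\infty$, together with $(\sqrt{2})^2 = 2$ and the convention used for the Stieltjes transform, then yields $F_X(x) = [xI - B - 2A^{*}F_W(x)A]^{-1}$.

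The step from \eqref{Cheb1:cauch1} to \eqref{Cheb1:cauch2} is a short algebraic manipulation. Substituting $F_W = VDV^{-1}A^{-1}$ gives $A^{*}F_W A = A^{*}VDV^{-1}$, while the matrix relation \eqref{R:matrix:relation} post-multiplied by $V$ reads $(xI - B)V = A^{*}VD + AVD^{-1}$. Collecting terms, one obtains
\begin{equation*}
xI - B - 2A^{*}F_W A = (AVD^{-1} - A^{*}VD)V^{-1},
\end{equation*}
and inverting both sides gives \eqref{Cheb1:cauch2}.

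For the support statement: on each connected component of $\cee\setminus\Gamma_0$ the strict inequality $|z_r(x)| < |z_{r+1}(x)|$, combined with finiteness of the branch locus of $f(z,x)$, lets one choose $z_1(x),\ldots,z_r(x)$, and hence $V(x)$ and $D(x)$, as analytic functions on a cofinite subset. Formulas \eqref{Cheb2:cauch1} and \eqref{Cheb1:cauch2} therefore extend $F_W$ and $F_X$ to meromorphic functions on $\cee\setminus\Gamma_0$, whose poles lie in the finite zero sets of $\det V$ and of $\det(AVD^{-1} - A^{*}VD)$ respectively. Since the Stieltjes transform of a positive matrix-valued measure on $\er$ is analytic off its support, $\supp W$ and $\supp X$ must be contained in $\Gamma_0$ together with the finitely many real poles so produced, which are then precisely the mass points. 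I expect the most delicate step to be the Schur-complement derivation of \eqref{Cheb1:cauch1}: it has to be implemented via truncation and a limiting argument, with the order of $A$ versus $A^{*}$ tracked in a manner compatible with the chosen Stieltjes-transform convention; an alternative is to verify that the proposed right-hand side has the correct expansion $x^{-1}I_r + O(x^{-2})$ at infinity and satisfies the same self-consistency relation as $F_X$, after which Stieltjes uniqueness closes the argument.
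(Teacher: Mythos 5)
Your steps for \eqref{Cheb2:cauch1}, \eqref{Cheb1:cauch2}, and the support claim follow essentially the route the paper takes: \eqref{Cheb2:cauch1} is read off by comparing the ratio asymptotics with Dur\'an's identification $F_W = R A^{-1}$; \eqref{Cheb1:cauch2} is obtained from \eqref{Cheb2:cauch1}--\eqref{Cheb1:cauch1} by substituting into the matrix relation $-(xI_r-B)V + A^*VD + AVD^{-1}=0$ (which is \eqref{matrixeq:VD} in the paper's proof); and the support statement is an analyticity argument on $\cee\setminus\Gamma_0$ away from branch points.

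Where you genuinely diverge from the paper is \eqref{Cheb1:cauch1}. The paper does not derive this formula; it cites the matrix continued fraction (coefficient-stripping) theory \cite{AN,Zyg}. You instead propose a block Schur complement on $J_X$ and acknowledge the $A$-versus-$A^*$ ordering as the delicate point. That worry is substantive, and as written your argument would give the wrong ordering. The Schur complement of $xI-J$ at the $(0,0)$ block yields
\begin{equation*}
\bigl[(xI-J)^{-1}\bigr]_{00} \;=\; \Bigl(xI_r - B_0 - A_1\,\bigl[(xI-J')^{-1}\bigr]_{00}\,A_1^{*}\Bigr)^{-1},
\end{equation*}
so after applying it to $J_X$ with $A_1=\sqrt{2}A$ you obtain $xI_r - B - 2A\,(\cdot)\,A^{*}$, with $A$ on the \emph{left} and $A^*$ on the \emph{right}; the target formula \eqref{Cheb1:cauch1} has $A^*$ on the left and $A$ on the right. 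For non-Hermitian $A$ these are different matrices, so the Schur complement on the Jacobi matrix does not directly produce \eqref{Cheb1:cauch1}. To close the gap you would need either to show that $\bigl[(xI-J_W)^{-1}\bigr]_{00}$ coincides with the paper's $F_W = R A^{-1}$ \emph{and} that for this particular $F_W$ the two orderings agree, or to abandon the raw Schur complement in favour of the coefficient-stripping identity appropriate to the Weyl function / Stieltjes transform of the orthogonality measure (which is precisely what the cited matrix continued fraction theory supplies). Your fallback plan — matching the expansion of the right-hand side at $x=\infty$ and invoking Stieltjes uniqueness — is in fact the safer route, because it would automatically detect such an ordering mismatch; but as stated in the proposal, the Schur-complement derivation is not yet sound for general non-Hermitian $A$.
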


\begin{proof}
By comparing \eqref{ratio:asy:VD} with Dur\'an's result \cite[Thm.~1.1]{Duran}
we get the claimed expression \eqref{Cheb2:cauch1}. The formula
\eqref{Cheb1:cauch1} follows from the theory of the matrix continued fraction
expansion, see \cite{AN} and also \cite{Zyg}. Formula \eqref{Cheb1:cauch2} is
then a consequence of \eqref{Cheb2:cauch1}--\eqref{Cheb1:cauch1} and the matrix
relation \begin{equation}\label{matrixeq:VD} -(xI_r-B)V+A^* VD+AVD^{-1} = 0,
\end{equation}
which is obvious from the definitions of $D=D(x)$ and $V=V(x)$.

{To prove the remaining claims, we note that the right hand side of
\eqref{Cheb2:cauch1} is analytic for $x\in\cee\setminus (\Gamma_0\cup S\cup
\til S)$ with $\til S$ the set of branch points of \eqref{algebraic:equation},
so $F_W(x)$ is also analytic there and therefore the measure $W$ has its
support in a subset of $(\Gamma_0\cup S\cup \til S)\cap\er$. Finally, the
determinant of the matrix in square brackets in \eqref{Cheb1:cauch1} is
analytic and not identically zero for $x\in\cee\setminus (\Gamma_0\cup S\cup
\til S)$ so it has only finitely many zeros there. This yields the claim about
the support of the measure $X$.}
\end{proof}

The above descriptions considerably simplify if $A$ is Hermitian. In that case
the algebraic equation \eqref{algebraic:equation} becomes
\begin{equation}\label{algebraic:equation:w}
0=f(z,x) = \det(Aw+B-x I_r),
\end{equation}
where \begin{equation}\label{def:w} w:=z+z^{-1}.
\end{equation}

{As in Section~\ref{subsection:mu0alternative}, for any  open interval
$\mathcal I:=(x_1,x_2)\subset\Gamma_0$ disjoint from the set of branch points
of the algebraic equation \eqref{algebraic:equation}, we can choose an ordering
of the roots $z_k(x)$, $x\in \mathcal I$, so that each $z_k(x)$ is the
restriction to $\mathcal I$ of an analytic function defined on an open complex
neighborhood $\Omega\supset \mathcal I$. Thereby we drop the ordering
constraint \eqref{rootszk:ordering}. By
\eqref{algebraic:equation:w}--\eqref{def:w} we may assume that
\begin{equation}\label{assumption:Herm1}z_{2r-k}(x)=z_k(x)^{-1},\qquad
k=1,\ldots,r,\end{equation} for all $x\in \Omega\supset \mathcal I$ and we
write}
\begin{equation}\label{assumption:Herm2} w_k(x)=z_k(x)+z_k(x)^{-1},\qquad
k=1,\ldots,r.
\end{equation}

If $w_k(x)\in (-2,2)$ for all $x\in \mathcal I$ then write
\begin{equation}\label{assumption:Herm3}w_k(x)=2\cos \theta_k(x),\qquad
0<\theta_k(x)<\pi,\end{equation} with $\theta_k$ a real-valued, differentiable
argument function on $\mathcal I$. Denote with $V(x)$ the matrix formed by the
normalized null space vectors $\vecv_k(x)$ for the roots $w_k(x)$, $x\in
\mathcal I$.

\begin{proposition}\label{prop:Cheb12bis} Assume that $A$ is Hermitian. Then with the above
notations,
%and with $X,W$ as in Proposition~\ref{prop:Cheb12},
the density of the absolutely continuous part of the measures $X$ and $W$ is
given by
$$ \frac{\ud X(x)}{\ud x} = V(x)\Lambda_X(x)V^{-1}(x)A^{-1},\qquad x\in\Gamma_0,
$$
$$ \frac{\ud W(x)}{\ud x} = V(x)\Lambda_W(x)V^{-1}(x)A^{-1},\qquad x\in\Gamma_0,
$$
where
$$ \Lambda_X(x) = \frac{1}{\pi}\diag\left(\frac{\mathbf 1_{w_k(x)\in(-2,2)}}{\sqrt{4-w_k(x)^2}}\ \mathrm{sign}\,{w_k'(x)}\right)_{k=1}^r,
$$
$$ \Lambda_W(x) = \frac{1}{2\pi}\diag\left(\mathbf 1_{w_k(x)\in(-2,2)}\sqrt{4-w_k(x)^2}\
\mathrm{sign}\,{w_k'(x)}\right)_{k=1}^r,
$$
and where the characteristic function $\mathbf 1_{w_k\in (-2,2)}$ takes the
value $1$ if $w_k\in (-2,2)$ and zero otherwise.
\end{proposition}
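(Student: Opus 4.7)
The plan is to combine Proposition~\ref{prop:Cheb12} with the matrix Stieltjes--Perron inversion formula, exploiting the simplification afforded by $A^*=A$. From \eqref{Cheb2:cauch1} we already have $F_W(x)=V(x)D(x)V^{-1}(x)A^{-1}$. For $F_X$, I would substitute $A^*=A$ into \eqref{Cheb1:cauch2} and factor: $AVD^{-1}-A^*VD=AV(D^{-1}-D)$, yielding the explicit diagonal form
$$F_X(x)=V(x)\bigl(D(x)^{-1}-D(x)\bigr)^{-1}V(x)^{-1}A^{-1}.$$
The densities of $W$ and $X$ are then recovered by applying the Stieltjes--Perron formula $\frac{d\mu}{dx}=\frac{1}{2\pi i}\bigl(F_{-}(x)-F_{+}(x)\bigr)$ entrywise on the interval $\mathcal I\subset\Gamma_0$.

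On $\mathcal I$, which by hypothesis avoids the branch points of the algebraic equation \eqref{algebraic:equation}, the roots $w_k(x)$ and the corresponding null space vectors $\vecv_k(x)$ extend to single-valued analytic functions in a complex neighborhood $\Omega\supset\mathcal I$, so $V(x)$ has no jump across $\mathcal I$. All jumps of $F_W$ and $F_X$ therefore come from the diagonal matrices $D$ and $(D^{-1}-D)^{-1}$, and the key task is to compute the boundary values $z_{k,\pm}(x)$ on $\mathcal I$. Using $w_k(x_0+i\epsilon)\approx w_k(x_0)+i\epsilon w_k'(x_0)$ and the inverse Joukowski map $z=(w-\sqrt{w^2-4})/2$, tracking which branch makes $|z|<1$ in the upper/lower half $w$-plane, I would show that for $w_k(x)\in(-2,2)$
$$z_{k,+}(x)=e^{-i\,\mathrm{sign}(w_k'(x))\,\theta_k(x)},\qquad z_{k,-}(x)=e^{+i\,\mathrm{sign}(w_k'(x))\,\theta_k(x)},$$
while for $w_k(x)\notin[-2,2]$ the root $z_k$ is real and continues analytically across $x$, contributing no jump. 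This step is responsible for the indicator $\mathbf 1_{w_k\in(-2,2)}$ and produces the $\mathrm{sign}(w_k'(x))$ factor; I expect this sign bookkeeping to be the main technical point.

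Substituting into the inversion formula, the $k$-th diagonal entry of $D_-(x)-D_+(x)$ equals $2i\,\mathrm{sign}(w_k')\sin\theta_k=i\,\mathrm{sign}(w_k')\sqrt{4-w_k^2}$, which upon division by $2\pi i$ gives exactly $\Lambda_W(x)$. The parallel calculation for $(D^{-1}-D)^{-1}$ uses $z_{k,\pm}^{-1}-z_{k,\pm}=\mp 2i\,\mathrm{sign}(w_k')\sin\theta_k$, whence the $k$-th diagonal entry of the jump is $i\,\mathrm{sign}(w_k')/\sin\theta_k=2i\,\mathrm{sign}(w_k')/\sqrt{4-w_k^2}$, producing $\Lambda_X(x)$ after division by $2\pi i$. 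Conjugating by $V(x)$ and multiplying by $A^{-1}$ yields the claimed formulas for $dW/dx$ and $dX/dx$ on $\mathcal I$; since $\mathcal I$ is an arbitrary branch-point-free subinterval of $\Gamma_0$, this gives the absolutely continuous part of the measures on all of $\Gamma_0$.
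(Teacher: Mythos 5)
Your proposal matches the paper's proof: both take boundary values of $F_W$ and $F_X$ from above and below $\mathcal I\subset\Gamma_0$ using Proposition~\ref{prop:Cheb12} and apply Stieltjes--Perron inversion, with the $\mathrm{sign}\,w_k'(x)$ factor arising from the Cauchy--Riemann bookkeeping of which root $e^{\pm i\theta_k(x)}$ continues to modulus $<1$ in each half-plane, and the indicator $\mathbf 1_{w_k\in(-2,2)}$ arising because real roots $z_k$ (i.e.\ $w_k\notin[-2,2]$) extend analytically across $\mathcal I$ and contribute no jump. One small slip: with $z_{k,\pm}=e^{\mp i\,\mathrm{sign}(w_k')\theta_k}$ you should get $z_{k,\pm}^{-1}-z_{k,\pm}=\pm 2i\,\mathrm{sign}(w_k')\sin\theta_k$ (not $\mp$), but your stated jump and the final $\Lambda_X$, $\Lambda_W$ are nevertheless correct.
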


Proposition~\ref{prop:Cheb12bis} is proved in
Section~\ref{subsection:proof:cheb}.  If $A$ is positive definite then the
factors $\mathrm{sign}\,{w_k'(x)}$ in the above formulas can be removed. This
follows from \eqref{connection:with:DLS} below. Then one can show that the
above formulas correspond to those in \cite{Duran,DLS}.
%Also note that the matrix orthogonal polynomials $P_n(x)$ do
%not change if the orthogonality measure is multiplied with a non-zero scalar.
%So the factors $1/\pi$ and $1/(2\pi)$ in the above formulas are not essential.

\section{The results of Dur\' an-L\'opez-Saff revisited.} \label{section:DLS}

In this section we show how Theorem~\ref{theorem:zerodistr} on the limiting
zero distribution of $P_n(x)$ in the matrix Nevai class, relates to the
formulas of Dur\' an-L\'opez-Saff~\cite{DLS} for the case where the matrix $A$
is positive definite or Hermitian.

Throughout this section we write the algebraic equation $f(z,x)=0$ as in
\eqref{algebraic:equation:w}--\eqref{def:w}. First we will assume that $A$ is
positive definite. Then $A^{1/2}$ exists and we can replace the algebraic
equation \eqref{algebraic:equation:w} by
\begin{equation*}
0= \det(w I_r+A^{-1/2}BA^{-1/2}-x A^{-1}).
\end{equation*}
Hence the roots $w$ are the eigenvalues of the matrix $$ x
A^{-1}-A^{-1/2}BA^{-1/2}.$$ If $x\in\er$ then this matrix is Hermitian and we
denote its spectral decomposition by \begin{equation}\label{spectral:decomp} x
A^{-1} - A^{-1/2}BA^{-1/2} = U(x)D_w(x)U^{-1}(x),
\end{equation}
where $$D_w(x)=\diag(w_1(x),\ldots,w_r(x))$$ is the diagonal matrix containing
the eigenvalues, and $U(x)$ is the corresponding eigenvector matrix. We can
assume that $U(x)$ is unitary, i.e.~$U^{-1}(x) = U^*(x)$.

{As in the previous section, we fix an open interval $\mathcal
I:=(x_1,x_2)\subset\Gamma_0$ disjoint from the set of branch points of the
algebraic equation \eqref{algebraic:equation}, and we choose an ordering of the
roots $z_k(x)$, $x\in \mathcal I$, so that each $z_k(x)$ is the restriction to
$\mathcal I$ of an analytic function defined on an open complex neighborhood
$\Omega\supset \mathcal I$. The same then holds for $w_k(x)$ in
\eqref{assumption:Herm2}}.
%and so the derivative $w_k'(x)$ is defined.}

%As in Section~\ref{subsection:mu0alternative}, for any  open interval
%$\mathcal I:=(x_1,x_2)\subset\Gamma_0$ disjoint from the set of branch points of the
%algebraic equation \eqref{algebraic:equation:w}, we can choose an ordering of
%the roots $w_k(x)$, $x\in \mathcal I$, so that each $w_k(x)$ depends analytically on
%$x\in \mathcal I$. So the derivative $w_k'(x)$ is defined.

\begin{lemma} If $A$ is positive definite, then with the above notations we have
\begin{equation}\label{connection:with:DLS} w_k'(x) =
(U^{-1}(x)A^{-1}U(x))_{k,k}>0,\qquad k=1,\ldots,p,
\end{equation}
where we use the notation $M_{k,k}$ to denote the $(k,k)$ entry of a matrix
$M$.
\end{lemma}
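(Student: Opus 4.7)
The plan is to differentiate the spectral decomposition \eqref{spectral:decomp} with respect to $x$, extract the diagonal of the resulting identity, and then invoke positive definiteness.

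First I would note that on an open interval $\mathcal I$ disjoint from the branch points of the algebraic equation, the eigenvalues $w_k(x)$ are distinct, so by standard analytic perturbation theory one can choose the eigenvector matrix $U(x)$ in \eqref{spectral:decomp} to be unitary and smoothly (in fact analytically) dependent on $x\in\mathcal I$. Differentiating both sides of
\[
xA^{-1}-A^{-1/2}BA^{-1/2}=U(x)D_w(x)U^{-1}(x)
\]
with respect to $x$, using $(U^{-1})'=-U^{-1}U'U^{-1}$, and then multiplying on the left by $U^{-1}$ and on the right by $U$, I would obtain
\[
U^{-1}(x)A^{-1}U(x)=D_w'(x)+[U^{-1}(x)U'(x),\,D_w(x)],
\]
where $[\,\cdot\,,\,\cdot\,]$ denotes the commutator.

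Taking the $(k,k)$-diagonal entry, the commutator term vanishes because its diagonal equals $(U^{-1}U')_{kk}w_k-w_k(U^{-1}U')_{kk}=0$, while $D_w'(x)$ contributes exactly $w_k'(x)$. This yields the identity
\[
w_k'(x)=\bigl(U^{-1}(x)A^{-1}U(x)\bigr)_{k,k},
\]
which is the first half of \eqref{connection:with:DLS}. For the positivity, the key observation is that $A$ positive definite implies $A^{-1}$ positive definite, and unitary conjugation preserves positive definiteness: $U^*A^{-1}U=U^{-1}A^{-1}U$ is Hermitian and $\langle v,U^{-1}A^{-1}Uv\rangle=\langle Uv,A^{-1}(Uv)\rangle>0$ for every $v\neq 0$. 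In particular all its diagonal entries are strictly positive, giving $w_k'(x)>0$.

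I do not expect a significant obstacle: the only delicate point is justifying the smooth/analytic choice of the unitary $U(x)$, which is handled by the avoidance of branch points on $\mathcal I$ (distinct eigenvalue case). Once that is in place, the computation is a short exercise in differentiating a spectral decomposition.
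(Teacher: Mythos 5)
Your proof is correct and follows essentially the same route as the paper: differentiate the spectral decomposition, conjugate by $U$, observe the commutator has zero diagonal, and use $U^{-1}=U^*$ with $A^{-1}\succ 0$ for positivity. The extra remark about analytic dependence of $U(x)$ away from branch points is a sensible justification that the paper leaves implicit.
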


\begin{proof} We take the derivative of \eqref{spectral:decomp} with respect to
$x$. This yields (we suppress the dependence on $x$ for simplicity)
$$ A^{-1}= U'D_w U^{-1}+UD_w'U^{-1}+UD_w(U^{-1})',
$$
or
$$ U^{-1}A^{-1}U= U^{-1}U'D_w+D_w'+D_w(U^{-1})'U.
$$
Invoking the fact that $(U^{-1})'=-U^{-1}U' U^{-1}$, this becomes
$$ U^{-1}A^{-1}U= D_w'+[U^{-1}U',D_w]~,
$$
where the square brackets denote the commutator. The equality in
\eqref{connection:with:DLS} then follows on taking the $(k,k)$ diagonal entry
of this matrix relation, and noting that the diagonal entries of the commutator
$[U^{-1}U',D_w]$ are all zero since $D_w$ is diagonal. Finally, the inequality
in \eqref{connection:with:DLS} follows because $A$ is positive definite and $U$
is unitary, $U^{-1}=U^*$.
\end{proof}

From \eqref{assumption:Herm2} we have \begin{equation}\label{pm:sign}
\frac{z_k'(x)}{z_k(x)}=\pm i\frac{w_k'(x)}{\sqrt{4-w_k(x)^2}}.\end{equation}

Thus the density of the limiting zero distribution $\mu_0$ in
\eqref{measure:mu0:bis} becomes \begin{equation}\label{measure:mu0:interm}
\frac{\ud\mu_0(x)}{\ud x} = \frac{1}{\pi r}\sum_{k=1}^r
\frac{w_k'(x)}{\sqrt{4-w_k(x)^2}}\mathbf 1_{w_k\in(-2,2)},
\end{equation}
where we used that $w_k'(x)>0$, and where again the characteristic function
$\mathbf 1_{w_k\in(-2,2)}$ takes the value $1$ if $w_k\in(-2,2)$ and zero
otherwise. Note that the factor $2$ in the denominator of
\eqref{measure:mu0:bis} is canceled since for any $w_k\in (-2,2)$ there are
\emph{two} solutions $z_k$ to \eqref{assumption:Herm2}, one leading to the plus
and the other to the minus sign in \eqref{pm:sign}.

Inserting \eqref{connection:with:DLS} in \eqref{measure:mu0:interm} we get
\begin{eqnarray*} \frac{\ud\mu_0(x)}{\ud x} &=& \frac{1}{\pi r}\sum_{k=1}^r
\frac{(U^{-1}(x)A^{-1}U(x))_{k,k}}{\sqrt{4-w_k(x)^2}}\mathbf
1_{w_k\in(-2,2)}\\
&=& \frac{1}{r}\tr\left( U^{-1}(x)A^{-1}U(x)\Lam_X(x)\right),
\end{eqnarray*}
where $\tr (C)  $ denotes the trace of the matrix $C$ and $\Lam_X(x)$ is the
diagonal matrix $$ \Lam_X(x) = \frac{1}{\pi}\diag\left(\frac{\mathbf
1_{w_k\in(-2,2)}}{\sqrt{4-w_k(x)^2}}\right)_{k=1}^r.
$$
Since the trace of a matrix product is invariant under cyclic permutations, we
find
$$
\frac{\ud\mu_0(x)}{\ud x} = \frac{1}{r}\tr\left(
A^{-1/2}U(x)\Lam_X(x)U^{-1}(x)A^{-1/2}\right).
$$
This is the result of Dur\' an-L\'opez-Saff for the case where $A$ is positive
definite \cite{DLS}.

\smallskip
Next suppose that $A$ is Hermitian but not necessarily positive definite. Then
we can basically repeat the above procedure. Rather than taking the square root
$A^{1/2}$, we now write \eqref{algebraic:equation:w} in the form
\begin{equation*}
0= \det(w I_r+A^{-1}B-x A^{-1}).
\end{equation*}
Hence the roots $w$ are the eigenvalues of the matrix $x A^{-1}-A^{-1}B$.
Supposing that this matrix is diagonalizable, we write
\begin{equation}\label{spectral:decomp:bis} x A^{-1} - A^{-1}B =
V(x)D_w(x)V^{-1}(x),
\end{equation}
with $D_w=\diag(w_1,\ldots,w_r)$ but with $V$ not necessarily unitary. The
notation $V$ is compatible with the one in the previous section, by virtue of
\eqref{matrixeq:VD}. We then again have the equality in
\eqref{connection:with:DLS} (with $U$ replaced by $V$) although the positivity
$w_k'(x)>0$ may be violated. Similarly to the above paragraphs, we find
$$
\frac{\ud\mu_0(x)}{\ud x} = \frac{1}{r}\tr\left(
V(x)\Lam_X(x)V^{-1}(x)A^{-1}\right),
$$
%where now $D(x)$ is the diagonal matrix $$ D(x) = \diag\left(\frac{\mathbf
%1_{w_k\in[-2,2]}\mathrm{sign}\,{w_k'(x)}}{\sqrt{4-w_k(x)^2}}\right)_{k=1}^r.
with $\Lam_X$ defined in Proposition~\ref{prop:Cheb12bis}. The latter
proposition then implies that
$$
\frac{\ud\mu_0(x)}{\ud x} = \frac{1}{r}\tr \left(\frac{\ud X(x)}{\ud x}\right).
$$
This is the result of Dur\' an-L\'opez-Saff for the case where $A$ is Hermitian
\cite{DLS}.

\section{An application to random matrices}
\label{random}

 In a fundamental paper
Dumitriu-Edelman \cite{DumEdel} introduced a tridiagonal random matrix
\begin{eqnarray*} \label{g1}
G^{(1)}_n=
 \left(
   \begin{array}{cccccc}
     N_1    & \frac {1}{\sqrt{2}} \mathcal{X}_{(n-1)\beta}  &   &  &  & \\
    \frac {1}{\sqrt{2}} \mathcal{X}_{(n-1)\beta}   & N_2  & \frac {1}{\sqrt{2}} \mathcal{X}_{(n-2)\beta}  &  &  &  \\
       & \frac {1}{\sqrt{2}} \mathcal{X}_{(n-2)\beta}  & N_3  &  &  &  \\
       &   &   & \ddots &  &  \\
       &   &   &  &  N_{n-1} & \frac {1}{\sqrt{2}} \mathcal{X}_{\beta}  \\
       &   &   &  & \frac {1}{\sqrt{2}} \mathcal{X}_{\beta} & N_n  \\
   \end{array}
 \right)
\end{eqnarray*}
where $N_1,\dots,N_n$ are independent identically  standard normal distributed
random variables and $\mathcal{X}^2_{1 \beta}, \dots,
\mathcal{X}^2_{(n-1)\beta}$ are independent random variables also independent
of $N_1,\dots,$ $N_n$, such that $\mathcal{X}^2_{j \beta}$ is a chi-square
distribution with $j \beta$ degrees of freedom. They showed that the density of
the eigenvalues $\lambda_1 \leq \dots \leq \lambda_n$ of the matrix $G_n$ is
given by the so called  beta ensemble
$$
c_\beta \prod_{i < j} | \lambda_i - \lambda_j |^\beta \cdot \exp \Bigl ( - \sum^n_{j=1} \frac {\lambda^2_j}{2} \Bigr )
$$
where $c_\beta > 0$  is an appropriate normalizing constant (see \cite{Dys} or
\cite{Mehta} among many others). It is well known that the empirical eigenvalue
distribution of $\frac {1}{\sqrt{n}}G_n$ converges weakly (almost surely) to
Wigner's semi-circle law. In the following discussion we will use the results
of Section \ref{subsection:N} to derive a corresponding result for a
$(2r+1)$-band matrix of a similar structure. To be precise consider the matrix
\begin{eqnarray*}
G^{(r)}_n= {\footnotesize {1 \over \sqrt2} \left (
\begin{array}{cccccccccc}
\sqrt{2} \ N_1 & \mathcal{X}_{(n-1)\gamma_1}
%& \dots
& \dots & \mathcal{X}_{(n-r)\gamma_r} & & & &  & \\
\mathcal{X}_{(n-1)\gamma_1} & \sqrt{2} \ N_2
%& \mathcal{X}_{(n-2)\gamma_1}
& \dots  & \mathcal{X}_{(n-r)\gamma_{r-1}} & \mathcal{X}_{(n-r-1)\gamma_r} & & &  &\\
\mathcal{X}_{(n-2)\gamma_{2}} & \mathcal{X}_{(n-2)\gamma_{1}}
% &  \sqrt{2} N_3
& \ddots & \ddots & \ddots  & \ddots & \ddots &   & \\
& \mathcal{X}_{(n-3)\gamma_{2}}
% & \mathcal{X}_{(n-3)\gamma_{1}}
&  &\ddots & \ddots & \ddots  & \ddots & \ddots &   \\
 \vdots  &  \vdots
 %&  \ddots
 &  \ddots  &\ddots & \ddots & \ddots  & \ddots & \ddots &   \\
\mathcal{X}_{(n-r)\gamma_r}  &  \mathcal{X}_{(n-r)\gamma_{r-1}}
% &   \ddots
 &   \ddots  &\ddots & \ddots & \ddots  & \ddots & \ddots &   \\
&  \mathcal{X}_{(n-r-1 )\gamma_r}   &
% \ddots
&   \ddots  &\ddots & \ddots & \ddots  & \ddots & \ddots &   \\
&   &
% \ddots
& \ddots   &\ddots & \ddots & \ddots  & \ddots & \ddots &   \\
&   &
%&
  &\ddots & \ddots & \ddots  & \ddots & \ddots &   \\
& &
%&
 & & & & & \mathcal{X}_{2 \gamma_1} & \mathcal{X}_{ \gamma_2} \\
& & % &
 & & & & \mathcal{X}_{2 \gamma_1} & \sqrt{2}\ N_{n-1}  & \mathcal{X}_{ \gamma_1} \\
& & % &
  & & &  &\mathcal{X}_{ \gamma_2} & \mathcal{X}_{\gamma_1} & \sqrt{2} \ N_n \\
 \end{array}
\right )}
\end{eqnarray*}
where $n=mr$,  all random variables in the matrix $G^{(r)}_n$ are independent
and $N_j$ is standard normal distributed ($j=1,\ldots , n$), while for
$j=1,\dots,n-1, \ k=1,\dots,r$ $\mathcal{X}^2_{j\gamma_k}$ has a chi-square
distribution with $j \gamma_k$ degrees of freedom $(\gamma_1, \dots, \gamma_r
\geq 0)$. It now follows by similar arguments as in \cite{Dette} that the
empirical distribution of the eigenvalues $\lambda^{(n,r)}_1 \leq \dots \leq
\lambda^{(n,r)}_n$ of the matrix $\frac {1}{\sqrt{n}} G^{(r)}_n$ has the same
asymptotic properties as the limiting distribution of the roots of the
orthogonal matrix polynomials $R_{m,n}(x)$ defined by $(R_{-1,n} (x)=0, \
R_{0,n}(x)=I_r)$
$$
x R_{k,n}(x)=A_{k+1,n} R_{k+1,n}(x)+B_{k,n}R_{k,n}(x)+A^*_{k,n} R_{k-1,n}(x); \quad k \geq 0,
$$
where the $r \times r$ matrices $A_{i,n}$ and $B_{i,n}$ are given by
\begin{eqnarray*}
A_{i,n} &=&{\footnotesize
\frac{1}{\sqrt{2n}}\left(
\begin{array}{cccccccc}
\sqrt{((i-1)r+1)\gamma_r} & 0  & 0  & \cdots   &  0      \\
\sqrt{((i-1)r+2)\gamma_{r-1}} &\sqrt{((i-1)r+2)\gamma_r}&  0  & \cdots &    0 \\
  &  &  &  &   \\
 \vdots &\ddots & \ddots & \ddots & \vdots  \\
  & & &  &   \\
 \sqrt{(ir-1)\gamma_2}  &\cdots  &  \sqrt{(ir-1)\gamma_{r-1} }   & \sqrt{(ir-1)\gamma_r} & 0 \\
 \sqrt{ir\gamma_1}  & \cdots&  \sqrt{ir\gamma_{r-2}}&\sqrt{ir\gamma_{r-1}} & \sqrt{ir\gamma_r} \\
 \end{array}
\right)}, \\
&& \\
{ B}_{i,n}&=&
{\footnotesize
\frac{1}{\sqrt{2n}}\left(
\begin{array}{cccccccc}
0 &  \sqrt{(ir+1)\gamma_1} &\sqrt{(ir+1)\gamma_2}  & \cdots   &   \sqrt{(ir+1)\gamma_{r-1}}       \\
\sqrt{(ir+1)\gamma_1}  &0& \sqrt{(ir+2)\gamma_1} & \cdots &    \sqrt{(ir+2)\gamma_{r-2}}  \\
  &  &  &  &   \\
 \vdots &\ddots & \ddots & \ddots & \vdots  \\
  & & &  &   \\
 \sqrt{(ir+1)\gamma_{r-2}}  &\cdots  &  \sqrt{((i+1)r-2)\gamma_{1}}   &  0 & \sqrt{((i+1)r-2)\gamma_1} \\
 \sqrt{(ir+1)\gamma_{r-1} }  & \cdots&  \sqrt{((i+1)r-2)\gamma_{2}} & \sqrt{((i+1)r-1)\gamma_1} & 0 \\
 \end{array}
\right)},
\end{eqnarray*}
Now it is easy to see that for any $u \in (0,1)$
\begin{eqnarray*}
\lim_{\frac{i}{n}\to u} {B}_{i,n}= B(u) , ~ \lim_{\frac{i}{n}\to u} {A}_{i,n}=
A (u),
\end{eqnarray*}
where
\begin{eqnarray}\label{0.1}
A (u)&:=&\sqrt{\frac{ur}{2}}\left(
\begin{array}{cccccccc}
\sqrt{\gamma_r} &  0  & 0  & \cdots   & 0      \\
\sqrt{\gamma_{r-1}} & \sqrt{\gamma_r}&  0  & \cdots &   0 \\
  &  &  &  &   \\
 \vdots &\ddots & \ddots & \ddots & \vdots  \\
  & & &  &   \\
 \sqrt{\gamma_2}  &\cdots  &  \sqrt{\gamma_{r-1} }   & \sqrt{\gamma_r} & 0 \\
 \sqrt{\gamma_1}  & \cdots&  \sqrt{\gamma_{r-2}}&\sqrt{\gamma_{r-1}} & \sqrt{\gamma_r} \\
 \end{array}
\right)\in\er^{r\times r}, \\
\nonumber  \\ \label{0.2} B(u)&:=&\sqrt{\frac{ur}{2}}\left(
\begin{array}{cccccccc}
0 &  \sqrt{\gamma_1} &\sqrt{\gamma_2}  & \cdots   &   \sqrt{\gamma_{r-1}}       \\
\sqrt{\gamma_1}  &0& \sqrt{\gamma_1} & \cdots &    \sqrt{\gamma_{r-2}}  \\
  &  &  &  &   \\
 \vdots &\ddots & \ddots & \ddots & \vdots  \\
  & & &  &   \\
 \sqrt{\gamma_{r-2}}  &\cdots  &  \sqrt{\gamma_{1}}   &  0 & \sqrt{\gamma_1} \\
 \sqrt{\gamma_{r-1} }  & \cdots&  \sqrt{\gamma_{2}} & \sqrt{\gamma_1} & 0 \\
 \end{array}
\right)\in\er^{r\times r},
 \end{eqnarray}
and Theorem \ref{theorem:zerodistr:N} shows that the normalized counting
measure of the roots of the matrix orthogonal polynomial $R_{m,n}(x)$ has a
weak limit $\mu_{0,1/r}$ defined by its logarithmic potential, that is
$$
\int \log |x-t|^{-1} d \mu_{0,1/r}(t)= \int^{1/r}_0 \log | z_1(x,u), \dots,
z_r(x,u) | du + c(r)
$$
where $z_1(x,u),\dots,z_r(x,u)$ are the roots of the equation
\begin{equation} \label{eq1}
\det (A^*(u)z+B(u)+A(u) z^{-1} - x I_r)=0
\end{equation}
corresponding to the smallest moduli. Observing the structure of the matrices in (\ref{0.1}) and (\ref{0.2}) it follows that
$$
z_j(x,u)=z_j \Bigl ( \frac {x}{\sqrt{u}} \Bigr )
$$
where $z_1(x),\dots,z_r(x)$ are the roots of the equation (\ref{eq1}) for $u=1$
(with smallest modulus). Similar arguments as given in the proof of Proposition
\ref{prop:mu0:bis} now show that the measure $\mu_{0,1/r}$ is absolutely
continuous with density defined by
\begin{eqnarray} \label{lim}
\frac {\ud \mu_{0,1/r}(x)}{\ud x} &=& \int^{1/r}_0 \frac {1}{2 \pi} \sum_{k:
|z_k(x,u)|=1} \Big | \frac{{\partial \over \partial x } z_k(x,u)}{ z_k (x,u)}
\Big | \ud u \\ \label{lim:2} &=& \frac {1}{2 \pi} \int^{1/r}_0 \frac
{1}{\sqrt{u}} \sum_{k: |z_k(x/\sqrt{u})|=1} \Big |  \frac
{z_k^\prime(x/\sqrt{u})}{z_k (x/\sqrt{u})} \Big | \ud u.
\end{eqnarray}
%Then the measure $\mu_{0,1/r}$ is absolutely continuous with density defined by
%\begin{eqnarray} \label{lim}
%\frac {d \mu_{0,1/r}(x)}{dx} &=& \int^{1/r}_0 \frac {1}{\pi} \sum_{k=1}^{2r}
%\left |\Im\left( \frac{{\partial \over \partial x } z_k(x,u)}{ z_k
%(x,u)}\right) \right| du \\ \nonumber &=& \frac {1}{\pi} \int^{1/r}_0 \frac
%{1}{\sqrt{u}} \sum_{k=1}^{2r} \left|\Im\left(  \frac
%{z_k^\prime(x/\sqrt{u})}{z_k (x/\sqrt{u})} \right)\right| du.
%\end{eqnarray}
%with $\Im$ denoting the imaginary part.
By the same reasoning as in
\cite{Dette}  we therefore obtain the following result ($\delta_x$ denotes the
Dirac measure).

\begin{theorem}\label{theoranmatrix}
 If $\lambda_1^{(n,r)} \leq \dots \leq \lambda_n^{(n,r)}$ denote the eigenvalues of the random matrix $G_n^{(r)}/\sqrt{n}$ with $\gamma_1,\dots,\gamma_r > 0$, then the empirical eigenvalue distribution
$$
 \frac {1}{n} \sum^n_{j=1} \delta_{\lambda_j^{(n,r)}}
$$
converges weakly (almost surely) to the measure $\mu_{0,1/r}$ defined in
(\ref{lim}).
\end{theorem}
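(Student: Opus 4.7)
The plan is to follow the strategy of \cite{Dette}, adapted to the matrix setting, by assembling three ingredients: (i) a deterministic reformulation of the problem in terms of the block Jacobi matrix associated with the matrix polynomials $R_{m,n}$; (ii) a concentration estimate showing that the random band matrix is a small perturbation of this block Jacobi matrix; and (iii) the already-established asymptotics of Theorem~\ref{theorem:zerodistr:N}.

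First I would exploit $n=mr$ to regroup the rows and columns of $G_n^{(r)}/\sqrt{n}$ into $m$ consecutive blocks of size $r$. The resulting block-tridiagonal matrix has $r\times r$ off-diagonal blocks whose entries are the rescaled chi-variables and diagonal blocks whose only random entries are the normals $N_j/\sqrt{n}$. Replacing each $\mathcal{X}_{j\gamma_k}$ by its ``deterministic surrogate'' $\sqrt{j\gamma_k}$ and each $N_j$ by $0$ produces precisely the block Jacobi matrix $J_{m,n}$ with blocks $A_{i,n}, B_{i,n}$ given in the theorem statement. Since these blocks satisfy $\lim_{i/n\to u}A_{i,n}=A(u)$ and $\lim_{i/n\to u}B_{i,n}=B(u)$ with $A(u),B(u)$ continuous in $u\in(0,1]$, and since the empirical eigenvalue distribution of $J_{m,n}$ coincides with the normalized zero counting measure of $\det R_{m,n}(x)$ on $n=mr$ zeros, Theorem~\ref{theorem:zerodistr:N} (applied with the time parameter $s=m/n=1/r$) yields weak convergence of this deterministic empirical measure to $\mu_{0,1/r}$, with logarithmic potential given by the stated integral. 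The passage from that logarithmic potential to the explicit density in \eqref{lim} follows by differentiating under the integral and applying Proposition~\ref{prop:mu0:bis} for each fixed $u$; the further simplification \eqref{lim:2} uses the homogeneity $A(u)=\sqrt{u}\,A(1)$, $B(u)=\sqrt{u}\,B(1)$ evident from \eqref{0.1}--\eqref{0.2}, which forces $z_j(x,u)=z_j(x/\sqrt{u})$ in \eqref{eq1}.

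Next, to pass from the deterministic conclusion to the random matrix, I would invoke the Hoffman--Wielandt inequality: since both $G_n^{(r)}/\sqrt{n}$ and $J_{m,n}$ are Hermitian,
\begin{equation*}
\sum_{j=1}^{n}\bigl|\lambda_j(G_n^{(r)}/\sqrt{n})-\lambda_j(J_{m,n})\bigr|^2\;\leq\;\bigl\|G_n^{(r)}/\sqrt{n}-J_{m,n}\bigr\|_F^2.
\end{equation*}
Each off-diagonal entry of the difference has the form $(\mathcal{X}_{j\gamma_k}-\sqrt{j\gamma_k})/\sqrt{2n}$ and each diagonal entry is $N_j/\sqrt{n}$. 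Standard sub-exponential bounds for $\mathcal{X}^2_{j\gamma_k}$ and sub-Gaussian bounds for $N_j$, combined with a union bound over the $O(nr)$ non-zero entries and Borel--Cantelli, show that $\|G_n^{(r)}/\sqrt{n}-J_{m,n}\|_F^2/n\to 0$ almost surely. This is enough to conclude that the empirical spectral measures of the two matrices have the same weak limit almost surely (the Hoffman--Wielandt bound dominates the squared Wasserstein-$2$ distance between the two empirical measures), so Theorem~\ref{theoranmatrix} follows.

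The main obstacle is the almost-sure concentration step. The expected value of $\|G_n^{(r)}/\sqrt{n}-J_{m,n}\|_F^2/n$ is easily seen to be $O(1/n)$ because each of the $O(n)$ random entries contributes variance $O(1/n)$; however, to upgrade this to an almost-sure statement one must exploit the exponential tails of the $\chi^2$ and Gaussian distributions with sufficient uniformity across $j,k$ so that a Borel--Cantelli argument closes. The regime $j\gamma_k\to\infty$ is the easy one, while the entries with small index $j$ (near the corners of the band) require a separate estimate; these entries contribute negligibly to the Frobenius norm since there are only $O(r)$ of them, each of order $O(1/\sqrt{n})$ in expectation, so the overall contribution to $\|\cdot\|_F^2/n$ is $O(1/n^2)$ and causes no problem. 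All of this is routine but needs to be verified, and is essentially what \cite{Dette} carries out in the scalar-moment setting.
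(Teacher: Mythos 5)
Your proposal is correct and follows essentially the same route as the paper: regroup $G_n^{(r)}/\sqrt n$ into $r\times r$ blocks, replace $\mathcal X_{j\gamma_k}$ by $\sqrt{j\gamma_k}$ and $N_j$ by $0$ to obtain the deterministic block Jacobi matrix with blocks $A_{i,n},B_{i,n}$, observe that its eigenvalues are the zeros of $\det R_{m,n}$, and apply Theorem~\ref{theorem:zerodistr:N} with $s=m/n=1/r$. The paper itself does not spell out the comparison between the random matrix and its deterministic surrogate but delegates it with the phrase ``by similar arguments as in \cite{Dette}''; you close that gap with Hoffman--Wielandt plus a sub-exponential/Borel--Cantelli estimate, which is a sound (and standard) implementation of exactly that step, including the observation that the entries near the band's corners are few enough to be harmless.
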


We conclude this section with a brief example illustrating Theorem
\ref{theoranmatrix} in the case $r=2$. In the left part of Figure \ref{fig1} we
show the simulated eigenvalue distribution of the matrix $G_n^{(r)}/\sqrt{n}$
in the case $n=5000$ $\gamma_1=\gamma_2=1$, while the corresponding limiting
density is shown in the right part of the figure. Similar results in the case
$r=2$, $\gamma_1=1$ and $\gamma_2=5$ are depicted in Figure \ref{fig2}.
{Note that the derivatives $z_k'(x)$ can be evaluated numerically using
the formula (implicit function theorem)}
$$ z_k'(x) = -\frac{\frac{\partial f}{\partial x}(z_k(x),x)}{\frac{\partial f}{\partial z}(z_k(x),x)}.
$$

\begin{figure}\begin{center}
\includegraphics[width=7cm]{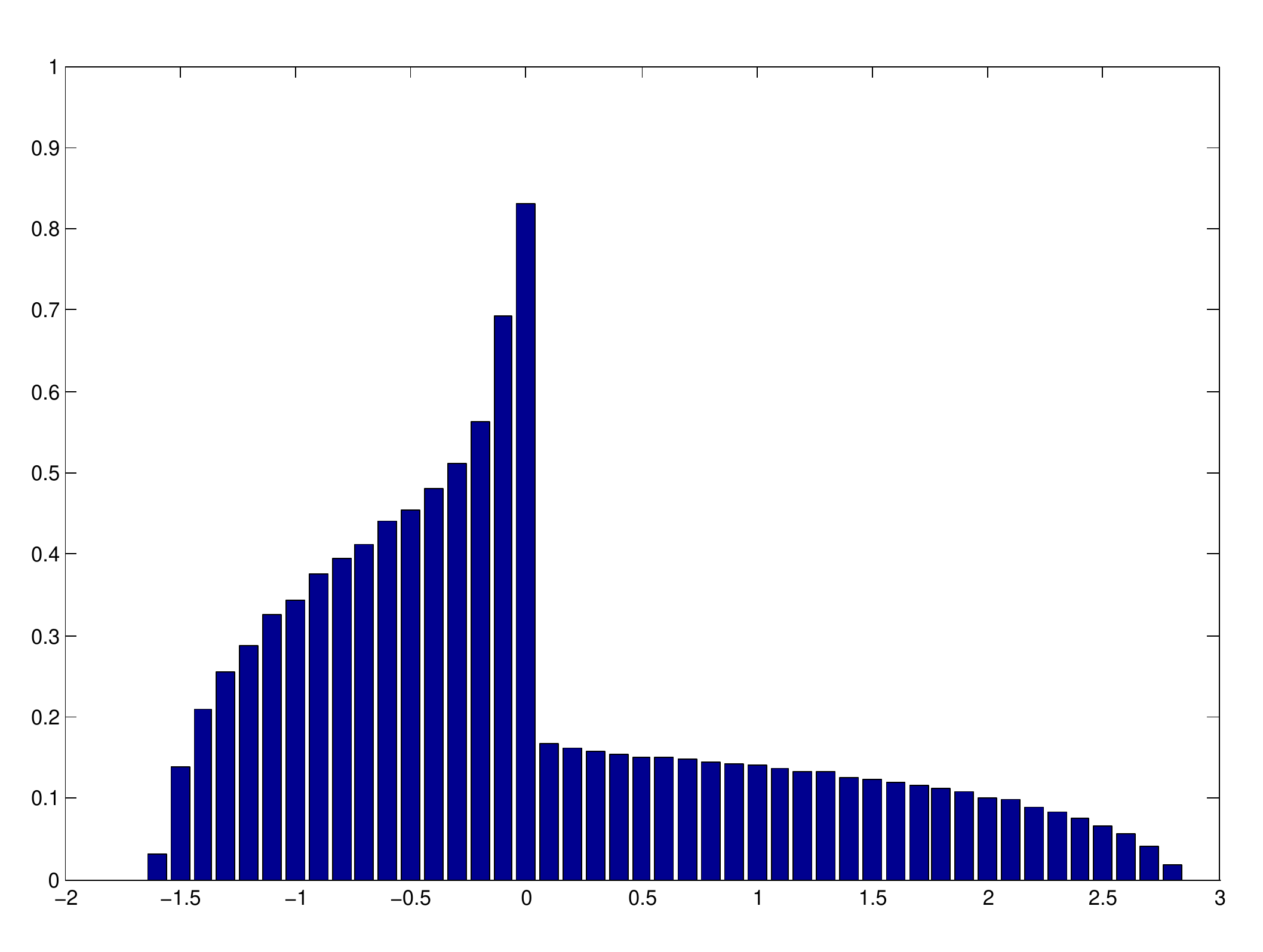}~~
 \includegraphics[width=7cm]{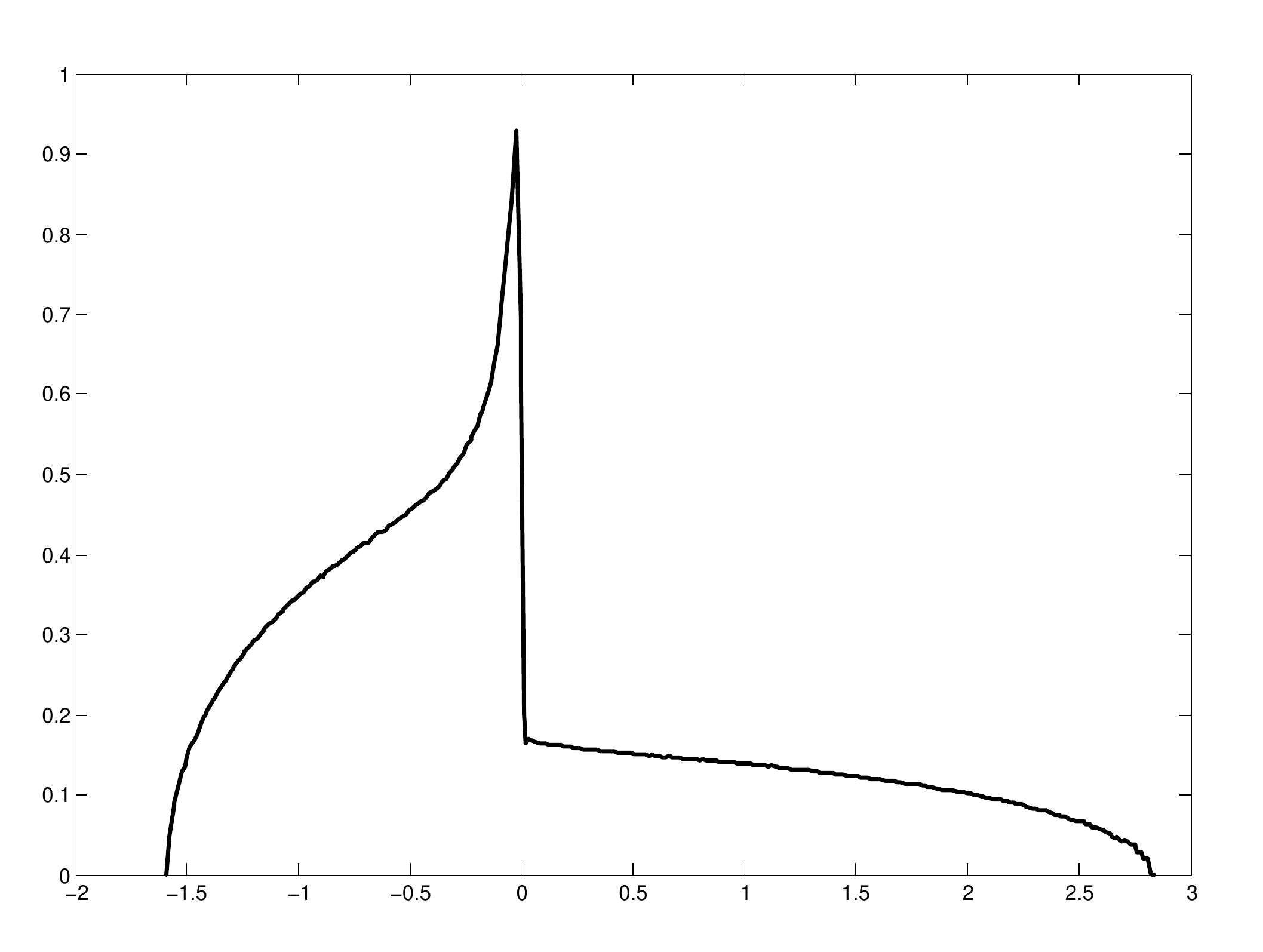}
\end{center}
\caption{
 \label{fig1}
\it Simulated and limiting spectral density of the random block matrix
$G_n^{(r)}/\sqrt{n}$ in the case $r=2$, $\gamma_1=1$, $\gamma_2=1$. In the
simulation the eigenvalue distribution of a $5000 \times 5000$ matrix was
calculated (i.e. m = n/r = 2500).}
\end{figure}

\begin{figure}
\begin{center}
\includegraphics[width=7cm]{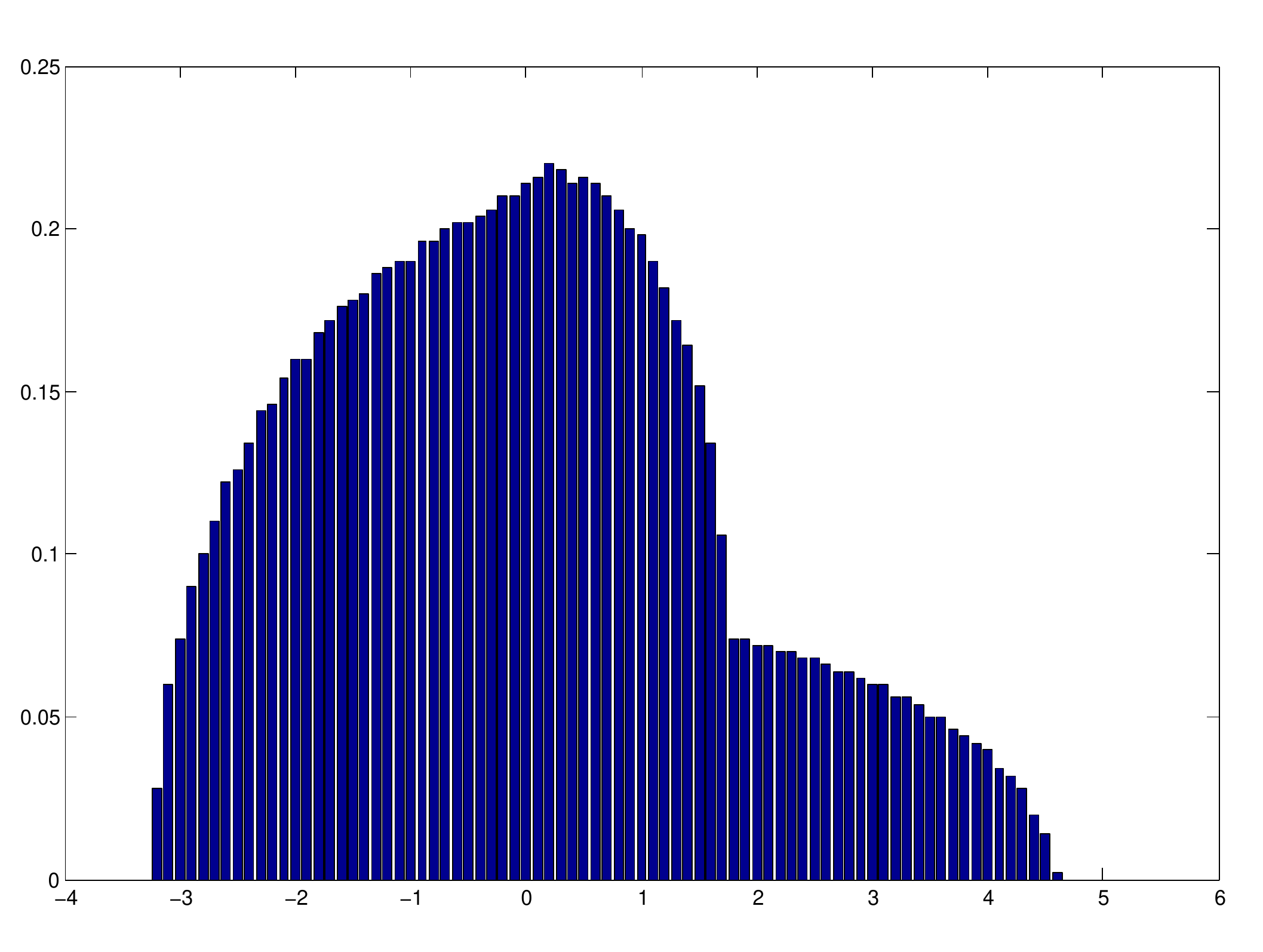}~~
 \includegraphics[width=7cm]{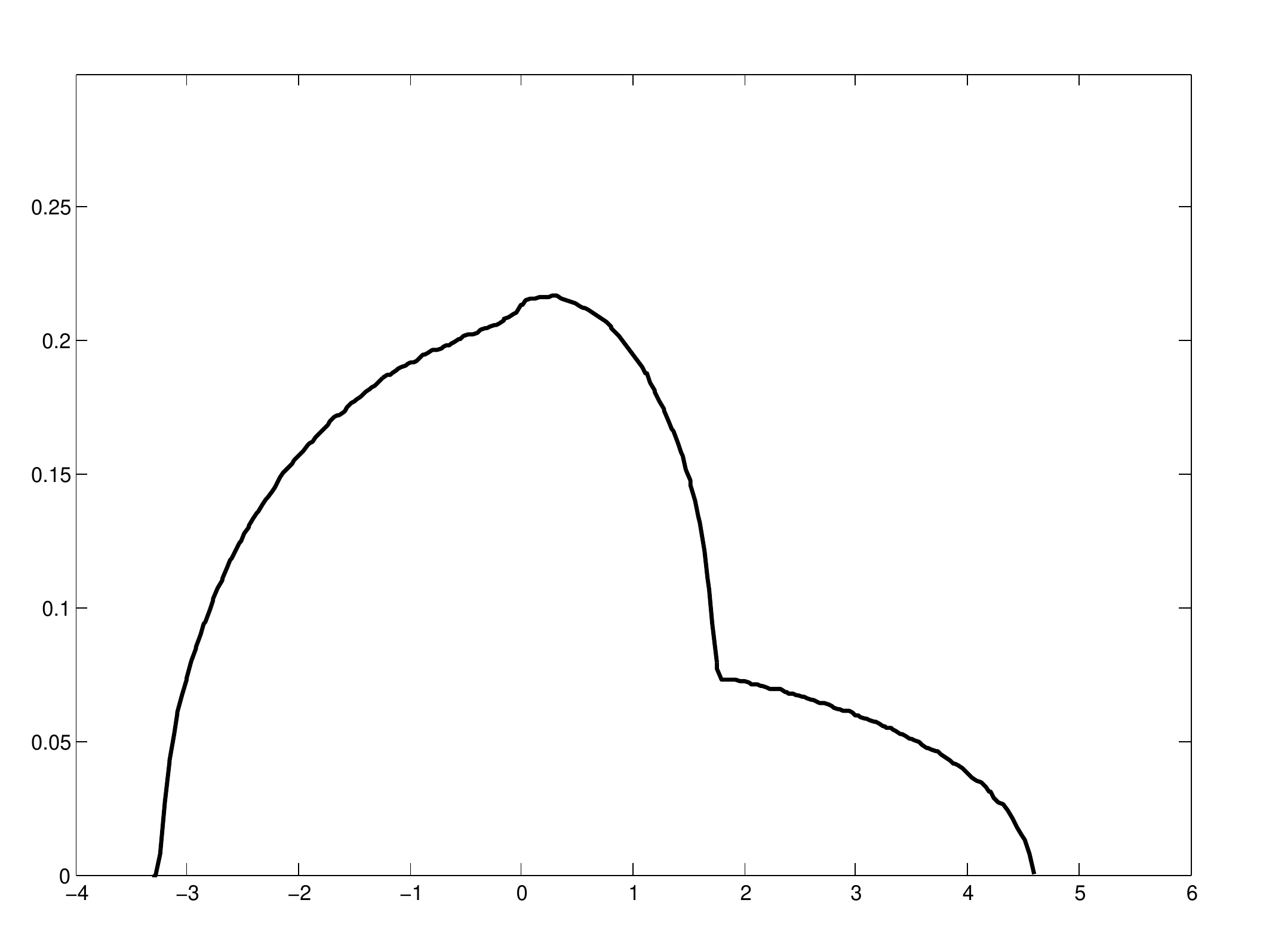}
\end{center}
\caption{ \label{fig2} \it Simulated and limiting spectral density of the
random block matrix $G_n^{(r)}/\sqrt{n}$ in the case $r=2$, $\gamma_1=1$,
$\gamma_2=5$. In the simulation the eigenvalue distribution of a $5000 \times
5000$ matrix was calculated (i.e. m = n/r = 2500).}
\end{figure}

\section{Proofs}\label{section:proofs}

\subsection{Proof of Proposition~\ref{prop:Gamma0}}
\label{subsection:proof:Gamma0}

First we consider the behavior of the roots $z_k(x)$ for $x\to\infty$. It is
easy to check that in this limit,
\begin{equation}\label{roots:infinity}
\left\{\begin{array}{ll} z_k(x)\to 0,& \qquad k=1,\ldots,r,\\
|z_k(x)|\to\infty,& \qquad k=r+1,\ldots,2r,\end{array}\right.
\end{equation}
see e.g.\ \cite{Delvaux}. In particular we have that $|z_r(x)|<|z_{r+1}(x)|$ if
$|x|$ is large enough, showing the compactness of $\Gamma_0$.

Next we ask the question: for which $x\in\cee$ can we have a root $z_k=z_k(x)$
such that $|z_k|=1$? In this case, \eqref{algebraic:equation} becomes
$$ 0 = f(z_k,x) = \det(z_k A^*+B+\overline{z_k}A-x I_r),
$$
so $x$ is an eigenvalue of the Hermitian matrix $z_k A^*+B+\overline{z_k}A$. In
particular, it follows that
\begin{equation}\label{root:unitcircle}
|z_k(x)|=1\ \Rightarrow\ x\in\er.
\end{equation}
Now for $x\to\infty$, \eqref{roots:infinity} implies that we have precisely $r$
roots $z_k(x)$ with $|z_k(x)|<1$. By \eqref{root:unitcircle} and continuity
this must then hold for all $x\in\cee\setminus\er$:
$$ x\in\cee\setminus\er\ \Rightarrow\ \left\{\begin{array}{ll}
|z_k(x)|<1,&\quad k=1,\ldots,r,\\ |z_k(x)|>1,&\quad
k=r+1,\ldots,2r.\end{array}\right.
$$
In particular we see that $|z_r(x)|<1<|z_{r+1}(x)|$ for $x\in\cee\setminus\er$,
implying that $\Gamma_0\subset\er$.

Finally, the claim that $\Gamma_0\subset\er$ is the disjoint union of at most
$r$ intervals follows from \cite{Delvaux,Widom1}. $\bol$

\subsection{Proof of Lemma~\ref{lemma:alggeomult}}
\label{subsection:proof:alggeo}

The proof will use ideas from Dur\'an \cite[Proof of Lemma~2.2]{Duran}. We
invoke a general result known as the \emph{square-free factorization for
multivariate polynomials}. In our context it implies that there exists a
factorization of the bivariate polynomial $z^r f(z,x)$ of the form
\begin{equation}\label{euclid:1} z^r f(z,x) = \prod_{k=1}^{K} g_k(z,x)^{m_k},
\end{equation}
for certain $K\in\zet_{>0}$, multiplicities $m_1,\ldots,m_K\in \zet_{> 0}$ and
non-constant bivariate polynomials $g_1(z,x),\ldots,g_K(z,x)$, in such a way
that
%the reduced (multiplicity-free) bivariate polynomial
\begin{equation}\label{euclid:2} g(z,x):=\prod_{k=1}^{K} g_k(z,x)
\end{equation}
is square-free, i.e., for all but finitely many $x\in\cee$, the roots $z$ of
$g(z,x)=0$ are all distinct, and vice versa with the roles of $x$ and $z$
reversed.

The existence of the square-free factorization of the previous paragraph, can
be obtained from a repeated use of Euclid's algorithm. For example, if $z^r
f(z,x)=0$ has a multiple root $z=z(x)$ for all $x\in\cee$, then we apply
Euclid's algorithm with input polynomials $z^r f(z,x)$ and
$\frac{\partial}{\partial z}\left(z^r f(z,x)\right)$, viewed as polynomials in
$z$ with coefficients in $\cee[x]$. This gives us the greatest common divisor
of these two polynomials and yields a factorization
$$ z^r f(z,x) = g_1(z,x)g_2(z,x),
$$
for two bivariate polynomials $g_1,g_2$ which depend both nontrivially on $z$.
{Note that the factorization can be taken fraction free, i.e., with
$g_1,g_2$ being polynomials in $x$ rather than rational functions.} If one of
the factors $g_1$ or $g_2$ has a multiple root $z=z(x)$ for all $x\in\cee$,
then we repeat the above procedure. If $g_1$ and $g_2$ have a common root
$z=z(x)$ for all $x\in\cee$, then we apply Euclid's algorithm with input
polynomials $g_1$ and $g_2$, viewed again as polynomials in $z$ with
coefficients in $\cee[x]$. Repeating this procedure sufficiently many times
yields the square-free factorization in the required form.
%\#\#\# Text book reference needed; see also Maple's \textsf{sqrfree}
%function and \cite[Proof of Lemma~2.5]{Duran}. \#\#\#

%In fact, if $z^r f(z,x)=0$ has a multiple root $x$ for all $z\in\cee$, then
%there is a common factor of $z^r f(z,x)=0$ and its resultant with respect to
%the variable $x$. Euclid's algorithm leads to a non-trivial factorization
%$z^rf(z,x)=g_1(z,x)g_2(z,x)$. We can then continue this procedure. Finally,
%observe that the factorization \eqref{euclid:1} can obviously be chosen to be
%denominator-free in both the variables $x$ and $z$.

%The same description also works with the roles of $x$ and $z$ reversed. That
%is, the factorization \eqref{euclid:1} is such that for all but finitely many
%$z\in\cee$, the polynomial \eqref{euclid:2} has all its zeros $x$ distinct.
Note that the factors $g_k(z,x)$ in \eqref{euclid:1} all depend non-trivially
on both $z$ and $x$. For if $g_k(z,x)$ would be a polynomial in $z$ alone
(say), then there would exist $z\in\cee$ such that $f(z,x)=0$ for all
$x\in\cee$, which is easily seen to contradict with \eqref{algebraic:equation}.

From the above paragraphs we easily get the symmetry relation
$$m_1(z,x) = m_2(z,x),
$$
for all but finitely many $z,x\in\cee$. This proves one part of
Lemma~\ref{lemma:alggeomult}.

It remains to show that $m_2(z,x)=d(z,x)$ for all but finitely many
$z,x\in\cee$. From the definitions, this is equivalent to showing that the
matrix $zA^*+B+z^{-1}A$ is diagonalizable for all but finitely many $z\in\cee$.
This is certainly true if $|z|=1$ since then $zA^*+B+z^{-1}A$ is Hermitian so
in particular diagonalizable. The claim then follows in exactly the same way as
in \cite[Proof of Lemma~2.2]{Duran}. $\bol$

\subsection{Proof of Theorem~\ref{theorem:ratioasy}}
\label{subsection:proof:ratio}

Before going to the proof of Theorem~\ref{theorem:ratioasy}, let us recall the
following result of Dette-Reuther \cite{Dette}. As mentioned before, this
result uses in an essential way the Hermitian structure of
\eqref{block:Jacobi}.
\begin{lemma}\label{lemma:DetteReuther} (See \cite{Dette}:)
If all roots of the matrix orthogonal polynomials $P_n(x)$ are located in the
interval $[-M,M]$, then the inequality $$ |\vecv^*
P_{n}(z)P^{-1}_{n+1}(z)A^{-1}_{n+1}\vecv| \leq
\frac{1}{\mathrm{dist}(z,[-M,M])}$$ holds for all complex numbers $z$ and for
all column vectors $\vecv$ with unit Euclidean norm $||\vecv||=1$.
\end{lemma}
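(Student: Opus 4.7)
The plan is to identify the matrix $P_n(z)P_{n+1}^{-1}(z)A_{n+1}^{-1}$ with the bottom-right $r\times r$ diagonal block of the resolvent $(zI-J_{n+1})^{-1}$, where $J_{n+1}$ denotes the block Jacobi matrix \eqref{block:Jacobi} of size $r(n+1)\times r(n+1)$ (containing the blocks $B_0,\ldots,B_n$). Granting this identification, the bound is essentially immediate: $J_{n+1}$ is Hermitian and its spectrum equals the zero set of $\det P_{n+1}$, which by assumption lies in $[-M,M]$, so the spectral theorem gives $\|(zI-J_{n+1})^{-1}\|\leq 1/\mathrm{dist}(z,[-M,M])$. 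The norm of any diagonal block of a matrix is bounded by the operator norm of the full matrix (pad a unit vector in $\cee^r$ with zeros outside the block), and for unit $\vecv$ we have $|\vecv^*M\vecv|\leq \|M\|$; this yields the claim.

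The identification itself will be proved by showing that both sides satisfy the same matrix continued fraction recursion with the same initial value. Write $J_{n+1}$ in $2\times 2$ block form with top-left block $J_n$, bottom-right block $B_n$, and with the only nonzero off-diagonal coupling concentrated in a single $A_n$ block at the interface. The Schur complement formula applied to the bottom-right $r\times r$ block then gives
\begin{equation}\label{planSC}
G_n(z):=\bigl((zI-J_{n+1})^{-1}\bigr)_{nn}=\bigl(zI_r-B_n-A_n^*G_{n-1}(z)A_n\bigr)^{-1},
\end{equation}
with initial condition $G_0(z)=(zI_r-B_0)^{-1}$. On the other hand, setting $R_n(z):=P_n(z)P_{n+1}^{-1}(z)A_{n+1}^{-1}$ and using \eqref{recurrence} in the form $A_{n+1}P_{n+1}=(xI_r-B_n)P_n-A_n^*P_{n-1}$, right-multiplying by $P_n^{-1}$, and rewriting $P_{n-1}P_n^{-1}=R_{n-1}A_n$, one obtains
$$R_n^{-1}(z)=zI_r-B_n-A_n^*R_{n-1}(z)A_n,$$
which is exactly the recursion \eqref{planSC}. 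The initial value $R_0(z)=(zI_r-B_0)^{-1}$ follows directly from $P_{-1}\equiv 0$ and $P_1=A_1^{-1}(xI_r-B_0)$. Induction on $n$ then yields $R_n\equiv G_n$.

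The only delicate point is the bookkeeping in the Schur complement identity, in particular checking that the coupling term comes out as $A_n^*G_{n-1}A_n$ with the Hermitian conjugates in the correct positions, so as to exactly mirror what the three-term recurrence produces for the polynomial ratio. Once that alignment is verified, the rest of the argument is forced by the spectral theorem. It is worth noting that this is precisely one of the places (as advertised in the introduction) where the Hermitian structure of $J_{n+1}$ is used in an essential way: without Hermiticity we would not have the operator-norm bound $1/\mathrm{dist}(z,[-M,M])$ on the resolvent.
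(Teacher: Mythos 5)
Your proof is correct. Note that the paper does not prove this lemma at all: it is quoted verbatim from Dette--Reuther~\cite{Dette}, so there is no in-paper argument to compare against. Your route---identifying $P_n(z)P_{n+1}^{-1}(z)A_{n+1}^{-1}$ with the bottom-right $r\times r$ block of the resolvent $(zI-J_{n+1})^{-1}$ via the Schur-complement recursion $G_n(z)=\bigl(zI_r-B_n-A_n^*G_{n-1}(z)A_n\bigr)^{-1}$ with $G_0(z)=(zI_r-B_0)^{-1}$, and then invoking the Hermitian resolvent bound $\|(zI-J_{n+1})^{-1}\|\le 1/\mathrm{dist}(z,[-M,M])$ together with $|\vecv^*M\vecv|\le\|M\|$ for unit $\vecv$---is the natural self-contained argument for this statement and, up to presentation, is what one finds in~\cite{Dette}. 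The bookkeeping you flagged as delicate does close cleanly: in the $2\times 2$ block partition of $J_{n+1}$ with top-left block $J_n$ and bottom-right block $B_n$, the only interface coupling is the single block $A_n$ above the diagonal (and $A_n^*$ below), so the Schur complement of the $(n,n)$ entry is exactly $zI_r-B_n-A_n^*\bigl((zI-J_n)^{-1}\bigr)_{n-1,n-1}A_n$, which matches the identity $R_n^{-1}(z)=zI_r-B_n-A_n^*R_{n-1}(z)A_n$ derived from \eqref{recurrence}; and the eigenvalues of $J_{n+1}$ are by the paper's determinant identity precisely the zeros of $\det P_{n+1}$, which the hypothesis places in $[-M,M]$.
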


We will need the following variant of Lemma~\ref{lemma:DetteReuther}:

\begin{corollary} In the matrix Nevai class \eqref{Nevai:class} we can find
$M>0$ as in Lemma~\ref{lemma:DetteReuther} so that
\begin{equation}\label{polar:identity} |\vecw^* P_{n}(z)P^{-1}_{n+1}(z)\vecv|
\leq \frac{8||A||}{\mathrm{dist}(z,[-M,M])}
\end{equation}
for all $n$ sufficiently large, all column vectors $\vecv,\vecw$ with
$||\vecv||=||\vecw||=1$ and all complex numbers $z$. Here $||A||$ denotes the
operator norm (also known as $2$-norm, or maximal singular value) of the matrix
$A$.
\end{corollary}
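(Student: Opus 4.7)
The plan is to derive \eqref{polar:identity} from Lemma~\ref{lemma:DetteReuther} via two standard tricks: a polarization identity (to pass from a quadratic form in one vector to a sesquilinear form in two vectors) and an absorption of the factor $A_{n+1}^{-1}$ into the right-hand vector at the cost of a bound on $\|A_{n+1}\|$. The Nevai-class assumption $A_n\to A$ enters only through the fact that $\|A_{n+1}\|\le 2\|A\|$ for all $n$ sufficiently large.

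First I would apply Lemma~\ref{lemma:DetteReuther}, which gives
\[
|\vecv^*\, M_n(z)\, \vecv|\ \le\ \frac{1}{\mathrm{dist}(z,[-M,M])},\qquad \|\vecv\|=1,
\]
where $M_n(z):=P_n(z)P_{n+1}^{-1}(z)A_{n+1}^{-1}$. To upgrade this quadratic estimate to an estimate on $|\vecw^* M_n(z)\vecu|$ for two (possibly different) unit vectors, I would invoke the polarization identity
\[
4\,\vecw^* M_n(z)\vecu \;=\; \sum_{k=0}^{3} i^{k}\,(\vecu+i^{k}\vecw)^* M_n(z)(\vecu+i^{k}\vecw).
\]
For each $k$, the vector $\vecu+i^{k}\vecw$ has norm at most $2$, so writing it as (norm)$\times$(unit vector) and pulling out the squared norm yields $|(\vecu+i^{k}\vecw)^* M_n(z)(\vecu+i^{k}\vecw)|\le 4/\mathrm{dist}(z,[-M,M])$. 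Summing the four terms and dividing by $4$ gives the uniform bound $|\vecw^* M_n(z)\vecu|\le 4/\mathrm{dist}(z,[-M,M])$ for all unit vectors $\vecw,\vecu$.

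Second, to remove the spurious factor $A_{n+1}^{-1}$, I would write
\[
\vecw^* P_n(z)P_{n+1}^{-1}(z)\vecv \;=\; \vecw^* M_n(z)(A_{n+1}\vecv),
\]
set $\vecu:=A_{n+1}\vecv/\|A_{n+1}\vecv\|$ (or take $\vecu$ arbitrary if $A_{n+1}\vecv=0$, in which case the left side vanishes), and note $\|A_{n+1}\vecv\|\le\|A_{n+1}\|$. By \eqref{Nevai:class} and the continuity of the operator norm, $\|A_{n+1}\|\le 2\|A\|$ for all $n$ large enough. Thus, combining the polarization bound (factor $4$) with the norm bound (factor $2\|A\|$), I obtain
\[
|\vecw^* P_n(z)P_{n+1}^{-1}(z)\vecv|\ \le\ 2\|A\|\cdot\frac{4}{\mathrm{dist}(z,[-M,M])}\;=\;\frac{8\|A\|}{\mathrm{dist}(z,[-M,M])},
\]
which is precisely \eqref{polar:identity}. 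There is no real obstacle here; the only mild subtlety is choosing $n$ large enough so that $\|A_{n+1}\|\le 2\|A\|$, which the Nevai-class hypothesis grants automatically.
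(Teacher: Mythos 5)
Your proof is correct and follows essentially the same route as the paper's: apply the polarization identity to the sesquilinear form built from $P_n(z)P_{n+1}^{-1}(z)A_{n+1}^{-1}$ to obtain the factor $4$, then absorb $A_{n+1}$ into the right-hand vector and use $\|A_{n+1}\|\le 2\|A\|$ for $n$ large (from the Nevai-class hypothesis) to get the factor $8\|A\|$.
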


\begin{proof}
For fixed $n$ and fixed $z\in\cee$, define the sesquilinear form $$\langle
\vecv,\vecw\rangle_A := \vecw^* P_{n}(z)P^{-1}_{n+1}(z)A^{-1}_{n+1}\vecv,$$
which is linear in its first and antilinear in its second argument. Also define
$||\vecv||_A^2 := \langle\vecv,\vecv\rangle_A$. (This \lq norm\rq\ is not
necessarily positive!) The polar identity asserts that
$$\langle \vecv,
\vecw\rangle_A = \frac 14 \left(||\vecv + \vecw||_A^2 - ||\vecv - \vecw||_A^2 +
i||\vecv + i\vecw||_A^2 - i||\vecv - i\vecw||_A^2\right).$$ Combining this with
Lemma~\ref{lemma:DetteReuther} we get
$$ |\vecw^*
P_{n}(z)P^{-1}_{n+1}(z)A^{-1}_{n+1}\vecv|=:|\langle \vecv,\vecw\rangle_A|  \leq
\frac{4}{\mathrm{dist}(z,[-M,M])}$$ for all pairs of vectors $\vecv,\vecw$ with
$||\vecv||=||\vecw||=1$ and for all complex numbers $z$. If we now take $n$
sufficiently large so that $||A_{n+1}||<2||A||$ (recall \eqref{Nevai:class}),
we get the desired inequality \eqref{polar:identity}.
\end{proof}

\begin{proof}[Proof of Theorem~\ref{theorem:ratioasy}]
We will use a normal family argument. Fix $k\in\{1,\ldots,r\}$ and
$z_0\in\cee\setminus [-M,M]$. By \eqref{polar:identity} we have that
$P_{n}(z)P^{-1}_{n+1}(z)\vecv_k(z)$ is uniformly bounded entrywise in a
neighborhood of $z=\infty$. {By Montel's theorem we can take} a
subsequence of indices $(n_i)_{i=0}^{\infty}$ so that the limit
$\lim_{i\to\infty} P_{n_i}(z)P^{-1}_{n_i+1}(z)\vecv_k(z)$ exists uniformly in
this neighborhood. We will prove by induction on $l=0,1,2,\ldots$ that
\begin{equation}\label{induction:hypothesis}
\lim_{i\to\infty}P_{n_i}(x)P_{n_i+1}^{-1}(x)\vecv_k(x) =
z_k(x)\vecv_k(x)+O(x^{-l}),\qquad x\to\infty.
\end{equation}
For $l=0$ (or $l=1$) this follows from \eqref{polar:identity} and the fact that
$z_k(x)=O(x^{-1})$ as $x\to\infty$.

Now assume that  \eqref{induction:hypothesis} is satisfied for a certain value
of $l$, and for \emph{any} sequence $(n_i)_i$ for which the limit exists.
Fixing such a sequence $(n_i)_i$, we will prove that
\eqref{induction:hypothesis} holds with $l$ replaced by $l+2$. By moving to a
subsequence of $(n_i)_i$ if necessary, we may assume without loss of generality
that the limiting matrices $\lim_{i\to\infty}P_{n_i}(x)P_{n_i+1}^{-1}(x)$ and
$\lim_{i\to\infty}P_{n_i-1}(x)P_{n_i}^{-1}(x)$ both exist. The induction
hypothesis asserts that \eqref{induction:hypothesis} holds for \emph{any}
sequence $(n_i)$ for which the limit exists, so in particular
\begin{equation}\label{induction:hypothesis:shifted}
\lim_{i\to\infty}P_{n_i-1}(x)P_{n_i}^{-1}(x)\vecv_k(x) =
z_k(x)\vecv_k(x)+O(x^{-l}),\qquad x\to\infty.
\end{equation}

Now let us write the three-term recurrence \eqref{recurrence} in the form
$$ A_{n_i}^{*}P_{n_i-1}(x)P_{n_i}^{-1}(x) + (B_{n_i}-xI_r)+A_{n_i+1}
P_{n_i+1}(x)P_{n_i}^{-1}(x)=0.
$$
Multiplying on the right with $\vecv_k(x)$, taking the limit $i\to\infty$ and
using the facts that $\lim_n A_n=A$ and $\lim_n B_n=B$ we get
$$ A^{*}\lim_{i\to\infty}
\left(P_{n_i-1}(x)P_{n_i}^{-1}(x)\right)\vecv_k(x) +
(B-xI_r)\vecv_k(x)+A\lim_{i\to\infty}
\left(P_{n_i+1}(x)P_{n_i}^{-1}(x)\right)\vecv_k(x)=0.
$$
With the help of \eqref{induction:hypothesis:shifted} and \eqref{vector:v} this
implies
$$ A\lim_{i\to\infty}\left(P_{n_i+1}(x)P_{n_i}^{-1}(x)\right)\vecv_k(x) =
A z_k^{-1}(x)\vecv_k(x) + O(x^{-l}),\qquad x\to\infty.
$$
Multiplying this relation on the left with
$z_k(x)\lim_{n\to\infty}\left(P_{n_i}(x)P_{n_i+1}^{-1}(x)\right)A^{-1}$ and
using \eqref{polar:identity} and the fact that $z_k(x)=O(x^{-1})$ for
$x\to\infty$, we find
$$ \lim_{i\to\infty}P_{n_i}(x)P_{n_i+1}^{-1}(x)\vecv_k(x) =
z_k(x)\vecv_k(x)+O(x^{-l-2}),\qquad x\to\infty,
$$
showing that the induction hypothesis \eqref{induction:hypothesis} holds with
$l$ replaced with $l+2$. This proves the induction step.
\end{proof}

\subsection{Proof of Theorem~\ref{theorem:zerodistr}}
\label{subsection:proof:zero}

We write the telescoping product
$$ \det P_n(x) = \left(\det P_n(x)P_{n-1}^{-1}(x)\right)\ldots \left(\det P_2(x)P_{1}^{-1}(x)\right)\left(\det
P_1(x)P_{0}^{-1}(x) \right)\det P_0(x).$$ Taking logarithms and dividing by
$rn$ we get
$$ \frac{1}{rn} \log\det P_n(x) = \frac{1}{rn}\left(\left(\sum_{k=1}^{n} \log\det
P_k(x)P_{k-1}^{-1}(x)\right)+\log\det P_0(x)\right).
$$
(Here we use the logarithm as a complex multi-valued function.) Taking the
limit $n\to\infty$ and using the ratio asymptotics in
Corollary~\ref{cor:ratiodet}, we obtain
$$ -\lim_{n\to\infty}
\frac{1}{rn} \log(\det P_n(x)) = \frac 1r\log (z_1(x)\ldots z_r(x)),\qquad
x\in\cee\setminus [-M,M].
$$
Now we take the real part of both sides of this equation. Then the left hand
side becomes precisely the logarithmic potential of $\mu_0$, up to an additive
constant $C$. So we obtain \eqref{limiting:measure:mu0}; the constant $C$ can
be determined by calculating the asymptotics for $x\to\infty$. $\bol$

\subsection{Proof of Proposition~\ref{prop:Gamma0:bis}}
\label{subsection:proof:Gamma0bis}

%\begin{proof}[Proof of Proposition~\ref{prop:Gamma0:bis}]
From Proposition~\ref{prop:Gamma0} and its proof we know that both the left and
right hand sides of the equation in \eqref{Gamma0:unitnorm} are subsets of the
real axis. Now for $x\in\er$ one has the symmetry relation \begin{multline}
\overline{f(z,x)} := \overline{\det(A^*z+B+Az^{-1}-x I_r)} = \det(A \bar
z+B+A^*\bar z^{-1}-x I_r) =  f(\bar{z}^{-1},x),
\end{multline}
where the bar denotes the complex conjugation and we used that $\overline{\det
M}=\det(M^*)$ for any square matrix $M$. This implies that for each solution
$z=z_k(x)$ of the equation $f(z,x)=0$, the complex conjugated inverse $z=\bar
z_k^{-1}(x)$ is a solution as well, with the same multiplicity. So with the
ordering \eqref{rootszk:ordering} we have that $|z_k(x)|\cdot |z_{2r-k}(x)|=1$
for any $k=1,\ldots,r$. In particular we have that $|z_r(x)|=|z_{r+1}(x)|$ if
and only if $|z_r(x)|=1$. This implies Proposition~\ref{prop:Gamma0:bis}.
$\bol$ %\end{proof}

\subsection{Proof of Proposition~\ref{prop:mu0:bis}}
\label{subsection:proof:mu0bis}

%\begin{proof}[Proof of Proposition~\ref{prop:mu0:bis}]
{We use the notations in Section~\ref{subsection:mu0alternative}. We fix $x\in
\mathcal I\subset\Gamma_0$ and define the sets}
\begin{align}\label{Splus}
S_+(x) = \{ z_{k}(x)\mid |z_k(y)|<1\textrm{ for all }y\in \Omega\cap\cee_+\},\\
\label{Sminus} S_-(x) = \{ z_{k}(x)\mid |z_k(y)|<1\textrm{ for all }y\in
\Omega\cap\cee_-\}.
\end{align}
{So $S_+(x)$ (or $S_-(x)$) contains all the roots $z_k(x)$ for which
$|z_k(y)|<1$ for $y$ in the upper half plane (or lower half plane respectively)
close to $x$.}

Let $z_k(x)$ be a root of modulus strictly less than $1$. By continuity this
root belongs to both sets $S_+(x)$ and $S_-(x)$,
with the same multiplicity, %\textbf{using \eqref{rootszk:ordering}},
and hence the contributions from the $+$- and $-$-terms in \eqref{measure:mu0}
corresponding to this root $z_k(x)$ cancel out.
%Here we write $\cee_+$ for the upper half plane.

Next let $z_k(x)$ be a root of modulus $1$. Assume again that
$z_k(y)=e^{i\theta_k(y)}$ with $\theta_k(y)$ {real and differentiable} for
$y\in \mathcal I\subset\er$. Suppose that $\theta_k'(x)>0$. Then the
Cauchy-Riemann equations applied to $\log z_k(y)$ imply that $|z_k(y)|<1$ for
$y$ in the upper half plane close to $x$, and $|z_k(y)|>1$ for $y$ in the lower
half plane close to $x$. So $z_k(x)$ lies in the set $S_+(x)$ in \eqref{Splus}
but not in $S_-(x)$. Similarly if $\theta_k'(x)<0$ then $z_k(x)$ lies in the
set $S_-(x)$ but not in $S_+(x)$. In both cases, the contribution from $z_k(x)$
in the right hand side of \eqref{measure:mu0} has a positive sign and so we
obtain the desired equality \eqref{measure:mu0:bis}.

Finally, the claim that $\theta'(x)\neq 0$ for any $x\in \mathcal
I\subset\Gamma_0$ follows since, if this fails, then general considerations
(e.g.\ in \cite[Proof of Theorem 11.1.1(v)]{Simon2}) would imply that
$\Gamma_0\not\subset\er$, which is a contradiction.
$\bol$ %\end{proof}

\subsection{Proof of Theorem~\ref{theorem:ratioasy:N}}
\label{subsection:proof:N}

The proof of Theorem~\ref{theorem:ratioasy}  and \ref{theorem:zerodistr} can be easily extended to prove
Theorem~\ref{theorem:ratioasy:N}. The difference is that the limits
$\lim_{n_i\to\infty}$ should be replaced by local limits of the form
$\lim_{n_i/N\to s}$. The details are straightforward and left to the reader
(for similar reasonings see also \cite{BDK,CCV,Dette,Roman,KVA}, among others.)

\subsection{Proof of Theorem~\ref{theorem:ratioasy:p}}
\label{subsection:proof:p}

%To be constructed (\ldots)

%\begin{comment}

The proof of Theorem~\ref{theorem:ratioasy:p} will follow the same scheme as
the proof of Theorem~\ref{theorem:ratioasy}, but it will be more complicated
due to the higher periodicity. To deal with the periodicity we will use some
ideas from \cite{BDK}. It is convenient to substitute $z=y^{p}$ and work with
the transformed matrix
\begin{eqnarray} \label{Gyx} G(y,x) &:=&
\diag(1,y,\ldots,y^{p-1})F(y^p,x)\diag(1,y^{-1},\ldots,y^{-(p-1)})\\
\label{Gyx:bis}&=& \begin{pmatrix} B^{(0)}-xI_r & y^{-1} A^{(1)} & 0 & 0 & y A^{(0)*} \\
y A^{(1)*} & B^{(1)}-xI_r & y^{-1} A^{(2)} & 0 & 0 \\
0 & y A^{(2)*} & \ddots & \ddots & 0 \\
0 & 0 & \ddots & \ddots & y^{-1} A^{(p-1)}\\
y^{-1}A^{(0)} & 0 & 0 & y A^{(p-1)*} & B^{(p-1)}-xI_r
\end{pmatrix}_{pr\times pr}.
\end{eqnarray}
Consistently with the substitution $z=y^p$, we put $y_k(x)=z_k^{1/p}(x)$,
$k=1,\ldots,2r$, for an arbitrary but fixed choice of the $p$th root. The
ordering \eqref{rootszk:ordering} implies that
\begin{equation}\label{rootsyk:ordering}
0<|y_1(x)|\leq |y_2(x)|\leq\ldots\leq |y_r(x)|\leq |y_{r+1}(x)|\leq\ldots\leq
|y_{2r}(x)|.
\end{equation}
Note that each $y=y_k(x)$ is a root of the algebraic equation
$$\det
G(y,x)\equiv\det F(y^p,x)=0.
$$
 From \eqref{Gyx:bis} it is then easy to check
that (see e.g.~\cite{Delvaux})
\begin{equation}\label{rootsyk:asy} y_k(x)\propto \left\{
\begin{array}{lll} x^{-1},& k=1,\ldots,r,& x\to\infty, \\
x,& k=r+1,\ldots,2r,& x\to\infty,\end{array}\right. \end{equation} where the
$\propto$ symbol means that the ratio of the left and right hand sides is
bounded both from below and above in absolute value when $x\to\infty$.

Denote with $\vecw_k(x)$ a normalized null space vector such that
\begin{equation}\label{vector:w}
G(y_k(x),x)\vecw_k(x) = \mathbf{0}.
\end{equation}
If there are roots $y_k(x)$ with higher multiplicities then we pick the vectors
$\vecw_k(x)$ as explained in Section~\ref{subsection:ratioasy}. We again
partition $\vecw_k(x)$ in blocks as
\begin{equation}\label{nullspacevector:bis} \vecw_{k}(x) =
\begin{pmatrix}
\vecw_{k,0}(x)\\ \vdots \\ \vecw_{k,p-1}(x)
\end{pmatrix},
\end{equation}
where each $\vecw_{k,j}(x)$, $j=0,1,\ldots,p-1$, is a column vector of length
$r$. Assuming the normalization $||\vecw_{k}(x)||=1$ then we have that
%the $\vecw_{k,j}(x)$ all have a bounded and non-zero norm for $x\to\infty$:
\begin{equation}\label{wk:boundednorm} \lim_{x\to\infty}
||\vecw_{k,j}(x)|| = C_{k,j}>0,\qquad j=0,1,\ldots,p-1.
\end{equation}
This follows from \eqref{vector:w}--\eqref{nullspacevector:bis},
\eqref{rootsyk:asy} and by inspecting the dominant terms for $x\to\infty$ in
the matrix \eqref{Gyx:bis}.

Theorem~\ref{theorem:ratioasy:p} will be a consequence of the following
stronger statement:
\begin{equation}\label{ratio:asy:p:strong}
\left(\lim_{n\to\infty}P_{pn+j}(x)P_{pn+j+1}^{-1}(x)\right)\vecw_{k,j+1}(x) =
y_k(x)\vecw_{k,j}(x),\quad x\to\infty,
\end{equation}
uniformly for $x$ in compact subsets of $\cee\setminus ([-M,M]\cup S)$, for all
$k\in\{1,\ldots,r\}$ and for all residue classes $j\in\{0,1,\ldots,p-1\}$
modulo $p$. (We identify $\vecw_{k,p}(x)\equiv \vecw_{k,0}(x)$.) Indeed,
Theorem~\ref{theorem:ratioasy:p} immediately follows by iterating
\eqref{ratio:asy:p:strong} $p$ times and using that $y_k^p(x)=z_k(x)$.
\smallskip

The rest of the proof is devoted to establishing \eqref{ratio:asy:p:strong}. We
will show by induction on $l\geq 0$ that
\begin{equation}\label{induction:hypothesis:p}
\left(\lim_{i\to\infty}P_{pn_i+j}(x)P_{pn_i+j+1}^{-1}(x)\right)\vecw_{k,j+1}(x)
= y_k(x)\vecw_{k,j}(x)(1+O(x^{-l})),\quad x\to\infty,
\end{equation}
for any $k\in\{1,\ldots,r\}$ and $j\in\{0,1,\ldots,p-1\}$, and for any
increasing sequence $\left(n_i\right)_{i=0}^{\infty}$ for which the limit in
the left hand side exists.

Assume that the induction hypothesis \eqref{induction:hypothesis:p} holds for a
certain value of $l\geq 0$. We will show that it also holds for $l+2$. Let
$\left(n_i\right)_{i=0}^{\infty}$ be an increasing sequence for which the limit
in the left hand side of \eqref{induction:hypothesis:p} exists. We can assume
without loss of generality that $j=p-1$; a similar argument will work for the
other values of $j\in\{0,1,\ldots,p-1\}$. Now from the three-term recursion we
obtain
\begin{equation}
\begin{pmatrix}
A_{pn_i}^* & B_{pn_i}-xI_r & A_{pn_i+1} \\
& A_{pn_i+1}^* & \ddots & \ddots  \\
& & \ddots & \ddots & A_{pn_i+p-1}\\
& & & A_{pn_i+p-1}^* & B_{pn_i+p-1}-xI_r & A_{pn_i+p}
\end{pmatrix}
\begin{pmatrix}
P_{pn_i-1}(x)\\ P_{pn_i}(x) \\ \vdots \\ P_{pn_i+p-1}(x)\\ P_{pn_i+p}(x)
\end{pmatrix}=0.
\end{equation}
%\begin{equation}
%\begin{pmatrix}
%A_{pn_i}^* & B_{pn_i}-xI_r & A_{pn_i+1} \\
%& A_{pn_i+1}^* & B_{pn_i+1}-xI_r & A_{pn_i+2} \\
%& & A_{pn_i+2}^* & \ddots & \ddots \\
%& & & \ddots & \ddots & A_{pn_i+p-1}\\
%& & & & A_{pn_i+p-1}^* & B_{pn_i+p-1}-xI_r & A_{pn_i+p}
%\end{pmatrix}
%\begin{pmatrix}
%P_{pn_i-1}(x)\\ P_{pn_i}(x) \\ \vdots \\ P_{pn_i+p-1}(x)\\ P_{pn_i+p}(x)
%\end{pmatrix}=0.
%\end{equation}
Applying a diagonal multiplication with appropriate powers of $y:=y_k(x)$ we
get
\begin{equation}\label{aux:product}
\begin{pmatrix}
yA_{pn_i}^* & B_{pn_i}-xI_r & y^{-1}A_{pn_i+1} \\
& yA_{pn_i+1}^* & \ddots & \ddots  \\
& & \ddots & \ddots & y^{-1}A_{pn_i+p-1}\\
& & & yA_{pn_i+p-1}^* & B_{pn_i+p-1}-xI_r & y^{-1}A_{pn_i+p}
\end{pmatrix}
\begin{pmatrix}
y^{-p}P_{pn_i-1}(x)\\ y^{-(p-1)}P_{pn_i}(x) \\ \vdots \\ y^{-1}P_{pn_i+p-2}(x)\\ P_{pn_i+p-1}(x)\\
yP_{pn_i+p}(x)
\end{pmatrix}=0.
\end{equation}
Let us focus on the rightmost matrix in the left hand side of
\eqref{aux:product}. Multiplying on the right with
$P_{pn_i+p-1}^{-1}(x)\vecw_{k,p-1}(x)$ it becomes
\begin{equation}\label{aux:vector}
\begin{pmatrix} y^{-p}P_{pn_i-1}P_{pn_i+p-1}^{-1}\vecw_{k,p-1} \\
%y^{-(p-1)}P_{pn_i}P_{pn_i+p-1}^{-1}\vecw_{k,p-1} \\
\vdots \\
y^{-1}P_{pn_i+p-2}P_{pn_i+p-1}^{-1}\vecw_{k,p-1} \\ \vecw_{k,p-1} \\
yP_{pn_i+p}P_{pn_i+p-1}^{-1}\vecw_{k,p-1}
\end{pmatrix}.
\end{equation}
(Here we skip the $x$-dependence for notational simplicity.) By moving to a
subsequence of $(n_i)_{i=0}^{\infty}$ if necessary and using compactness, we
may assume that each block of \eqref{aux:vector} has a limit for $i\to\infty$.
Repeated application of the induction hypothesis \eqref{induction:hypothesis:p}
then implies that the limit of \eqref{aux:vector} for $i\to\infty$ behaves as
$$ \begin{pmatrix}
\vecw_{k,p-1}(1+O(x^{-l}))\\
\vecw_{k,0}(1+O(x^{-l}))\\
\vdots \\
\vecw_{k,p-2}(1+O(x^{-l}))\\
\vecw_{k,p-1}\\
\varphi(x)
\end{pmatrix},
$$
for $x\to\infty$, where
\begin{equation}\label{phi:def}
\varphi(x):=\left(\lim_{i\to\infty}P_{pn_i+p}(x)P_{pn_i+p-1}^{-1}(x)\right)y_k(x)\vecw_{k,p-1}(x).\end{equation}

Multiplying \eqref{aux:product} on the right with
$P_{pn_i+p-1}^{-1}(x)\vecw_{k,p-1}(x)$ and taking the limit $i\to\infty$, we
get from the above observations that
\begin{equation}\label{cancelation:nullspace}
\begin{pmatrix}
yA^{(0)*} & B^{(0)}-xI_r & y^{-1}A^{(1)} \\
& yA^{(1)*} & \ddots & \ddots  \\
& & \ddots & \ddots & y^{-1}A^{(p-1)}\\
& & & yA^{(p-1)*} & B^{(p-1)}-xI_r & y^{-1}A^{(0)}
\end{pmatrix}
\begin{pmatrix}
\vecw_{k,p-1}\\
\vecw_{k,0}\\
\vdots \\
\vecw_{k,p-2}\\
\vecw_{k,p-1}\\
\varphi(x)
\end{pmatrix}=\begin{pmatrix}O(x^{-l+1})\\ \vdots \\ O(x^{-l+1}) \\ O(x^{-l-1}) \end{pmatrix},
\end{equation}
for $x\to\infty$, where we used that $y\equiv y_k(x)\propto x^{-1}$ for
$x\to\infty$. Taking the last block row of this equation yields
\begin{equation}\label{lastblockrow}
y_k(x)A^{(p-1)*}\vecw_{k,p-2} + (B^{(p-1)}-xI_r)\vecw_{k,p-1}+
y_k^{-1}(x)A^{(0)}\varphi(x)=O(x^{-l-1}).
\end{equation}
%\begin{equation}\label{lastblockrow}
%\begin{pmatrix}0 & \ldots & 0 & y_kA_{p-1}^* & B_{p-1}-xI_r & y_k^{-1}A_{0}
%\end{pmatrix}\begin{pmatrix}
%\vecw_{k,p-1}\\
%\vecw_{k,0}\\
%\vdots \\
%\vecw_{k,p-2}\\
%\vecw_{k,p-1}\\
%\varphi(x)
%\end{pmatrix} = O(x^{-l-1}).
%\end{equation}
On the other hand, by \eqref{vector:w}, \eqref{nullspacevector:bis} and
\eqref{Gyx:bis} (evaluated for the last block row) we have that
$$  y_k^{-1}(x)A^{(0)}\vecw_{k,0} + y_k(x)A^{(p-1)*}\vecw_{k,p-2} +
(B^{(p-1)}-xI_r)\vecw_{k,p-1} = 0.
$$
%$$ \begin{pmatrix}y_k^{-1}A_{0} & 0 & \ldots & 0 & y_kA_{p-1}^* & B_{p-1}-xI_r
%\end{pmatrix}\begin{pmatrix}
%%\vecw_{k,p-1}\\
%\vecw_{k,0}\\
%\vdots \\
%\vecw_{k,p-2}\\
%\vecw_{k,p-1}
%%\varphi(x)
%\end{pmatrix} = 0.
%$$
Subtracting this from \eqref{lastblockrow} we get
$$
A^{(0)}\left(\lim_{i\to\infty}P_{pn_i+p}(x)P_{pn_i+p-1}^{-1}(x)\right)\vecw_{k,p-1}(x)-y_k^{-1}(x)A^{(0)}\vecw_{k,0}(x)=O(x^{-l-1}),
$$
on account of \eqref{phi:def}. The factor $A^{(0)}$ can be skipped from this
equation. Then multiplying on the left with
$y_k(x)\times\left(\lim_{i\to\infty}P_{pn_i+p-1}(x)P_{pn_i+p}^{-1}(x)\right)$
we get
$$
\left(\lim_{i\to\infty}P_{pn_i+p-1}(x)P_{pn_i+p}^{-1}(x)\right)\vecw_{k,0}(x)-y_k(x)\vecw_{k,p-1}(x)=O(x^{-l-3}),
$$
or equivalently
$$ \left(\lim_{i\to\infty}P_{pn_i+p-1}(x)P_{pn_i+p}^{-1}(x)\right)\vecw_{k,0}(x) = y_k(x)\vecw_{k,p-1}(x)(1+O(x^{-l-2})).
$$
We conclude that \eqref{induction:hypothesis:p} holds with $l$ replaced by
$l+2$. This proves the induction step. $\bol$
%\end{comment}

\subsection{Proof of Proposition~\ref{prop:Cheb12bis}}
\label{subsection:proof:cheb}

{Throughout the proof we will use the notations of
Section~\ref{section:Cheb}.
%conventions \eqref{assumption:Herm1}--\eqref{assumption:Herm3}.
Recall that the Hermitian symmetry $A=A^*$ implies} the roots $z_k(x)$ to
appear in pairs $\{z_k(x),z_k(x)^{-1}\}$. Both $z_k(x)$ and $z_k(x)^{-1}$
correspond to the same value of $w_k(x)=z_k(x)+z_k(x)^{-1}$ in
\eqref{assumption:Herm2} and therefore to the same null space vector
$\vecv_k(x)$.

Now let $x\in\er$.
%To any root $w_k(x)\in\cee\setminus [-2,2]$ there
%corresponds a pair of roots $\{z_k(x),1/z_k(x)\}$ with $|z_k(x)|<1$. By
%continuity we have $|z_k(y)|<1$ for all complex $y$ close enough to $x$. On the other hand,
For any $w_k(x)=2\cos\theta_k(x)\in(-2,2)$, with $\theta\in (0,\pi)$,
$k=1,\ldots,r$, we have a pair of roots $z_{k_1}(x)=e^{i\theta_k(x)}$ and
$z_{k_2}(x)=e^{-i\theta_k(x)}$, with $\{k_1,k_2\}=\{k,2r-k\}$. Suppose that
$w_k'(x)>0$. Then the Cauchy-Riemann equations show that $z_{k_1}(x)$ lies in
the set $S_-(x)$ in \eqref{Sminus} but not in $S_+(x)$, and vice versa for the
root $z_{k_2}(x)$. The reverse situation occurs if $w_k'(x)<0$.

Fix $x\in\er$ and assume the labeling of roots is such that
$$ \max\{|z_1(x)|,\ldots,|z_{K}(x)|\}<1,\qquad |z_{K+1}(x)|=\ldots =
|z_{r}(x)|=1,
$$
with $K\in\{0,\ldots,r\}$. Taking into account the above observations, we find
from \eqref{Cheb2:cauch1} that
\begin{eqnarray*} && \lim_{\ep\to 0+} F_W(x+\epsilon
i)\\ &=&V(x)\left(\lim_{\ep\to 0+} D(x+\ep i)\right)V^{-1}(x)A^{-1} \\ &=&
V(x)\diag(z_1(x),\ldots,z_K(x),e^{-i\theta_{K+1}(x)\mathrm{sign}\,w_{K+1}'(x)},\ldots,e^{-i\theta_{r}(x)\mathrm{sign}\,w_{r}'(x)})V^{-1}(x)A^{-1}.
\end{eqnarray*}
Similarly
\begin{eqnarray*} && \lim_{\ep\to 0+} F_W(x-\epsilon
i)\\ &=&V(x)\left(\lim_{\ep\to 0+} D(x-\ep i)\right)V^{-1}(x)A^{-1} \\ &=&
V(x)\diag(z_1(x),\ldots,z_K(x),e^{i\theta_{K+1}(x)\mathrm{sign}\,w_{K+1}'(x)},\ldots,e^{i\theta_{r}(x)\mathrm{sign}\,w_{r}'(x)})V^{-1}(x)A^{-1}.
\end{eqnarray*}
Using the Stieltjes inversion principle
$$ \frac{\ud W(x)}{\ud x} = \frac{1}{2\pi i}\lim_{\epsilon\to 0+} \left(F_W(x-\epsilon
i)-F_W(x+\epsilon i)\right),
$$
the desired formula for $\ud W/\ud x$ now follows from a straightforward
calculation. The formula for $\ud X/\ud x$ similarly follows from
\eqref{Cheb1:cauch2}, taking into account the simplifications due to $A=A^*$.
$\bol$

\bigskip

\noindent \textbf{Acknowledgements} The work of the authors is supported by the
SFB TR12 ''Symmetries and Universality in Mesoscopic Systems'', Teilprojekt C2.
The first author is a Postdoctoral Fellow of the Fund for Scientific Research -
Flanders (Belgium). His work is supported in part by the Belgian
Interuniversity Attraction Pole P06/02. The authors would also like to thank
Martina Stein, who typed parts of this manuscript with considerable technical
expertise.

\bigskip


\begin{thebibliography}{99}

\bibitem{AptLopRoc}
    A.I. Aptekarev, G. L\'{o}pez Lagomasino, and I.A. Rocha, Ratio asymptotic of
    Hermite-Pad\'{e} orthogonal polynomials for Nikishin systems, Math. Sb. 196 (2005), 1089--1107.
\bibitem{AN}
    A. Aptekarev and E. Nikishin,
    The scattering problem for a discrete Sturm-Liouville operator,
    Mat. Sb. 121 (1983), 327–-358.
\bibitem{BDK}
    M. Bender, S. Delvaux and A.B.J. Kuijlaars,
    Multiple Meixner-Pollaczek polynomials and the six-vertex
    model, J. Approx. Theory 163 (2011), 1606--1637.
\bibitem{BCD}
    J. Borrego, M. Castro and A. Dur\'an,
    Orthogonal matrix polynomials satisfying differential equations with recurrence coefficients having non-scalar limits,
    Integral Transforms Spec. Funct. 23 (2012), 685--700.
\bibitem{BS2}
    A. B\"ottcher and B. Silbermann,
    Introduction to Large Truncated Toeplitz Matrices,
    Universitext, Springer-Verlag, New York, 1998.
\bibitem{CGMV}
    M.J. Cantero, F.A. Gr\"unbaum, L. Moral and L. Velazquez,
    Matrix valued Szeg\H o polynomials and quantum walks,
    Comm. Pure Appl. Math. 63 (2010), 464--507.
\bibitem{CCV}
    E. Coussement, J. Coussement and W. Van Assche,
    Asymptotic zero distribution for a class of multiple orthogonal polynomials,
    Trans. Amer. Math. Soc. 360 (2008), 5571--5588.
\bibitem{DPS}
    D. Damanik, A. Pushnitski and B. Simon,
    The analytic theory of matrix orthogonal polynomials,
    Surv. Approx. Theory 4 (2008), 1--85.
\bibitem{DKS}
    D. Damanik, R. Killip and B. Simon,
    Perturbations of orthogonal polynomials with periodic recursion
    coefficients, Ann. of Math. 171 (2010), 1931--2010.
\bibitem{Delvaux}
    S. Delvaux, Equilibrium problem for the eigenvalues of banded block Toeplitz
    matrices, Math. Nachr. 285 (2012), 1935–1962.
\bibitem{Dette}
    H. Dette and B. Reuther,
    Random Block Matrices and Matrix Orthogonal Polynomials,
    J. Theor. Probab. (2008), DOI 10.1007/s10959-008-0189-z.
\bibitem{DumEdel}
     I.\ Dumitriu and A.\ Edelman, Matrix models for beta ensembles,
     J.\ Math.\ Phys. 43 (2002), 5830-5847.
\bibitem{Duran}
    A. Dur\'an,
    Ratio asymptotics for Orthogonal Matrix Polynomials,
    J. Approx. Theory 100 (1999), 304--344.
%\bibitem{DG}
%    A. Dur\'an and F.A. Gr\"unbaum,
%    A survey on orthogonal matrix polynomials satisfying second order differential equations,
%    J. Comp. Appl. Math. 178 (2005) 169–-190.
\bibitem{DL}
    A. Dur\'an and P. L\'opez-Rodriguez,
    Orthogonal matrix polynomials: Zeros and Blumenthal's theorem,
    J. Approx. Theory 84 (1996), 96--118.
\bibitem{DLS}
    A. Dur\'an, P. L\'opez-Rodriguez and E. Saff,
    Zero asymptotic behaviour for orthogonal matrix polynomials,
    J. Anal. Math. 78 (1999), 37--60.
\bibitem{Dys}   F.J.\ Dyson, The threefold way. Algebraic structure of symmetry groups and ensembles in
quantum mechanics. J.\ Math.\ Phys. 3 (1962),  1199-1215.
\bibitem{GIM}
    F.A. Gr\"unbaum, M.D. de la Iglesia and A. Mart\'inez-Finkelshtein,
    Properties of matrix orthogonal polynomials via their Riemann-Hilbert
    characterization, SIGMA 7 (2011), 098.
    %arXiv:1106.1307.
\bibitem{Koz}
    R. Kozhan,
    Equivalence classes of block Jacobi matrices,
    Proc. Amer. Math. Soc. 139 (2011), 799--805.
\bibitem{Krein}
    M.G. Krein,
    Infinite J-matrices and a matrix moment problem,
    Dokl. Akad. Nauk SSSR 69 (1949), 125–-128 (in Russian).
\bibitem{Roman}
    A.B.J. Kuijlaars and P. Rom\'an,
    Recurrence relations and vector equilibrium problems arising from a model of non-intersecting squared Bessel paths,
    J. Approx. Theory 162 (2010), 2048--2077.
\bibitem{KSS}
    A.B.J. Kuijlaars and S. Serra-Capizzano,
    Asymptotic zero distribution of orthogonal polynomials with discontinuously
    varying recurrence coefficients,
    J. Approx. Theory 113 (2001), 142-155.
\bibitem{KVA}
    A.B.J. Kuijlaars and W. Van Assche,
    The asymptotic zero distribution of orthogonal polynomials with varying recurrence coefficients,
    J. Approx. Theory 99 (1999), 167--197.
 \bibitem{Mehta}
    M.L.\ Mehta,  Random Matrices (1967), Academic Press, New York.
\bibitem{MN}
    A. Mat\'e and P. Nevai,
    A generalization of Poincar\'e's theorem for recurrence equations,
    J. Approx. Theory 63 (1990), 92--97.
\bibitem{Rak}
    E.A. Rakhmanov,
    On the asymptotics of the ratio of orthogonal polynomials,
    Math. USSR Sbornik 32 (1977), 199--213.
\bibitem{SaffTotik}
    E.B. Saff and V. Totik,
    Logarithmic Potentials with External Field,
    Springer-Verlag, Berlin, 1997.
\bibitem{Simon2}
    B. Simon,
    Orthogonal Polynomials on the Unit Circle. Part 2: Spectral
    Theory,
    Amer. Math. Soc. Coll. Publ. Vol. 54, Amer. Math. Soc. Providence, R.I. 2005.
\bibitem{Simon10}
     B. Simon, Szeg\H o's Theorem and its Descendants, Princeton university press, 2010.
\bibitem{Widom1}
    H. Widom,
    Asymptotic behavior of block Toeplitz matrices and determinants,
    Advances in Math. 13 (1974), 284--322.
\bibitem{YM}
    H.O. Yakhlef and F. Marcell\'an,
    Orthogonal matrix polynomials, connection between recurrences on the unit circle and on a finite interval,
    in: Approximation, Optimization and Mathematical Economics (Pointe-a-Pitre, 1999), pp. 369–-382, Physica, Heidelberg, 2001.
\bibitem{Zyg}
    M. Zygmunt,
    Matrix Chebyshev polynomials and continued fractions,
    Lin. Alg. Appl. 340 (2002), 150--168.
\end{thebibliography}
\end{document}